\theoremstyle{plain} 
\newtheorem{thm}{Theorem}
\newtheorem{cor}[thm]{Corollary} 
\newtheorem{lem}[thm]{Lemma} 
\newtheorem{prop}[thm]{Proposition}
\newtheorem{rem}[thm]{Remark}
\theoremstyle{definition} 
\newtheorem{defn}{Definition}
\newtheorem{ex}{Example}
\title{Pseudo-Legendrian and Legendrian Simplicity of Links in 3-Manifolds}
\author{Patricia Cahn, Rima Chatterjee, Vladimir Chernov}
\begin{document}
\maketitle

\begin{abstract}
 We construct infinite families of non-simple isotopy classes of links in overtwisted contact structures on $S^1$-bundles over surfaces.
 These examples include: (1) a pair of Legendrian links that are not Legendrian isotopic, but which are isotopic as framed links, homotopic as Legendrian immersed multi-curves, and have Legendrian-isotopic components and (2) a pair of Legendrian links that are not Legendrian isotopic, but are isotopic as framed links, homotopic as Legendrian immersed multi-curves, and which are link-homotopic as Legendrian links. Moreover, we construct examples showing that both of these non-simplicity phenomena can occur in the same smooth isotopy class. To construct these examples, we develop the theory of links transverse to a nowhere-zero vector field in a 3-manifold, and construct analogous examples in the category of links transverse to a vector field.
\end{abstract}

\section{Introduction}

There has been much recent work on classification problems for Legendrian links, with many exciting new questions laid out in the work of Dalton-Etnyre-Traynor \cite{dalton2021legendrian}.  The second author and Etnyre--Min--Rodewald showed the existence of stabilized Legendrian links which are topologically isotopic and  component-wise Legendrian isotopic, but not Legendrian isotopic as links \cite{chatterjee2025cable}. The first example with non-stabilized components was observed in \cite{jordan2006generating}, where the authors used generating functions to distinguish such links. Note that all of these results are in tight contact manifolds.  This paper gives similar phenomena in general overtwisted manifolds for links whose components are loose. This is surprising as overtwisted manifolds and loose knots and links are known to be more flexible than tight manifolds; on the other hand, this flexibility allows us to construct examples exhibiting non-simplicity phenomena that have not yet been studied in tight manifolds. The foundational works of Eliashberg~\cite{eliashberg1989classification,eliashberg1992contact} introduced tight and overtwisted contact manifolds and first studied the flexibility phenomenon for overtwisted contact structures.

In $\mathbb{R}^3$ with the standard contact structure, an isotopy class of knots or links is {\it Legendrian simple} if any two Legendrian representatives of that class with the same (component-wise) Thurston-Bennequin and rotation numbers are Legendrian isotopic. The question of which isotopy classes are Legendrian simple is more complex when one considers arbitrary contact 3-manifolds $(M,\xi)$ with a co-oriented contact structure.  First, the Thurston-Bennequin numbers of the components are only well-defined when the components are null-homologous, and the rotation numbers are only well-defined when the components are null-homologous or when the contact structure is parallelizable.  One can instead formulate the definition of a simple isotopy class using two {\it generalized classical invariants}: the framed isotopy class replaces the Thurston-Bennequin number, and the homotopy class in the category of Legendrian immersions replaces the rotation number. We then define an isotopy class of framed knots or framed links to be {\it Legendrian simple} if any two Legendrian representatives of that class that are homotopic through Legendrian multi-curves are also isotopic as Legendran links. Analogously, we say a framed isotopy class of knots/links is {\it non-simple} if there are Legendrian representatives of the class that are homotopic as Legendrian multi-curves but are not Legendrian isotopic as knots/links.  When considering links, rather than knots, the notion of simplicity has various refinements, and exploring these notions of simplicity is the main goal of this paper.

The study of Legendrian-simple and non-simple isotopy classes of knots (rather than links), in terms of generalized classical invariants, was carried out by the first and third author in \cite{CC}.  In particular, the authors constructed infinite families of Legendrian-simple and non-simple isotopy classes of knots in overtwisted contact structures $S^1$-bundles over surfaces, and gave sufficient conditions for an isotopy class to be Legendrian simple in an arbitrary co-oriented contact manifold $(M,\xi).$
 The examples in \cite{CC} are constructed using homotopy-theoretic data from the space of loops in $M$, and the homotopy class of $\xi$; this homotopy-theoretic answer is more subtle than one might expect.  The goal of this article is to establish similar machinery for links.  While theorems about the conditions under which classes {\it are} simple generalize straightforwardly from the knot setting to links, the construction of various types of {\it non-simple} classes is richer in the setting of links.

To establish the above machinery, we consider a more general problem.  Let $(M,V)$ denote a compact, oriented smooth 3-manifold $M$ equipped with a nowhere-vanishing smooth vector field $V$.  When $M$ is equipped with a co-oriented contact structure, we will take $V$ to be a co-orienting vector field for $\xi$.  Next we consider a version of the simplicity problem for links in $(M,V)$ which are everywhere transverse to $V$; that is, the tangent vector to the link and the vector $V$ at the corresponding point span a 2-plane. Such links are called {\it $V$-transverse}. When $V$ is a co-orienting vector field for a contact structure, $V$-transverse links are also called {\it pseudo-Legendrian}.  Pseudo-Legendrian links were introduced by Benedetti and Petronio \cite{benedetti2001combed, benedetti2001reidemeister}.  Each $V$-transverse link is equipped with a natural framing.  In the $V$-transverse category, a framed isotopy class of links is {\it simple} if any two $V$-transverse links in that isotopy class which are homotopic as $V$-transverse immersions are isotopic as $V$-transverse links. When that is not the case, the framed isotopy class is again called {\it non-simple}.  When the vector field $V$ is a co-orienting vector field for a contact structure $\xi$ on $M$, we show that if a framed isotopy class in $M$ is non-simple in the $V$-transverse category, then it is non-simple in the Legendrian category.  

\begin{rem}
    All links in this paper come with an ordering on their components.  All homotopies, link homotopies, and isotopies of links are assumed to preserve this ordering, i.e., they do not permute the components. 
\end{rem}

Our next task is to consider refinements of this notion of simplicity in both the $V$-transverse and Legendrian categories. We use the term {\it component-wise} isotopy to refer to a generic homotopy of a link in which double points occur only between distinct components of the link (i.e., for each component of the link, the restriction of the homotopy to that component is an isotopy). We use the term {\it link homotopy} to refer to a generic homotopy of a link in which {\it no} double points occur between distinct components of the link.  Component-wise isotopies and link-homotopies can be defined in the natural way in both the $V$-transverse and Legendrian categories.

The following is a summary of our main theorems. The details will be given in Section \ref{examples.sec}.
\begin{thm}\label{intro.thm} Let $M$ be an $S^1$-bundle over an oriented surface of genus at least 2. For each $k\in\mathbb{Z}$ we construct a nowhere-zero vectorfield $V_k$ on $M$, and $V_k$-transverse links in $(M, V_k)$, which are framed isotopic and homotopic as $V$-transverse multi-curves (i.e., have the same generalized classical invariants), and
    \begin{itemize}
        \item are component-wise isotopic as $V_k$-transverse links, but are not link-homotopic and hence not isotopic as $V_k$-transverse links, or
        \item are link-homotopic as $V_k$-transverse links, but are not component-wise isotopic and hence not isotopic as $V_k$-transverse links.
        
    \end{itemize}
\end{thm}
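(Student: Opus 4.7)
The plan is to leverage the non-simplicity machinery for knots developed in \cite{CC} and supplement it with an inter-component pairing adapted to the $V$-transverse setting. For each $k \in \mathbb{Z}$, I will take $V_k$ to be a vertical vector field along the $S^1$-fibers of $M$, modified by $k$ full rotations carried out in a fixed coordinate chart; this parallels the $d_3$-parametrization of overtwisted plane fields and produces an infinite family of homotopy classes of nowhere-zero vector fields on $M$. All local constructions are performed inside a fixed coordinate ball where $V_k$ has a standard model, with global loops travelling through the base surface $\Sigma$ of genus $\geq 2$.

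For the bullet giving links that are link-homotopic but not component-wise isotopic, I stabilize a non-simple knot by a disjoint unknot. Fix $k$ and apply the non-simplicity result of \cite{CC} to obtain a pair of $V_k$-transverse knots $K_0, K_1 \subset M$ that are framed isotopic, homotopic as $V_k$-transverse immersions, but not $V_k$-transverse isotopic. Let $J$ be any $V_k$-transverse curve placed in a small ball disjoint from both $K_i$, and set $L_i = K_i \sqcup J$. Then $L_0, L_1$ are framed isotopic and homotopic as $V_k$-transverse multi-curves, since the same holds for $K_0, K_1$ and $J$ is shared. Extending the $V_k$-transverse homotopy $K_0 \rightsquigarrow K_1$ by the identity on $J$ and generically perturbing so that all double points occur on the first component produces a $V_k$-transverse link homotopy $L_0 \rightsquigarrow L_1$. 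Conversely, any component-wise $V_k$-transverse isotopy $L_0 \to L_1$ would restrict to a $V_k$-transverse isotopy $K_0 \to K_1$, which does not exist; in particular $L_0$ and $L_1$ are not $V_k$-transverse link isotopic.

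For the dual bullet, take two $V_k$-transverse curves $A$ and $B$ supported in disjoint regions and representing noncommuting conjugacy classes in $\pi_1(M)$, which exist by hypothesis on genus. Set $L_0 = A \sqcup B$ and let $L_1 = A \sqcup B'$, where $B'$ is obtained from $B$ by a finger move pushing an arc of $B$ through a regular neighborhood of $A$ and back, thereby encircling $A$ once along a chosen loop. Carrying the finger move out inside a coordinate ball in which $V_k$ is standard keeps every intermediate curve $V_k$-transverse, so the move yields a component-wise $V_k$-transverse isotopy $L_0 \to L_1$ whose restriction to $B$ is an ambient isotopy crossing $A$ exactly once. Framed isotopy and multi-curve homotopy of $L_0$ and $L_1$ follow immediately. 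The obstruction to link-homotopy is a Milnor-type invariant taking values in a quotient of the group ring of $\pi_1(M \setminus A)$: the class of $B$ in that group changes nontrivially under the finger move, because the conjugacy class chosen for the encircling loop does not commute with the class of $B$.

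The main obstacle is pinning down a rigorous $V_k$-transverse link-homotopy invariant refined enough to distinguish $L_0$ from $L_1$ in the dual bullet while remaining insensitive to component-wise isotopy and to $V_k$-transverse multi-curve homotopy. In an $S^1$-bundle over a surface of genus $\geq 2$, components of a $V_k$-transverse link need not be null-homologous, so ordinary linking numbers are unavailable, and we must work with a $\pi_1$-refined pairing; verifying that it is well-defined on link-homotopy classes, factors out the ambiguity coming from isotopies of the complementary component, and takes distinct values on $L_0$ and $L_1$ is the principal technical hurdle. Once this invariant is established and nonvanishing is verified for the chosen conjugacy classes, the examples for all $k$ are obtained uniformly from the two constructions above.
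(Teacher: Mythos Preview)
Your approach to the second bullet (link-homotopic but not component-wise isotopic) is essentially the paper's: take a non-simple $V_k$-transverse knot pair from \cite{CC} and add a disjoint component. The paper realizes this with a self-clasped fiber $K_1$ together with a parallel fiber $K_2$, but the logic is identical to yours.

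The first bullet, however, has a genuine gap. The theorem requires $L_0$ and $L_1$ to be \emph{framed isotopic}, in particular smoothly isotopic as ordered links. Your finger move pushes an arc of $B$ once through $A$, so it multiplies the class of $B$ in $\pi_1(M\setminus A)$ by a conjugate of the meridian of $A$; that class (mod conjugacy) is a smooth link-isotopy invariant, so $L_0$ and $L_1$ are generically not smoothly isotopic, and your claim that ``framed isotopy \dots\ follows immediately'' is false. Worse, this cannot be repaired along the line you propose: your obstruction---a Milnor-type class in a quotient of $\mathbb{Z}[\pi_1(M\setminus A)]$---is itself a \emph{smooth} link-homotopy invariant, so whenever you do arrange $L_0$ and $L_1$ to be smoothly isotopic it will automatically agree on them. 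Any purely smooth invariant either obstructs the hypothesis (framed isotopy) or fails to obstruct the conclusion ($V$-transverse link-homotopy); the ``principal technical hurdle'' you flag is not a technicality but a conceptual obstruction, and what is needed is an invariant that is genuinely sensitive to the $V$-transverse structure.

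The paper supplies exactly this. Its links $K_1\sqcup K_2$ and $K_1^{i_1}\sqcup K_2^{i_2}$ (two once-linked fibers) differ only by local kink-pair insertions, so framed isotopy is trivial and a smooth invariant sees nothing. The obstruction $\nu$ to $V_k$-transverse link-homotopy sums, over inter-component crossings in a generic $V_k$-transverse homotopy, signed homotopy classes of \emph{$V$-transverse figure-eights} $K_1\bullet_p K_2$. These classes are computed by lifting to the unit circle bundle $E_{V_k}$ of $V_k^\perp$: the two figure-eight terms lift to $(\tilde f,\tilde f)$ and $(F^{2i_1}\tilde f,\,F^{2i_2}\tilde f)$, with $F$ the $S^1$-fiber class of $E_{V_k}$, and these are simultaneously conjugate in $\pi_1(E_{V_k})$ only when $i_1=i_2$. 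The non-cancellation is thus a $V$-transverse phenomenon, invisible in the smooth category. Relatedly, your description of $V_k$ (vertical field plus $k$ local rotations) does not match the paper's and would not support this argument; the paper chooses $V_k$ so that $e_{V_k^\perp}$ is Poincar\'e dual to $2k[d]$ for a fixed essential curve $d$, and it is precisely this nontrivial Euler class that both makes $K\mapsto K^{i}$ a $V_k$-transverse homotopy for $i\in k\mathbb{Z}$ and powers the $E_{V_k}$ calculation.
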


Our first examples of links with the properties in the two bullet points above occur distinct smooth isotopy classes.  Surprisingly, by choosing different $V$-transverse stabilizations of the link components (a notion introduced in Section~\ref{stab.sec}) we then construct distinct pairs of $V$-transverse links, one pair satisfying each of the criteria bullet point above, in the {\it same} smooth isotopy class.

 As a corollary we prove the following:

\begin{cor}\label{leg_summary.cor}
Let $(M, \xi_k)$ be an overtwisted manifold corresponding to the choice of $(M,V_k)$ in Theorem~\ref{intro.thm}. There exists Legendrian links in $(M,\xi_k)$ which are framed isotopic and homotopic as Legendrian multicurves (i.e., have the same generalized classical invariants), and are
\begin{itemize}
    \item component-wise Legendrian isotopic, but are not Legendrian link-homotopic, and hence not Legendrian isotopic as links, or
    \item  are link-homotopic as Legendrian links, but are not component-wise isotopic and hence not isotopic as Legendrian links.

\end{itemize}
 
\end{cor}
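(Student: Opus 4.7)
The plan is to deduce the corollary from Theorem~\ref{intro.thm} by transferring the $V_k$-transverse examples into the Legendrian category, in the same spirit as the general non-simplicity transfer promised in the introduction. First, I would invoke Eliashberg's $h$-principle for overtwisted contact structures, which says that every homotopy class of plane fields on $M$ contains an overtwisted contact structure unique up to isotopy. Applied to any plane field transverse to $V_k$, this yields an overtwisted contact structure $\xi_k$ whose co-orienting vector field is homotopic to $V_k$ through nowhere-zero vector fields; after such a homotopy I may assume that $V_k$ itself is the co-orienting vector field of $\xi_k$, preserving the $V_k$-transversality of the links from Theorem~\ref{intro.thm} throughout.

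Next, I would $C^0$-approximate each $V_k$-transverse link $L_i$ from Theorem~\ref{intro.thm} by a Legendrian link $L_i^{\mathrm{Leg}}$ in $(M,\xi_k)$ using the standard Legendrian approximation theorem for curves transverse to a contact co-orienting vector field. Choosing the approximation sufficiently $C^0$-close preserves the component-wise framed isotopy class and the homotopy class as Legendrian multicurves, because a small perturbation through $V_k$-transverse immersions is automatically a Legendrian homotopy after a further small perturbation. Applied parametrically, the same approximation sends a $V_k$-transverse component-wise isotopy to a component-wise Legendrian isotopy, and a $V_k$-transverse link-homotopy to a Legendrian link-homotopy (sufficiently small $C^0$-perturbations of pairwise disjoint components remain disjoint, and perturbations of an embedded component remain embedded). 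The positive conclusions in each bullet point of the corollary therefore follow from the corresponding positive conclusions of Theorem~\ref{intro.thm}.

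In the opposite direction, every Legendrian curve has tangent vector contained in $\xi_k$, hence is automatically transverse to $V_k$. Consequently, any Legendrian isotopy is a $V_k$-transverse isotopy, any component-wise Legendrian isotopy is a component-wise $V_k$-transverse isotopy, and any Legendrian link-homotopy is a $V_k$-transverse link-homotopy. The negative conclusions in the two bullet points (no Legendrian link-homotopy in the first case, no component-wise Legendrian isotopy in the second, and in both cases no Legendrian isotopy) therefore follow immediately from the corresponding negative statements in Theorem~\ref{intro.thm}.

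The main obstacle I expect is the parametric Legendrian approximation step, specifically verifying that a $V_k$-transverse component-wise isotopy can be upgraded to a \emph{component-wise Legendrian} isotopy whose intermediate links stay embedded on each component. A purely $C^0$-small perturbation argument handles link-homotopy cleanly, but for component-wise isotopy one must preserve component-level embeddedness of a Legendrian family. Since the links and homotopies under consideration are loose in $(M,\xi_k)$ by construction, this obstacle is overcome by invoking the loose Legendrian $h$-principle (\`a la Dymara and Etnyre--Fuchs) applied one component at a time, which guarantees that Legendrian approximations of isotopic loose $V_k$-transverse knots are themselves Legendrian isotopic through loose knots.
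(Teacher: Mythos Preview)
Your overall architecture is right and matches the paper's: transfer the negative conclusions using the trivial observation that Legendrian implies $V_k$-transverse, and transfer the positive conclusions via some form of Legendrian approximation together with a loose $h$-principle. But two of your steps are genuinely underdeveloped, and the paper handles them differently.

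\textbf{Construction of the Legendrian pair.} You approximate $K_1\sqcup K_2$ and $K_1^{i_1}\sqcup K_2^{i_2}$ \emph{separately} by Legendrian links. The paper does not do this: it takes a single Legendrian approximation $L_1\sqcup L_2$ of $K_1\sqcup K_2$ and then sets $L=(L_1)_{0,i_1}\sqcup(L_2)_{0,i_2}$ and $L'=(L_1)_{i_1,0}\sqcup(L_2)_{i_2,0}$, using the identity $(L_j)_{i_j,0}\simeq ((L_j)_{0,i_j})^{i_j}$ as $V_k$-transverse knots from Section~\ref{stab.sec}. This guarantees by construction that $L$ and $L'$ stand in exactly the $V_k$-transverse relation of $K_1\sqcup K_2$ and $K_1^{i_1}\sqcup K_2^{i_2}$, and that they are framed isotopic. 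Your separate approximations would need an extra argument that each Legendrian approximation is $V_k$-transversely isotopic (not merely homotopic) to the original link, which you do not supply.

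\textbf{The parametric approximation step.} Your claim that a $V_k$-transverse link-homotopy can simply be $C^0$-perturbed to a Legendrian link-homotopy is where the real content hides. One cannot approximate a one-parameter family of $V$-transverse curves by Legendrian curves without first stabilizing, and one must then check that the stabilizations needed at the two endpoints agree. The paper packages this as Theorem~\ref{Vtrans_to_Leg_promotion.thm}(1)--(2): after sufficiently many stabilizations the $h$-principle gives a Legendrian homotopy $C^0$-close to the given one; comparing framings and lifts to $E_V$ shows $n_1=n_3$, $n_2=n_4$; and finally any $L$ is Legendrian homotopic to $L_{n,n}$ locally. Your sentence ``a purely $C^0$-small perturbation argument handles link-homotopy cleanly'' skips all of this.

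\textbf{Looseness.} You assert the links are ``loose in $(M,\xi_k)$ by construction,'' but nothing in Eliashberg's existence theorem for overtwisted structures forces an overtwisted disk to lie in the complement of a prescribed link. The paper explicitly performs a full Lutz twist along a transverse unknot in the complement of $L\cup L'$ (preserving the homotopy class of $\xi_k$) before invoking \cite[Theorem~7.19]{cieliebak2012stein} for the component-wise isotopy transfer. You need the same move; the reference ``\`a la Dymara and Etnyre--Fuchs'' is also not quite the right citation for loose links.

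In short, your strategy is the paper's strategy, but the paper replaces your two hand-waves (parametric approximation, automatic looseness) with Theorem~\ref{Vtrans_to_Leg_promotion.thm} and an explicit Lutz twist, and it builds the Legendrian pair via controlled stabilization rather than independent approximation.
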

In Corollary \ref{leg_summary.cor}, the components of the link are loose in $(M,\xi_k)$. Again, our examples show that both of these non-simplicity phenomena can occur in a single smooth isotopy class, by controlling the number of stabilizations of each of the link components. Note that we construct an infinite family of Legendrian links which are component-wise Legendrian isotopic but not Legendrian isotopic as links. This phenomena has been observed previously in tight manifolds \cite{jordan2006generating, chatterjee2025cable}.

Finally,  we state the following theorem, which gives sufficient conditions for a framed isotopy class to be simple in the $V$-transverse category.  This theorem was proven for isotopy classes of knots in \cite{CC}, and generalizes immediately to links. It highlights the fact that the above examples must sit in overtwisted contact manifolds. The key ingredient in the proof of statement (3) is that the Euler class of a tight contact structure vanishes on smooth mappings of tori, as shown in \cite[Corollary 3.10]{chernov2004relative}; in the embedded case this is due to Eliashberg \cite{eliashberg1989classification}.

\begin{thm}[\cite{CC}]\label{thm:framed_simple}
    Let $V$ be a nowhere-zero vector field on an oriented $3$-manifold $M$ satisfying one of the following three conditions:
    \begin{enumerate}
        \item The Euler class $e_{{V}^\perp}\in H ^2(M;\mathbb{Z})$ is a torsion element, or in particular, if $e_{{V}^\perp}=0.$
        \item The manifold $M$ is closed, irreducible and atoroidal.
        \item $V$ is a co-orienting vector field of a contact structure $\xi$ such that $(M,\xi)$ is tight, or more generally, such that $(M,\xi)$ is a covering of a tight contact manifold.
    \end{enumerate}
    Then every framed isotopy class  of $n$-component multi-curves in $M$ is $V$-transversely simple.
\end{thm}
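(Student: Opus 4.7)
The plan is to extend the knot argument of \cite{CC} verbatim for the obstruction-theoretic part, and then add one step to assemble component-wise $V$-transverse isotopies into an isotopy of the whole link.

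Setup. Suppose $L_0, L_1$ are $V$-transverse $n$-component links that are framed isotopic and homotopic through $V$-transverse immersed multi-curves. Choose a smooth framed isotopy $F:\bigsqcup_{i=1}^n S^1\times[0,1]\to M$ from $L_0$ to $L_1$, and a regular $V$-transverse homotopy $H:\bigsqcup_{i=1}^n S^1\times[0,1]\to M$ between them. For each $i$, gluing the two cylinders $F|_i, H|_i$ along their common boundary circles produces a map $\tau_i:T^2\to M$. As in the knot case, the obstruction to deforming $F|_i$ through smooth isotopies (rel endpoints) into a $V$-transverse isotopy of the $i$-th component is, up to sign, the integer $\langle e_{V^\perp},[\tau_i]\rangle$.

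Vanishing of the obstruction. First, I would verify that under each of the three hypotheses the pairing $\langle e_{V^\perp},[\tau_i]\rangle$ vanishes for every $i$. Under (1), $e_{V^\perp}$ is torsion and the pairing is an integer, so it is zero. Under (2), a closed irreducible atoroidal 3-manifold admits no $\pi_1$-injective immersion $T^2\to M$; any other map $T^2\to M$ factors up to homotopy through $S^1$ and so is null-homologous, giving $[\tau_i]=0$. Under (3), with $V^\perp=\xi$, Corollary~3.10 of \cite{chernov2004relative} (upgrading Eliashberg's embedded statement \cite{eliashberg1989classification}) guarantees $e_\xi$ vanishes on every immersed torus in a tight contact manifold; the covering case reduces to this after pulling back both the torus and the contact structure.

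Assembly step. Once the component-wise obstructions are killed, the construction in \cite{CC} produces a $V$-transverse isotopy $F_t^i$ taking each component $K_i^0\subset L_0$ to $K_i^1\subset L_1$ individually. The additional input for the link case is to combine these into a $V$-transverse isotopy of the full link preserving disjointness of all components at every time. This is a general-position argument in $M\times[0,1]$: for generic one-parameter families of $V$-transverse embedded curves, inter-component intersections occur at isolated parameter values and transversely at isolated points, and so can be removed by an arbitrarily small $C^\infty$-perturbation. Because $V$-transversality is an open condition on the $C^1$ topology, such a perturbation stays within the $V$-transverse category and yields the required $V$-transverse link isotopy.

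Main obstacle. The step most likely to demand care is the assembly: one must run the component-wise $V$-transverse isotopies in a coordinated way while maintaining disjointness and $V$-transversality simultaneously, taking into account that the perturbations used to remove transverse double points between distinct components must not disturb the endpoint conditions $L_0, L_1$ nor push the curves off of $V$-transversality. Openness of the $V$-transverse condition plus standard dimension counts in 3-manifolds resolve this, essentially by the same transversality arguments used to show that link isotopy is generic among smooth homotopies in ambient dimension $3$.
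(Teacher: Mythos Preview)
The obstruction-theoretic part of your argument is correct and matches \cite{CC}: under each of the three hypotheses the pairing $\langle e_{V^\perp},[\tau_i]\rangle$ vanishes for every component.  The gap is in your assembly step.

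Your claim that ``inter-component intersections occur at isolated parameter values and transversely at isolated points, and so can be removed by an arbitrarily small $C^\infty$-perturbation'' is false.  The track of each component isotopy $t\mapsto F_t^i$ is a $2$-dimensional surface in the $4$-manifold $M\times[0,1]$; transverse intersections of surfaces in a $4$-manifold are $0$-dimensional and \emph{stable} under small perturbation, not removable.  This is exactly the dimension count that makes a crossing change between distinct link components a codimension-$1$ event in a one-parameter family, rather than a codimension-$\geq 2$ event you could perturb away.  Put differently: having deformed each $F|_i$ independently to a $V$-transverse isotopy, you know the resulting family $(F_t^1,\dots,F_t^n)$ is homotopic rel endpoints to the link isotopy $F$ as a path of \emph{framed multi-curves}, but you have no reason to expect that homotopy to stay inside the space of $V$-transverse multi-curves, so it does not let you cancel the inter-component crossings while remaining $V$-transverse.

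No assembly is actually needed.  The paper's route (Proposition~\ref{approximate_framed.prop}, which is already stated for links) is to $C^0$-approximate the given framed \emph{link} isotopy $F$ directly by a $V$-transverse isotopy.  Because $F$ already has pairwise disjoint components and disjointness of compact sets is $C^0$-open, a sufficiently close $V$-transverse approximation is automatically a $V$-transverse \emph{link} isotopy.  It carries $L_0$ to a link that differs from $L_1$ only by local kink pairs on each component; your component-wise computation then shows the net kink count on every component is zero under each hypothesis, and these kinks cancel by local $V$-transverse isotopies supported away from the other components.  So the correct order of operations is: approximate the link isotopy all at once, then analyze the residual kinks component by component --- not the reverse.
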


\section{Background on $V$-transverse links}\label{background.sec}

All 3-manifolds $M$ are smooth, closed, connected, and oriented, and equipped with an auxilliary Riemannian metric.  All spaces of knots and curves in $M$ are equipped with $C^\infty$-topology. Isotopy and homotopy classes of links and multi-curves are defined to be the path components of the spaces of links and multi-curves in $M$, respectively.

\begin{defn}
    A {\it framed multi-curve} $C=(C_1, C_2,\cdots, C_k )$ in an oriented $3$-manifold $M$ is a framed immersion of a  disjoint union of $S^1$'s, i.e each $C_i\colon S^1\rightarrow M$ is an immersion together with a non-vanishing section of its (induced) normal bundle. A {\it framed link} is a framed multi-curve that is also an embedding. 
\end{defn}

\begin{defn}
   Let $V$ be a nowhere-vanishing vector field on an oriented 3-manifold $M$. A {\it $V$-transverse multi-curve} is a multi-curve $C=(C_1,C_2,\cdots C_k)$ in $M$ such that each $C_i$ is an immersion of $S^1$ into $M$ and $C_i'(t) $ and $V_{C_i(t)}$ span a $2$-plane for all $t\in S^1.$ A $V$-transverse link is a $V$-transverse multi-curve that is also an embedding.
\end{defn}
Each component of a $V$-transverse multi-curve has a natural framing given by the orthogonal projection of $V_{C_i(t)}$ to the normal bundle of the curve $C_i(t)$.  A framed homotopy (resp. isotopy) of $V$-transverse multi-curves (resp. links) is a framed homotopy (resp. isotopy) between the $V$-transverse multi-curves (resp. links) equipped with this natural framing.  The intermediate multi-curves (resp. links) need not be $V$-transverse.

When $V$ is a co-orienting vector field for a contact structure on $M$, we call a $V$-transverse multi-curve or link {\it pseudo-Legendrian.}

\subsubsection{Generalized Classical Invariants and the Botany Problem}

Classically, the botany problem in Legendrian knot theory asks for a classfication of all Legendrian knot types in a given smooth isotopy class with a given Thurston-Bennequin and rotation number. For links, one can instead classify all Legendrian link types in a given smooth isotopy class whose components have given sequences of Thurston-Bennequin and rotation numbers. Of course, this formulation of the botany problem makes sense only when both the Thurston-Bennequin and rotation numbers are defined for all link components. As mentioned in the introduction, the Thurston-Bennequin number of a Legendrian knot is defined only when the knot is null-homologous, as the self-linking number of the knot with respect to its contact framing.  The rotation number of a knot is defined either for a null-homologous knot, in which case the rotation number can be interpreted as a relative Euler class evaluated on a choice of Seifert surface, or for a knot in a parallelizable contact structure $\xi$, in which case the rotation number can be viewed as the degree of the mapping $K\mapsto K'$ with respect to a choice of trivialization of $\xi$. 

The same challenge arises when formulating the botany problem for $V$-transverse links.  The self-linking number of each component of a $V$-transverse link (computed with respect to the natural framing of the $V$-transverse link) are defined only when the component is null-homologous. When the 2-plane bundle $V^\perp$ is parallelizable, one can define the rotation number $\text{rot}(K)$ of each component $K$ by orthogonally projecting $K'$ to $V^\perp$ and computing the degree of the mapping $K\mapsto \text{proj}_{V^\perp}K'$ with respect to a choice of trivialization of $V^\perp$.

The next three propositions, whose proofs are straightforward, motivate the use of the framed isotopy class and $V$-transverse (resp. Legendrian) homotopy class as the generalized classical invariants that should be used in the generalized $V$-transverse (resp. Legendrian) botany problem in arbitrary 3-manifolds, when the link components are not null-homologous, or the contact structure is not parallelizable.  (Note that we do not use these facts later in the paper.)

\begin{prop}
    Two framed  links in $\mathbb{R}^3$  are isotopic as framed links if and only if they are smoothly isotopic and their respective components have the same self-linking number.
\end{prop}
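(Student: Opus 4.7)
The forward direction is essentially automatic: a framed isotopy is in particular a smooth isotopy, and the self-linking number of each component (computed as the linking number between the component and its pushoff along the framing) is manifestly invariant under framed isotopy. The substance of the proposition is therefore the converse.

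For the converse, suppose $L_0 = (K_1^0,\dots,K_n^0)$ and $L_1 = (K_1^1,\dots,K_n^1)$ are framed links that are smoothly isotopic with matching component-wise self-linking numbers. The plan is to factor the framed isotopy into two steps. First, apply the given smooth isotopy to transport $L_0$ to $L_1$; this carries the framing on $L_0$ to some (possibly new) framing $f$ on $L_1$. Second, show that $f$ is homotopic through framings (rel.\ the underlying link $L_1$) to the original framing on $L_1$, and use this framing homotopy as a framed isotopy fixing the underlying embedding.

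For the second step, I would use the standard fact that on any knot $K \subset \mathbb{R}^3$ (more generally, in any $3$-manifold with trivialized normal bundle in the relevant sense), the set of homotopy classes of non-vanishing normal sections is a torsor over $\mathbb{Z}$, with the difference between two framings detected precisely by the difference of their self-linking numbers with $K$. Applied component-wise to $L_1$: because the push-forward framing $f$ on each $K_i^1$ produces the same self-linking number as the original framing (the first by invariance under the smooth isotopy from step one and the hypothesis that $L_0$'s self-linking numbers agree with $L_1$'s, the second by assumption), the two framings on $K_i^1$ must agree as elements of that $\mathbb{Z}$-torsor. Hence they are homotopic as nowhere-zero sections of the normal bundle, and we can realize the homotopy as a framed isotopy supported in a neighborhood of $L_1$ with underlying link fixed.

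The main (mild) obstacle is the bookkeeping in the first step: to make the push-forward framing well-defined, one should work with the ambient isotopy extension of the smooth isotopy (or equivalently, with an isotopy of framed embeddings of the disjoint union of solid tori), so that the framing is genuinely transported along the isotopy. Once this is set up, the two steps concatenate to a framed isotopy from $L_0$ to $L_1$, proving the converse. I expect no further subtleties, as linking numbers between distinct components play no role here: framings are local to each component, and the self-linking number of $K_i$ depends only on the framing of $K_i$, not on the framings of the other components.
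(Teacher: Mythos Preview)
Your proof is correct and complete. The paper does not supply a proof of this proposition, declaring it (together with the two propositions that follow) ``straightforward''; your argument is exactly the standard one the authors presumably had in mind.
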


\begin{prop} Suppose $V$ is a nowhere-zero vector field on the 3-manifold $M$, such that $V^\perp$ is parallelizable.
    Let $C_1$, $C_2$ be $V$-transverse curves in the same homotopy class of immersed curves in $M$. Then $C_1$ and $C_2$ are homotopic as $V$-transverse curves if and only if $\text{rot}(C_1)=\text{rot}(C_2)$. 
\end{prop}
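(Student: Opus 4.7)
The plan is to prove the two directions separately. The forward implication is immediate from the definition. Suppose $H_t: S^1 \to M$ is a $V$-transverse homotopy from $C_1$ to $C_2$. Then $\sigma_t(\theta) := \pi_{V^\perp}(H_t'(\theta))$ is a continuous family of nowhere-vanishing sections of $H_t^*V^\perp$. Using the global trivialization of $V^\perp$, this becomes a continuous family of nowhere-vanishing maps $S^1\to\mathbb{R}^2$, whose winding number is a homotopy invariant and equals $\text{rot}(C_i)$ at the endpoints, so $\text{rot}(C_1)=\text{rot}(C_2)$.

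For the backward direction I would invoke an $h$-principle. $V$-transversality of an immersion is an open, $\text{Diff}(S^1)$-invariant condition on the $1$-jet, so by Gromov's theorem for open invariant differential relations (alternatively, by applying the Hirsch--Smale immersion theorem to maps $S^1\to M\times S^1\cong S(V^\perp)$ whose projection to $M$ is an immersion), the space of $V$-transverse immersions $S^1\to M$ is weakly homotopy equivalent to the space of \emph{formal} $V$-transverse immersions --- pairs $(f,\hat\sigma)$ consisting of an immersion $f: S^1 \to M$ together with a nowhere-vanishing section $\hat\sigma$ of $f^*V^\perp$. In particular, two honest $V$-transverse immersions are $V$-transversely homotopic if and only if their formal data are formally homotopic. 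To build a formal homotopy from $C_1$ to $C_2$, pick any homotopy $f_t$ between their underlying immersions; using the trivialization of $V^\perp$, the sections $\pi_{V^\perp}(C_1')$ and $\pi_{V^\perp}(C_2')$ become maps $S^1\to\mathbb{R}^2\setminus\{0\}$ of equal degree (the common rotation number), hence are connected by a path $\hat\sigma_t$ of nowhere-vanishing sections. The pair $(f_t,\hat\sigma_t)$ is the desired formal homotopy, which the $h$-principle promotes to a bona fide $V$-transverse homotopy.

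I expect the main technical obstacle to be applying the $h$-principle in the correct relative form --- that is, producing a $V$-transverse homotopy with the prescribed endpoints $C_1,C_2$ rather than only a weak equivalence of classifying spaces. Since the source $S^1$ is $1$-dimensional this is handled by the standard relative version of the theorem, but one should also carefully verify that parallelizability of $V^\perp$ is exactly what is needed for the rotation number to be a well-defined integer degree (as opposed to a class in twisted cohomology), so that the $\pi_0$-calculation of the space of nowhere-vanishing sections of $f^*V^\perp$ reduces to $\pi_0$ of maps $S^1\to\mathbb{R}^2\setminus\{0\}$.
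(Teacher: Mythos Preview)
Your argument is correct. The paper does not actually supply a proof of this proposition (it is one of three propositions whose proofs are declared ``straightforward''), but the tool you reach for is exactly the one the paper invokes later, in the proof of Lemma~\ref{lem:Vtranscurveclassification}: the differential relation of $V$-transversality is ample, so the space of $V$-transverse immersions of $S^1$ into $M$ is weakly equivalent to the free loop space $\Omega E_V$, where $E_V$ is the unit circle bundle of $V^\perp$. Under the parallelizability hypothesis one has $E_V\cong M\times S^1$, and the rotation number is precisely the degree of the $S^1$-component of the lift, so two $V$-transverse curves with the same underlying free homotopy class and the same rotation number have freely homotopic lifts. Your phrasing in terms of formal solutions $(f,\hat\sigma)$ is equivalent to this.

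One remark on your anticipated obstacle: you do not need a relative version of the $h$-principle. The weak homotopy equivalence already yields a bijection on $\pi_0$, so once you exhibit any path in the formal space joining the formal data of $C_1$ and $C_2$, it follows that $C_1$ and $C_2$ lie in the same path component of the honest $V$-transverse immersions. That is exactly the statement that there exists a $V$-transverse homotopy with endpoints $C_1$ and $C_2$; no further endpoint-fixing argument is required.
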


\begin{prop} Suppose $\xi$ is a parallelizable, co-oriented contact structure on an oriented 3-manifold $M$.   Let $C_1$, $C_2$ be Legendrian curves in the same homotopy class of immersed curves in $M$. Then $C_1$ and $C_2$ are homotopic as Legendrian curves if and only if $\text{rot}(C_1)=\text{rot}(C_2)$. 
\end{prop}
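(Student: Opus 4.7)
The plan is to invoke the $h$-principle for Legendrian immersions of $S^1$ into a contact $3$-manifold (Duchamp--Gromov): the inclusion of the space of Legendrian immersions into the space of \emph{formal} Legendrian immersions---namely, pairs $(f,F)$ with $f\colon S^1\to M$ a smooth map and $F\colon TS^1\to f^*\xi$ a bundle monomorphism covering $f$---is a weak homotopy equivalence. In particular, two Legendrian immersions of $S^1$ are Legendrian homotopic if and only if their formalizations lie in the same path component of the formal space.

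The ``only if'' direction is immediate: $\text{rot}(C)$ is by definition the degree of the map $S^1\to S^1$ obtained by normalizing $\mathrm{proj}_{\xi}C'$ relative to a trivialization of $\xi$, and this degree is locally constant on the space of formal Legendrian immersions, hence invariant under Legendrian homotopy. For the converse, parallelizability allows us to fix a global trivialization $\xi\cong M\times\mathbb{R}^2$, which combined with the standard trivialization of $TS^1$ identifies a formal Legendrian immersion with a pair $(f,\sigma)$, where $f\colon S^1\to M$ and $\sigma\colon S^1\to \mathbb{R}^2\setminus\{0\}$. Thus the space of formal Legendrian immersions is naturally identified with $\mathrm{Map}(S^1,M)\times\mathrm{Map}(S^1,\mathbb{R}^2\setminus\{0\})$, whose path components are labeled by pairs consisting of a homotopy class of map $S^1\to M$ and an element of $\pi_0(\mathrm{Map}(S^1,\mathbb{R}^2\setminus\{0\}))=\mathbb{Z}$. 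By construction of $\text{rot}$, the second factor agrees with the rotation number, so the hypotheses on $C_1,C_2$ place their formalizations in the same path component, and the $h$-principle produces the desired Legendrian homotopy.

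The delicate point is the appeal to the $h$-principle; once it is granted, the argument becomes a purely formal computation of $\pi_0$ of a product of mapping spaces. The hypothesis of parallelizability of $\xi$ is used precisely to reduce the formal space to this product and to interpret the fiberwise degree as the rotation number. Without this hypothesis one could still classify path components of the formal space, but the resulting invariant would no longer reduce to a single integer per component.
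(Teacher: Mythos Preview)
Your argument is correct. The paper does not actually supply a proof of this proposition; it merely states that the three propositions in this block have ``straightforward'' proofs and are included only for motivation. So there is no paper proof to compare against directly.

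That said, your approach via the $h$-principle for Legendrian immersions is precisely in the spirit of the paper's methods: later, in the proof of Lemma~\ref{lem:Vtranscurveclassification}, the authors invoke the analogous $h$-principle for $V$-transverse immersions (ampleness of the transversality relation, citing \cite{eliashberg2002introduction}) to identify the relevant space of curves with a free loop space and reduce the problem to computing traces in $\pi_1$. Your reduction of the formal space to $\mathrm{Map}(S^1,M)\times\mathrm{Map}(S^1,\mathbb{R}^2\setminus\{0\})$ via the trivialization of $\xi$ is the natural way to make ``straightforward'' precise in this setting, and your observation that $\pi_0$ of the second factor recovers the rotation number is exactly right. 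One small point worth making explicit: the hypothesis that $C_1$ and $C_2$ lie in the same homotopy class \emph{of immersed curves} a priori says more than that they are freely homotopic as maps, but since $M$ is an oriented (hence parallelizable) $3$-manifold, $\pi_0(\mathrm{Imm}(S^1,M))\cong\pi_0(\Omega STM)\cong\pi_0(\Omega M)$, so the two notions coincide and your identification of the first factor with $\mathrm{Map}(S^1,M)$ loses no information.
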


The above discussion shows that the appropriate generalization of the classical Botany Problem in the $V$-transverse category is:

 {\it Classify all $V$-transverse isotopy classes of $V$-transverse links in a specified isotopy class of framed links and specified homotopy class of $V$-transverse multi-curves.}

Similarly, the generalization of the classical Botany Problem for Legendrian links in arbitrary contact manifolds is:

 {\it Classify all Legendrian isotopy classes of Legendrian links a specified isotopy class of framed links and specified homotopy class of Legendrian immersions.}

\subsection{Kinks and stabilization}\label{stab.sec}
We next define a stabilization operation for $V$-transverse knots/links, and compare it with the well-known stabilization operation in the Legendrian category. Let $K$ be a component of a $V$-transverse multicurve in $M$. We consider a coordinate chart $\phi:U\rightarrow M$ such that $U$ contains an unknotted arc of $K$ and $V=\phi^{-1}_*(\frac{\partial}{\partial z})$. Then the $V-$transverse curve $K^i$ is obtained from $K$ by adding $i$ pairs of kink as shown on the top of Figure \ref{kink.fig} if $i>0$. If $i<0$, then we add $|i|$ pairs as shown in the bottom of Figure \ref{kink.fig} and denote the result by $K^{-i}$.

 \begin{prop}\cite{CC}
     The $V$-transverse knots $K^i$ are all isotopic are framed knots. 
 \end{prop}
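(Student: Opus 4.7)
The plan is to show that adding a single pair of kinks as in Figure~\ref{kink.fig} does not change the framed isotopy class of $K$; the full statement then follows by induction on $|i|$. The key point is that the two kinks in a pair are arranged so that their writhe contributions to the natural $V$-transverse framing cancel, and that together they form a Reidemeister~II configuration that can be smoothly undone.

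Concretely, I would work in the coordinate chart $\phi\colon U\to M$ in which $V=\phi_\ast(\partial_z)$. The natural framing of a $V$-transverse arc at a point $p$ is the orthogonal projection of $\partial_z$ onto the normal plane of the arc. Along a single kink the tangent vector makes one full rotation, and a direct inspection of the picture shows that the natural framing contributes $\pm 1$ to the self-linking number of the arc with its natural pushoff, with the sign determined by the orientation of the kink. Because the two kinks in each of the top and bottom configurations of Figure~\ref{kink.fig} have opposite signs, their contributions cancel, so $K^i$ and $K$ have the same self-linking number with respect to the natural framing.

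Having matched the framings, it remains to exhibit a smooth framed isotopy. The pair of opposite-signed kinks sits inside $U$ as a localized Reidemeister~II configuration, which admits a standard smooth isotopy to a straight arc; since the net rotation of the natural framing through the pair is zero, we can interpolate the framings continuously through this isotopy without introducing a zero, yielding a framed isotopy supported in $U$. The same argument applies to both the top and bottom pairs of Figure~\ref{kink.fig}, so iterating gives $K^i\simeq K^0=K$ as framed knots for every $i\in\mathbb{Z}$. The delicate point, which is really the only thing to check, is the sign computation confirming that the two kinks within a pair contribute opposite writhes; this reduces to a local diagrammatic verification and is the reason the argument does not also give $V$-transverse isotopy.
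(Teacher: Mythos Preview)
The paper does not actually supply a proof of this proposition; it is stated with a citation to \cite{CC} and no argument is given here. So there is no ``paper's own proof'' to compare against, and your task reduces to whether your argument stands on its own.

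Your approach is essentially correct and is the natural one: the pair of kinks added in each step has opposite local writhe, so the blackboard framing induced by $V=\partial_z$ is unchanged, and the modification is supported in a ball where the arc is unknotted, hence the framed isotopy exists. Two small points of imprecision are worth tightening. First, two adjacent kinks do not literally form a Reidemeister~II bigon; what you really need is either an explicit regular isotopy (a short sequence of Reidemeister~II and III moves suffices to cancel an adjacent pair of opposite-writhe kinks), or simply the fact that writhe is a complete invariant of unknotted framed arcs in a $3$-ball rel boundary. Second, the sentence about ``the net rotation of the natural framing through the pair being zero'' conflates the rotation of the tangent vector (which is $\pm 2$ here, and is precisely why $K^i$ need not be $V$-transversely isotopic to $K$) with the change in framing (which is the net writhe, and is zero). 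The framing vector in this chart is just $\partial_z$, which is normal to the planar arc throughout; what matters is that the self-linking contribution vanishes, which you already argued. With those two clarifications your proof is complete.
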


 The $V$-transverse stabilization operation is not the same as stabilization in the Legendrian category.  The operations are related as follows. The stabilization of a Legendrian link component $L$ occurs in a local chart contactomorphic to $(\mathbb{R}^3,\xi_{std})$ where $\xi_{std}=\ker(dz-ydx)$. Such a chart always exists by Darboux's theorem. Suppose that $L_{i,j}$  denotes the Legendrian knot obtained from $L$ by adding $i$ positive and $j$ negative stabilizations.  See Figure~\ref{fig:stab}. As shown in \cite{CC}, $L_{i,0}$ and $(L_{0,i})^i$ (the superscript denoting $V$-transverse stabilization) are isotopic as $V$-transverse knots where $V=\phi_*^{-1}(\frac{\partial}{\partial z})$; the isotopy can be seen in the Lagrangian projection. Since $\frac{\partial}{\partial z}$ is a co-orienting vector field for $\xi_{std}$, this $V$-transverse isotopy is also a pseudo-Legendrian isotopy.

\begin{figure}
\labellist
\small\hair 2pt
\pinlabel {$K^{i}$} at 10 250
\pinlabel {$K^{-i}$} at 20 35

\endlabellist
    \centering
    \includegraphics[scale=0.4]{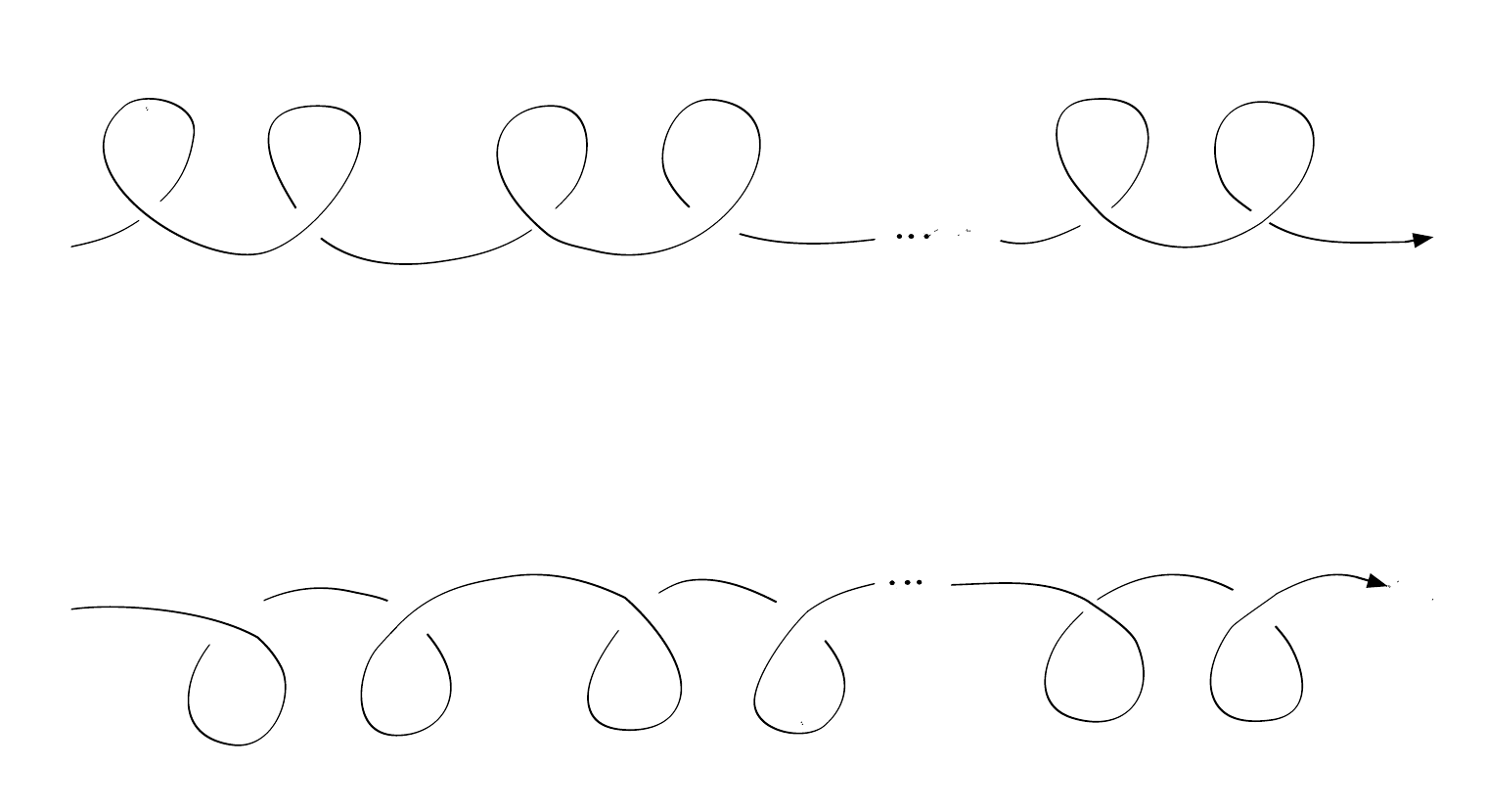}
    \caption{$V$-transverse stabilizations $K^i$ and $K^{-i}$. In this local chart, $\frac{\partial}{\partial z}$ points out of the page.}
    \label{kink.fig}
\end{figure}

\begin{figure}
\labellist
\small\hair 2pt
\pinlabel {$L$} at 450 480
\pinlabel{$L_{i,0}$} at 220 200
\pinlabel {$L_{0,i}$} at 680 200

\endlabellist
    \centering
    \includegraphics[scale=0.4]{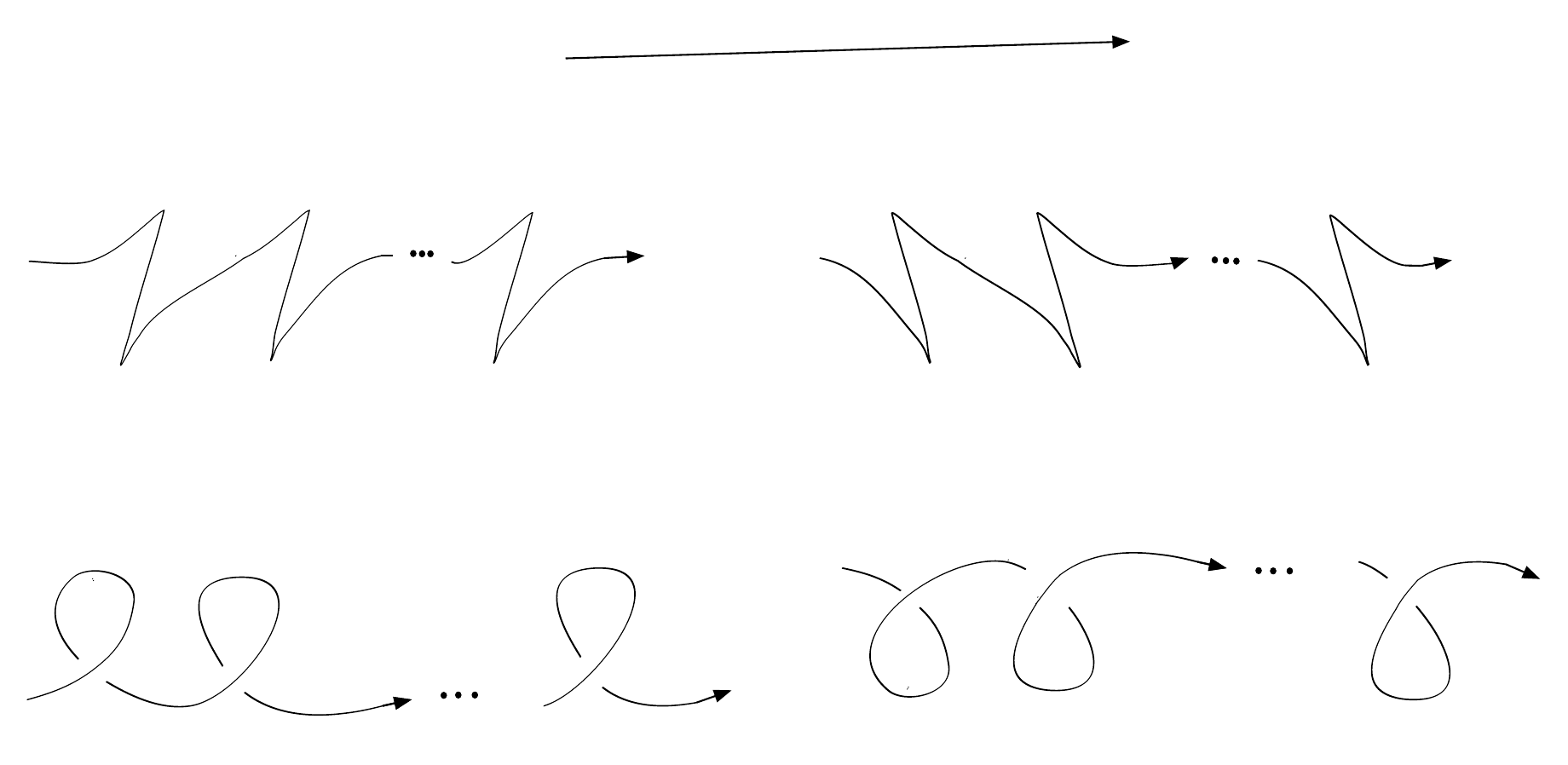}
    \caption{The Legendrian link component $L_{i,0}$ and $L_{0,i}$ in the front and Lagrangian projections. In the Lagrangian projection, $\frac{\partial}{\partial z}$ points out of the page.}
    \label{fig:stab}
\end{figure}

\section{Spaces of Framed and $V$-Transverse Multicurves}

\subsection{Approximating by $V$-transverse homotopies and isotopies} We recall two propositions from \cite{CC} for knots and generalize directly to links. To simplify notation, we write the result for knots and 2-component links, but the links can have any number of components.

\begin{prop}[\cite{CC}, Lemma 3.4]\label{approximate_framed.prop}
    Any framed isotopy from $K$ (resp. $K_1\sqcup K_2$) to $K'$ (resp. $K_1'\sqcup K_2'$) can be $C^0$-approximated by a $V$-transverse isotopy. In particular, $K$ (resp. $K_1'\sqcup K_2'$) is $V$-transversely isotopic to $K^i$ (resp. $K_1^{i_1}\sqcup K_2^{i_2}$) for some $i$ (resp. $i_1,i_2$) $\in \mathbb{Z}$.
\end{prop}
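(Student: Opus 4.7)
The plan is to approximate the framed isotopy by a $V$-transverse isotopy via local perturbations in Darboux-style charts where $V=\partial/\partial z$, at the cost of introducing kink pairs that account for the stabilization index $i$. Throughout, I treat the knot case; the link case reduces to it component-wise, with the additional routine observation that sufficiently small perturbations of an embedded isotopy introduce no new inter-component intersections.

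I would begin by choosing a finite open cover of the image of $F\colon [0,1]\times S^1\to M$ by charts $U_\alpha$ in which $V|_{U_\alpha}=\partial/\partial z$; such charts exist since $V$ is nowhere-vanishing. After subdividing the time interval and the source circle, I can arrange that each small piece of $F$ maps into a single chart. In such a chart, a curve $\gamma(s)=(x(s),y(s),z(s))$ is $V$-transverse if and only if its horizontal projection $(x(s),y(s))$ is an immersion, i.e., $(x'(s),y'(s))\neq(0,0)$ for every $s$. On each piece where this fails, I would apply a small $C^0$-perturbation in the horizontal direction along arcs where the tangent is nearly vertical. By a standard transversality argument (Sard applied to the horizontal projection), a generic perturbation of this kind is $V$-transverse at every time. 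These wiggles may produce small loops in the horizontal projection, which are precisely the kink pairs defining the $V$-transverse stabilization operation from Section~\ref{stab.sec}. Gluing the local modifications together with a partition of unity in time and smoothing the transitions yields a global path $\widetilde{F}$ of $V$-transverse immersions, $C^0$-close to $F$; since $V$-transverse embeddings form an open subset of $V$-transverse immersions and $F$ is an embedded isotopy, a sufficiently small such $\widetilde{F}$ is automatically a path of embeddings, hence a genuine $V$-transverse isotopy.

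The main obstacle is pinning down the endpoints of $\widetilde{F}$ up to $V$-transverse isotopy. They are $C^0$-close to $K$ and $K'$, but $C^0$-closeness alone does not entail $V$-transverse isotopy, as the approximation may introduce kink pairs near the endpoints. To handle this, I would argue that any two $V$-transverse curves that are $C^0$-close and lie in a common framed isotopy class must differ by a finite sequence of kink stabilizations: applying the same local-chart analysis to a thin framed cobordism between them, with no large deformation required, the only remaining flexibility is the local kink count. Finally, comparing the natural $V^\perp$-framing induced by $\widetilde{F}$ with the given framings of $K$ and $K'$ shows that the two framings agree up to this finite kink count; recording this net number at the target endpoint produces the integer $i\in\mathbb{Z}$ for which $K'$ is $V$-transversely isotopic to $K^i$, and analogously the tuple $(i_1,i_2)\in\mathbb{Z}^2$ in the two-component case.
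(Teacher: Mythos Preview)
The paper does not actually prove this proposition: it is quoted verbatim from \cite{CC}, Lemma~3.4, with the remark that the link case follows immediately from the knot case. There is therefore no argument in the present paper to compare your attempt against.

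On its own merits, your outline is in the right spirit---straighten $V$ locally to $\partial/\partial z$, reduce $V$-transversality to the horizontal projection being an immersion, and perturb generically---but two points are under-argued. First, you do not arrange that the approximating $V$-transverse isotopy starts \emph{exactly} at the given $V$-transverse knot $K$; since $K$ is already $V$-transverse, this is a relative approximation (fix a neighborhood of $t=0$), and that should be said explicitly rather than folded into the general perturbation. Second, your endpoint step is close to circular as phrased: the sentence ``any two $V$-transverse curves that are $C^0$-close and lie in a common framed isotopy class must differ by a finite sequence of kink stabilizations'' is essentially the content of the ``in particular'' clause you are proving. The honest local argument is: once the $V$-transverse isotopy terminates at some $K''$ that is $C^0$-close to the $V$-transverse $K'$, work in a tubular neighborhood of $K'$ with $V=\partial/\partial z$; the horizontal projections of $K'$ and $K''$ are then regular closed plane curves with equal writhe (by framed isotopy), so by Whitney--Graustein they differ only in rotation number, and that difference is realized exactly by inserting or removing the kink pairs of Section~\ref{stab.sec}. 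This produces the integer $i$ without assuming the conclusion.
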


Moreover, $K$ is $V$-transversely homotopic to $K^i$ if and only if there is a framed self-homotopy of $K$ such that the Euler class of $V^\perp$ evaluated on the corresponding map of $S^1\times S^1\rightarrow M$ is $2i$, as we now recall. Let $\mathcal{C}$ denote a connected component of the space of immersed curves in $M$, containing a $V$-transverse knot $K$.  Next we recall the map $h_V:\pi_1(\mathcal{C},K)\rightarrow \mathbb{Z}$ from \cite{CC} which measures the failure of the $V$-transverse knots $K$ and $K^i$ to be $V$-transversely homotopic.  Let $[\alpha]\in  \pi_1(\mathcal{C},K)$ be a homotopy class, with $\alpha:S^1\rightarrow \mathcal{C}$.  This map can be regarded as a map of a torus $S^1\times S^1\rightarrow M$, also denoted $\alpha$. Recall that $e_{V^\perp}\in H^2(M;\mathbb{Z})$ denotes the Euler class of the 2-plane bundle $V^\perp$ on $M$. Then define

$$h_V([\alpha]):=\frac{1}{2}e_{V^\perp}(\alpha_*[S^1\times S^1]).$$

\begin{prop}\cite[Lemma 4.4]{CC}\label{hV_torus} Let $K$ be a $V$-transverse knot in $M$, and let $\mathcal{C}$  be the connected component of the space of immersed curves containing $K$.  Let $h_V$ be the kink-cancelling homomorphism defined above.  Then a framed self-homotopy representing a class $\alpha\in \pi_1(\mathcal{C},K)$ can be $C^0$-approximated by a $V$-transverse homotopy from $K$ to $K^i$ if and only if $h_V[\alpha]=i$. In particular, $\alpha$ can be a approximated by a $V$-transverse self-homotopy of $K$ if and only if $[\alpha]\in\text{ker }h_V.$
\end{prop}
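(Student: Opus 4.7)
The plan is to realize a framed self-homotopy as a map $\bar\alpha:T^2\to M$ and read off $e_{V^\perp}$ from the zero count of the curve-tangent direction projected to $V^\perp$. I would replace $\alpha$ by a generic smooth representative of $[\alpha]\in\pi_1(\mathcal{C},K)$, viewed as $\bar\alpha:S^1\times S^1\to M$ where the first factor parametrizes time $s$ and the second the curve parameter $t$. By general position, the section $s_\alpha(s,t):=\text{proj}_{V^\perp}(\partial\bar\alpha/\partial t)$ of $\bar\alpha^*V^\perp$ has only isolated, transverse zeros, and these occur precisely where $\alpha_s'(t)$ is parallel to $V$, i.e.\ where the homotopy momentarily fails to be $V$-transverse. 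Their signed count equals $\langle e(\bar\alpha^*V^\perp),[T^2]\rangle=e_{V^\perp}(\bar\alpha_*[T^2])=2\,h_V([\alpha])$.

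The forward direction is then a local-surgery argument. In a Darboux-style chart in which $V=\partial/\partial z$, at each isolated zero of $s_\alpha$ the family $\alpha_s$ momentarily has tangent along $\pm\partial/\partial z$ at one point. One can $C^0$-perturb $\alpha$ in a neighborhood of $(s_0,t_0)$ so that the family detours through a small kink that is $V$-transverse throughout and preserves the framing (a Reidemeister-I type modification done in a one-parameter family). The key bookkeeping is that each local modification deposits a definite contribution to the kink count of the terminal curve, with sign determined by the sign of the zero. After all modifications the resulting $V$-transverse family ends at $K^i$, where $i$ is the signed half-count of zeros, namely $h_V([\alpha])$. The \emph{in particular} statement is the case $i=0$.

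For the converse, given a $V$-transverse homotopy $\tilde\alpha$ from $K$ to $K^i$ that $C^0$-approximates $\alpha$, close up by concatenating $\tilde\alpha$ with the standard (framed but not $V$-transverse) kink-cancelling path from $K^i$ back to $K$; the result is a torus in $M$ whose homotopy class equals that of the original $\bar\alpha$. On this torus, $s_\alpha$ is nowhere zero on the $V$-transverse half and has exactly $2i$ signed zeros concentrated on the cancellation cylinder, one for each kink in the $i$ kink pairs. Hence $e_{V^\perp}(\bar\alpha_*[T^2])=2i$, so $h_V([\alpha])=i$.

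The main obstacle is the local model at a zero of $s_\alpha$. One must verify that the local detour is genuinely $C^0$-small and $V$-transverse, that it preserves the framing mandated by a framed homotopy, and — most importantly — that the signed count of kinks it deposits on the endpoint curve matches the sign of the zero. The factor $\tfrac12$ in the definition of $h_V$ reflects the fact that a kink pair in the sense of Section~\ref{stab.sec} contributes $+2$ to the Euler count on the closing torus (one zero of $s_\alpha$ per kink), so that a single unit of $h_V$ corresponds to a single kink pair on the endpoint.
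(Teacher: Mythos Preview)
The paper does not supply its own proof of this proposition: it is quoted verbatim from \cite[Lemma~4.4]{CC} and used as a black box. So there is no argument in the present paper to compare against; I can only comment on your sketch on its own terms.

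Your overall strategy---realize the self-homotopy as a torus map, take the section $s_\alpha=\mathrm{proj}_{V^\perp}(\partial\bar\alpha/\partial t)$ of $\bar\alpha^*V^\perp$, and read off $e_{V^\perp}(\bar\alpha_*[T^2])$ as the signed zero count---is exactly the right framework, and your converse paragraph is essentially a complete argument: close up the $V$-transverse homotopy by the local kink-cancelling isotopy (which can be taken $C^0$-small, hence null in $\pi_1(\mathcal C,K)$), and observe that all zeros of $s_\alpha$ live on the closing cylinder, two per kink pair.

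The forward direction as you have written it has a genuine gap. You say you ``$C^0$-perturb $\alpha$ in a neighborhood of $(s_0,t_0)$'' and then that ``each local modification deposits a definite contribution to the kink count of the terminal curve.'' These two claims are incompatible: a perturbation supported in a small disk around $(s_0,t_0)$ with $s_0<1$ leaves the terminal curve $\alpha_1=K$ untouched, so no kinks are deposited at the end. To make the surgery produce $K^i$ at $s=1$ you would need the modification to be local in $t$ but to persist for all $s\in[s_0,1]$, and then you must argue that kinks introduced at different zeros of opposite sign assemble, up to $V$-transverse isotopy, into the standard model $K^i$. That bookkeeping is nontrivial.

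The cleaner route---and the one that makes the ``if and only if'' transparent---is to separate existence from identification. First invoke the general $C^0$-approximation statement (the homotopy analogue of Proposition~\ref{approximate_framed.prop}): any framed self-homotopy of $K$ is $C^0$-close to \emph{some} $V$-transverse homotopy, necessarily ending at some $K^j$. Then run your converse argument to compute $j$: close up, count zeros, conclude $j=h_V[\alpha]$. This gives both directions at once and avoids the local-model difficulties you flag in your final paragraph.
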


In the next section, we will find a set of generators for $\pi_1(\mathcal{C},K)$ when $M$ is an $S^1$-bundle over an oriented surface of genus $\geq 2$.  We will then apply Proposition ~\ref{hV_torus} to find a corresponding generators for $\pi_1(\mathcal{C}_V,K)$, where $\mathcal{C}_V$ is the connected component of the space of $V$-transverse curves containing a given $V$-transverse curve $K$.

\begin{figure}
    \centering
\labellist
\small\hair 2 pt

\pinlabel $(+)$ at 130 250
\pinlabel $(-)$ at 430 250

   \endlabellist
    \includegraphics[width=0.5\linewidth]{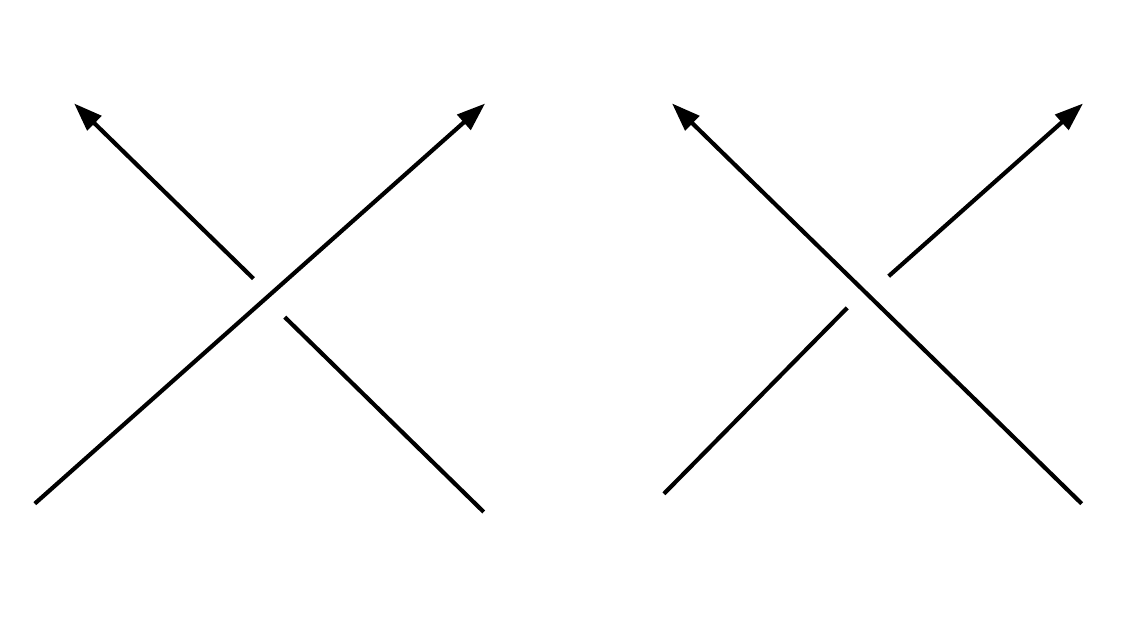}
    \caption{positive and negative crossing.}
    \label{fig:crossing}
\end{figure}

\begin{figure}
    \centering
\labellist
\small\hair 2 pt

\pinlabel $(1,-1)$ at 280 350
\pinlabel $(-1,1)$ at 560 350
\pinlabel $(1,1)$ at 280 190
\pinlabel $(-1,-1)$ at 580 190

   \endlabellist
    \includegraphics[width=0.5\linewidth]{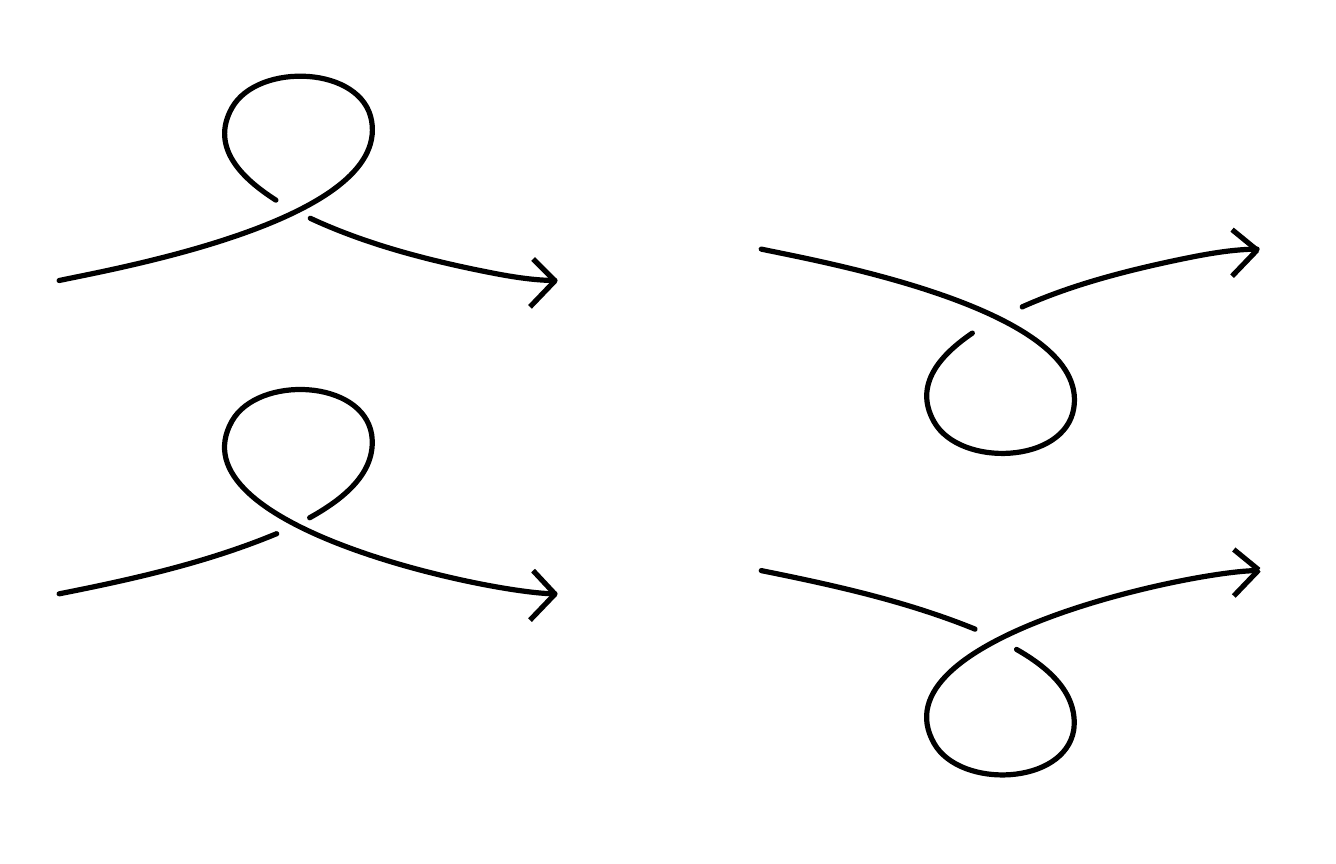}
    \caption{Four different types of kinks. The coordinates $(a,b)$ represent their contribution $a$ to a local rotation number and $b$ to a local writhe number of the diagram. Pairs of kinks with opposite local rotation number and opposite local writhe
number can be created or cancelled by a $V$-transverse isotopy as shown in \cite{CC}.}
    \label{fig:kinks}
\end{figure}

\begin{figure}
    \centering

    \includegraphics[width=0.5\linewidth]{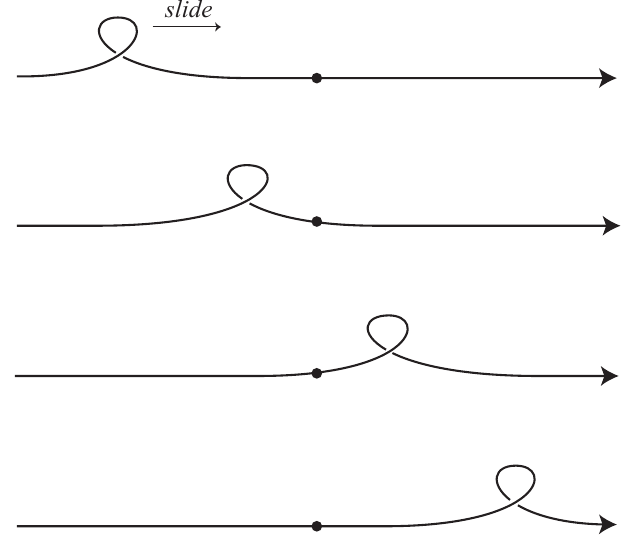}
    \caption{A loop in the space of $V$-transverse immersions corresponding to the $S^1$-fiber of $E_V$, created by sliding a small kink around a knot until it returns to its starting point.}
    \label{fig:kinkslide}
\end{figure}

\begin{figure}
    \centering
    \includegraphics[width=0.5\linewidth]{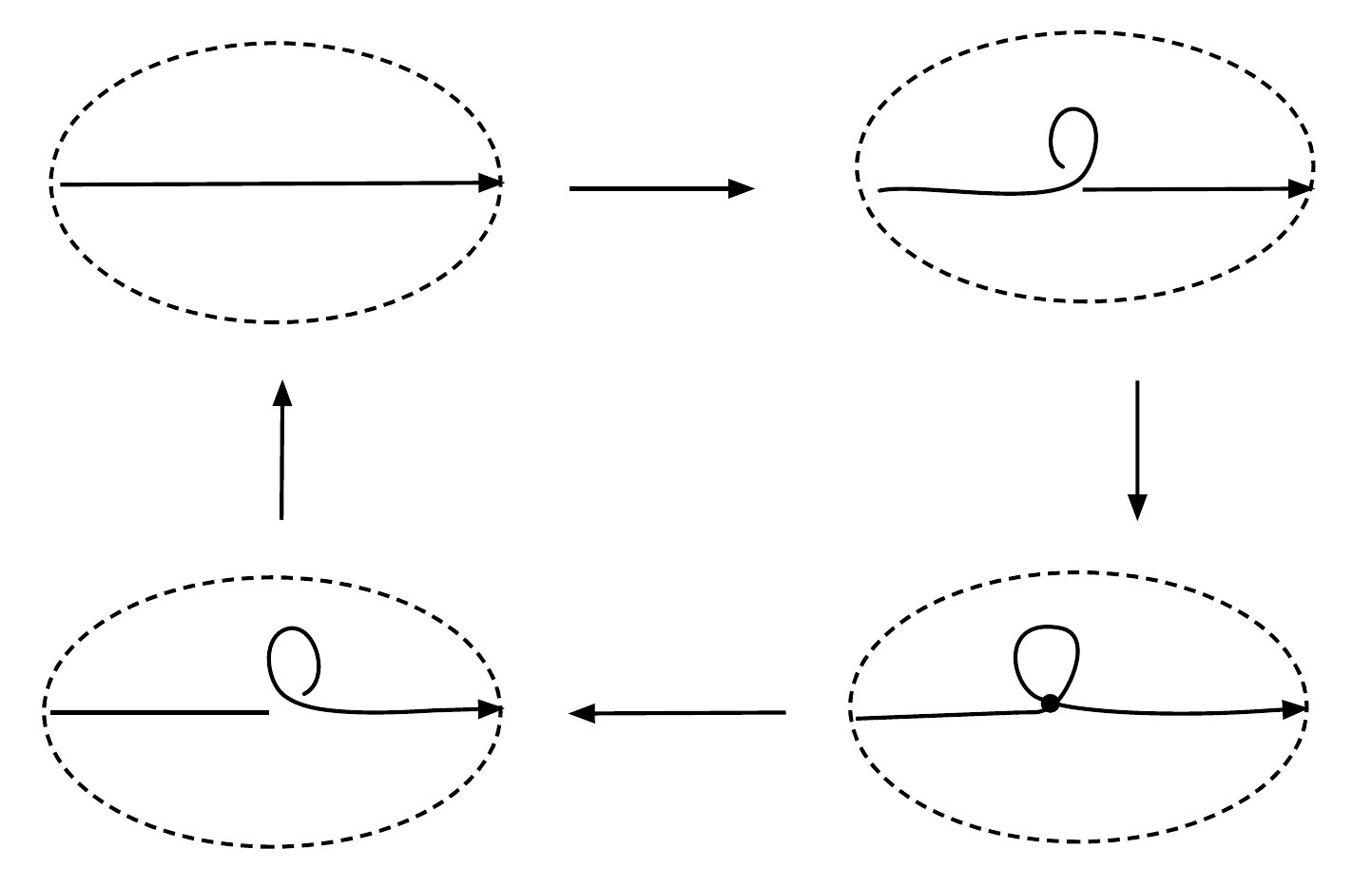}
    \caption{The loop $\gamma_s$.}
    \label{fig:loop}
\end{figure}

\subsection{Generators of $\pi_1$ of the spaces of immersed, framed, and $V$-transverse curves in an $S^1$-bundle over a surface}\label{generators.sec}

Recall that $\mathcal{C}$ denotes a path-component of the space of immersed curves in $M$.  We begin by describing two nontrivial elements of $\pi_1(\mathcal{C},K)$ for some immersed curve $K$ in an arbitrary 3-manifold $M$. The proof that these elements are indeed nontrivial is contained in the proof of Lemma~\ref{lem:Vtranscurveclassification}.
\begin{itemize}
\item Define $[\gamma_s]\in \pi_1(\mathcal{C},K)$ to be the class of a loop $\gamma_s$ which creates a small kink on an arc of $K$, passes through the corresponding double point to change the kink to one of opposite local writhe number, and then removes the kink. See Figure~\ref{fig:loop}. 

\item Given any immersed curve $K$, let $\gamma_{rot}$ be the loop in $\mathcal{C}$ which rotates $K$ along itself, induced by a full rotation of the parametrizing circle. 
\end{itemize}

Now we turn to the specific case where $M$ is an $S^1$-bundle over an oriented surface $F$, which will be the case in all of our examples in Section~\ref{examples.sec}, and define two additional elements of $\pi_1(\mathcal{C},K)$ in this setting. (In Lemma~\ref{lem:Vtranscurveclassification}, which is used for all the examples in Section~\ref{examples.sec}, we also assume the genus of $F$ is at least two.)

\begin{itemize}

\item If $K$ is a curve that lies in a torus neighborhood of an $S^1$-fiber $f$ of $M$ over a point $p\in F$ (and hence homotopic to a power $f^l$ of the fiber), and $\rho\in \pi_1(F,p)$, we let $\gamma_{\rho}$ be a loop in $\mathcal{C}$ that drags the torus neighborhood of $f$, and hence $K$, around a loop projecting to $\rho$. Note that $\gamma_{\rho}$ is only well-defined up to powers of $\gamma_{\text{rot}}$.  Given a basepoint $t_0\in S^1$, a specific choice of $\gamma_\rho$ can be made by specifying the trace of this basepoint as an element $\pi_1(M,K(t_0))$.

\item Finally, suppose $K$ is not homotopic to a power of the $S^1$ fiber $f$.  In this case we define $\gamma_{\text{fib}}$ to be a loop which simultaneously rotates each point of $K$ along the $S^1$-fiber of $M$ containing that point, in the direction of the orientation of the fiber. 
\end{itemize}

When $K$ is equipped with a framing, we can consider the connected component of the space of framed curves $\mathcal{C}_f$ containing $K$.  Let $\mathcal{C}_f'$ be a quotient of  $\mathcal{C}_f$, obtained by identifying pairs of framed curves whose underlying immersed curves are equal and whose framings are homotopic as nonzero sections of the induced normal bundle of $K$.  There is a covering map $j: \mathcal{C}_f'\rightarrow \mathcal{C}$ with discrete fiber \cite[Section 3.1]{chernov2005framed}.

When $K$ is framed, each of the above loops $\alpha$, with $[\alpha]\in \pi_1(\mathcal{C},K)$, then lifts to a path $\tilde{\alpha}$ in $\mathcal{C}_f'$ based at $K$.  Given any choice of framing on the basepoint curve $K$, the loop $\gamma_s$ (which is a self-homotopy, but not a self-isotopy of $K$) induces a full twist of the framing of $K$.  Hence $\gamma_s$ lifts to a path, but not a loop, in $\mathcal{C}_f'$. The main result of \cite{cahn2014number}, in the case where the ambient manifold is orientable, can be phrased as follows: If $\alpha$ is a self-isotopy of $K$ in $\mathcal{C}$, then the lift $\tilde\alpha$ is a loop in $\mathcal{C}_f$ if and only if $K$ does not intersect a non-separating 2-sphere in $M$.  When $M$ is an $S^1$-bundle over an orientable surface of genus $\geq 1$, no such $2$-sphere exists.  Therefore, $\gamma_\rho$, $\gamma_\text{rot}$, and $\gamma_\text{fib}$ all lift to loops in $\mathcal{C}_f$.  

When $K$ is not just framed, but $V$-transverse, we can then apply Proposition~\ref{hV_torus} to determine when each of these loops can be realized by a $V$-transverse self-homotopy (and in this case, self-isotopy) of $K$, and hence represent classes in $\pi_1(\mathcal{C}_V,K)$. Let $\iota:\mathcal{C}_V\rightarrow \mathcal{C}$ denote the inclusion of the component of the space of $V$-transverse curves containing $K$ into the component of the space of immersed curves containing $K$.

The final ingredient in the classification of $V$-transverse curves is a loop in $\pi_1(\mathcal{C}_V,K )$, shown in Figure~\ref{fig:kinkslide}, which is in $\text{ker }\iota_*$.   The loop $\gamma_{\text{kink}}$ is achieved by sliding one kink around the image of $K$, after creating a pair of opposite kinks (as shown in Figure \ref{fig:kinks}) if necessary, in order to create a local kink to slide.

We can now classify all loops in the space $V$-transverse curves in $M$.

\begin{lem}\label{lem:Vtranscurveclassification}  Let $K$ be a $V$-transverse knot in $M$, an $S^1$-bundle over an oriented surface $F$ of genus at least two.  Let $[d]\in H_1(M;\mathbb{Z})$ be the Poincar\'e dual of $e_{V^\perp}\in H^2(M;\mathbb{Z}).$  
\begin{enumerate}
\item If $K$ is not homotopic to a power of the vertical fiber, then every element of $\pi_1(\mathcal{C}_V,K)$ is of the form
$$\gamma_{\text{rot}}^a\gamma_{\text{fib}}^b\gamma_{\text{kink}}^c,$$
where $a, b\in \mathbb{Z}$, and the evaluation of $e_{V^\perp}$ on the torus corresponding to $\gamma_{\text{fib}}$ is $0$.
    \item If $K$ is homotopic to a power of the $S^1$-fiber of $M$, then every element of $\pi_1(\mathcal{C}_V,K)$ is of the form
$$ \phi\gamma_{\text{rot}}^a\gamma_{\rho}\gamma_{\text{kink}}^c\phi^{-1},$$
where $a, b\in \mathbb{Z}$, $\phi$ is a homotopy that carries $K$ into a torus neighborhood of a vertical fiber, and the evaluation of $e_{V^\perp}$ on the torus corresponding to $\gamma_{\rho}$ is $0$.

\end{enumerate}
    
\end{lem}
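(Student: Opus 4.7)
The plan is to classify $\pi_1(\mathcal C_V, K)$ by combining two pieces of data: the kink-cancelling homomorphism $h_V$ of Proposition~\ref{hV_torus}, which identifies $\iota_*\pi_1(\mathcal C_V,K)$ with $\ker h_V\subseteq \pi_1(\mathcal C,K)$, together with an analysis of $\ker\iota_*$, which will turn out to be generated by $\gamma_{\text{kink}}$. The argument proceeds in three stages: (a) produce generators for $\pi_1(\mathcal C, K)$; (b) compute $h_V$ on each to cut out $\ker h_V$; (c) show $\ker\iota_*$ is generated by $\gamma_{\text{kink}}$.

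For (a), I would invoke the Smale--Hirsch h-principle to identify the space of immersed curves with the free loop space of the sphere tangent bundle $STM$, which is an $S^2$-bundle over $M$. Because $M$ is aspherical (as an $S^1$-bundle over a surface of genus at least two), the long exact sequence of the fibration $S^2\to STM\to M$ gives $\pi_1(STM)\cong \pi_1(M)$ and $\pi_2(STM)\cong \mathbb Z$. Applying the evaluation fibration for the free loop space, together with a centralizer computation in $\pi_1(M)$, yields the generators. In case~(1), the assumption that $F$ has genus at least two makes the centralizer of a nontrivial element of $\pi_1(F)$ cyclic, so $Z_{\pi_1(M)}([K])\cong\mathbb Z^2$ is generated by the central fiber class and a primitive root of $[\bar K]$; geometrically this gives $\gamma_{\text{fib}}$ and a dragging loop reducible modulo $\gamma_{\text{fib}}$ to $\gamma_{\text{rot}}$. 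Together with the parametrization generator $\gamma_{\text{rot}}$ and the $\pi_2(STM)$-contribution $\gamma_s$, this gives a generating set $\{\gamma_{\text{rot}},\gamma_{\text{fib}},\gamma_s\}$ for $\pi_1(\mathcal C, K)$. In case~(2), the central fiber forces $Z_{\pi_1(M)}([K])=\pi_1(M)$, so after conjugating by a homotopy $\phi$ pushing $K$ into a fiber-torus neighborhood, the generators become $\gamma_{\text{rot}}$, $\gamma_\rho$ for $\rho\in\pi_1(F)$, and $\gamma_s$.

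For (b), the tori swept out by $\gamma_{\text{rot}}$ and $\gamma_s$ are nullhomotopic in $M$: the former maps into the image of $K$ and hence factors through a $1$-complex, while the latter is supported in a small ball. Thus $h_V$ vanishes on both. For $\gamma_{\text{fib}}$ and $\gamma_\rho$, the associated tori are precisely those named in the statement, and $h_V$ equals $\tfrac12 e_{V^\perp}$ of the corresponding class, yielding the stated Euler-class vanishing conditions. For (c), the loop $\gamma_{\text{kink}}$ is nullhomotopic in $\mathcal C$ because its underlying trace is a create--slide--cancel sequence, but it is nontrivial in $\mathcal C_V$: sliding a kink realizes the $S^1$-monodromy of the $S^2$-fiber of $STM$ under the Smale--Hirsch identification, and this generates $\ker\iota_*\cong\mathbb Z$. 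Moreover, the $V$-transverse lift of $\gamma_s$ coincides (up to sign) with $\gamma_{\text{kink}}$, so $\gamma_s$ is absorbed into the $\gamma_{\text{kink}}$ factor.

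Combining (a)--(c) yields the claimed normal forms in both cases. The main obstacle is the explicit geometric identification between the abstract Smale--Hirsch and centralizer generators on the one hand and the named geometric loops $\gamma_{\text{rot}}, \gamma_{\text{fib}}, \gamma_\rho, \gamma_{\text{kink}}$ on the other; in particular, one must verify that the $\pi_2(STM)$-generator is realized by $\gamma_{\text{kink}}$ in $\mathcal C_V$, so that $\gamma_s$ and $\gamma_{\text{kink}}$ indeed coincide as elements of $\pi_1(\mathcal C_V, K)$, and in case~(2) one must carefully track the conjugation by the homotopy $\phi$ used to push $K$ into a fiber-torus neighborhood.
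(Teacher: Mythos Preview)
Your overall strategy---h-principle to identify $\mathcal{C}\simeq\Omega STM$ and $\mathcal{C}_V\simeq\Omega E_V$, trace/centralizer analysis in $\pi_1(M)$ using that abelian subgroups of $\pi_1(F)$ are cyclic, then the $h_V$ condition from Proposition~\ref{hV_torus}---is exactly the paper's approach.

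There is, however, a genuine error in step (c). The loop $\gamma_s$ does \emph{not} lift to a loop in $\mathcal{C}_V$: it passes through a double point and changes the framing of $K$ by a full twist, so it lifts only to a path, not a loop, in the space of framed curves (and hence not to a loop in $\mathcal{C}_V$, since $V$-transverse curves carry a canonical framing). Conversely, $\gamma_{\text{kink}}$ maps to the \emph{trivial} element of $\pi_1(\mathcal{C},K)$, not to $\gamma_s$: its trace in $STM$ factors through the equatorial $S^1\subset S^2$ fiber via the inclusion $E_V\hookrightarrow STM$, which is nullhomotopic, and the resulting map $S^1\times S^1\to S^2$ has degree zero, so the $\pi_2(STM)$-obstruction vanishes as well. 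Thus $\gamma_s$ and $\gamma_{\text{kink}}$ are not related in the way you claim. The paper repairs this by inserting the intermediate space $\mathcal{C}_f'$ of framed curves: the covering $\mathcal{C}_f'\to\mathcal{C}$ filters out $\gamma_s$ (it fails to lift to a loop), leaving only $\gamma_{\text{rot}},\gamma_{\text{fib}},\gamma_\rho$; then the map $\mathcal{C}_V\to\mathcal{C}_f'$ has kernel $\langle\gamma_{\text{kink}}\rangle$ and image cut out by $h_V$, yielding the exact sequence
\[
1\longrightarrow\langle\gamma_{\text{kink}}\rangle\longrightarrow\pi_1(\mathcal{C}_V,K)\longrightarrow\pi_1(\mathcal{C},K)/\langle\gamma_s\rangle\xrightarrow{\ h_V\ }\mathbb{Z}.
\]

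A smaller issue in step (a), case (1): the centralizer $Z_{\pi_1(M)}([K])$ is generated by $[f]$ and a primitive root $\sigma$ of the projection $[\bar K]\in\pi_1(F)$, but the trace of $\gamma_{\text{rot}}$ is $[K]=\sigma^i[f]^j$, not $\sigma$. So $\gamma_{\text{rot}}$ and $\gamma_{\text{fib}}$ only hit the index-$|i|$ subgroup $\langle[K],[f]\rangle$ of the centralizer when $[\bar K]$ is imprimitive; there is no evident geometric loop in $\mathcal{C}$ with trace $\sigma$. The paper does not produce one either: it instead shows that for any $\alpha\in\pi_1(\mathcal{C},K)$ some \emph{power} $\alpha^i$ lies in the subgroup generated by $\gamma_{\text{rot}},\gamma_{\text{fib}},\gamma_s$, which is enough for the applications. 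Your assertion that $\{\gamma_{\text{rot}},\gamma_{\text{fib}},\gamma_s\}$ already generates $\pi_1(\mathcal{C},K)$ is too strong as stated.
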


\begin{rem}
    The final conditions on the Euler class of $V^\perp$ can be checked by counting intersection numbers of curves on $F$ with the projection to $F$ of the dual of $e_{V^\perp}$.
\end{rem}
\begin{proof}
Our strategy is to first classify all loops in a path-component $\mathcal{C}$ of the space of immersed curves in $M$, and from there derive a classification of loops in a path-component $\mathcal{C}_V$ of the space of $V$-transverse curves in $M.$
We first recall some facts about $\pi_1(\mathcal{C},K)$.  The $h$-principle for the space of immersions of $S^1\rightarrow M$ states that $\text{Imm}(S^1,M)$ is weak homotopy equivalent to the free loop space $\Omega STM$, where $STM$ is the spherical tangent bundle of $M$ \cite{gromov1986partial}. A map $\mathcal{C}\hookrightarrow \text{Imm}(S^1,M) \rightarrow \Omega STM$ is given by lifting an immersed curve $C$ via its unit tangent vector to a curve $\tilde C$ in $STM$.  Given a representative $a$ of a class $\alpha\in \pi_1(\mathcal{C},K)$, which we view as a map $a:S^1\times S^1\rightarrow M$, we similarly lift $a$ to a map $\tilde a:S^1\times S^1\rightarrow STM$, representing a class $\tilde\alpha\in\pi_1(\Omega STM,\tilde K)$. 
Choose a basepoint $t_0\in S^1$.  Given a loop $\tilde a$ representing $\tilde \alpha\in \pi_1(\Omega STM,\tilde{K})$, we let $t(\tilde \alpha)\in \pi_1(STM,\tilde{K}(t_0))$ denote the class containing the trace of the basepoint $\tilde{K}(t_0)$ under $\tilde{a}$. The obstruction for maps $\tilde a_1$ and $\tilde a_2:S^1\times S^1\rightarrow STM$, with $\tilde a_1(t\times 0)=\tilde a_2(t\times 0)=\tilde K (t)$, and with the same trace, to be homotopic, is an element of $\pi_2(STM).$ Since $M$ is an oriented 3-manifold, it is parallelizable, so $\pi_2(STM)=\pi_2(S^2)\oplus \pi_2(M)=\mathbb{Z}\oplus \pi_2(M)$. Note that in our setting, where $M$ is an $S^1$ bundle over an oriented surface of genus at least 2, $\pi_2(M)=0$. The generator of the $\pi_2(S^2)=\mathbb{Z}$ summand corresponds to the loop $\gamma_s\in \pi_1(\mathcal{C},\tilde K)$ defined in Section ~\ref{generators.sec}.

It follows that if $\alpha_1,\alpha_2\in \pi_1(\mathcal{C}, K)$ have the same trace (where the trace is considered as an element of $\pi_1(STM)$), then $\alpha_2=\alpha_1 \gamma_s^m$; see \cite[Lemma 3.6]{chernov2005framed}.  We next show that for any $\alpha\in \pi_1(\mathcal{C},K)$, there exists $i\in \mathbb{Z}$ such that
\begin{equation}\label{immersed_curves.eq}
\alpha^i=\begin{cases}
 \gamma_{\text{rot}}^a \gamma_{\text{fib}}^b\gamma_s^{m} &\text{ if } [K]\neq [f^j]\in\pi_1(M)\\
  \phi\gamma_{\text{rot}}^a  \gamma_\rho  \gamma_s^m \phi^{-1}& \text{ if } [K]=[f^j]\in\pi_1(M)\\
\end{cases}
\end{equation}
where $f$ is the $S^1$ fiber of $M.$

  Given $\alpha\in \pi_1(\mathcal{C},K)$, consider its lift $\tilde \alpha\in \pi_1(\Omega STM,\tilde{K})$, and the subgroup $\tilde{A}=\langle [\tilde{K}],[\text{tr}(\tilde{\alpha})]\rangle \leq \pi_1(STM, \tilde{K}(t_0)).$  Note that since $M$ is parallelizable, $\text{pr}_*$ induces an isomorphism $\text{pr}_*:\pi_1(STM, \tilde{K}(t_0))\rightarrow \pi(M,K(t_0))$, so we consider the image $A=\text{pr}_*(\tilde A)$ of $\tilde A$, namely, the group  $A=\langle [{K}],[\text{tr}({\alpha})]\rangle  \leq \pi_1(M,K(t_0))$.  As $\tilde \alpha:S^1\times S^1\rightarrow STM$, $A$ is an abelian subgroup of $\pi_1(M,K(t_0))$.  \\
Let $\text{pr}':M\rightarrow F$ be the bundle map realizing $M$ as an $S^1$-fibration over $F$, with $S^1$-fiber $f$.  We now have an abelian subgroup $\text{pr}_*'(A)\leq \pi_1(F,p)$, which, since the genus of $F$ is at least two, lies in a unique infinite cyclic subgroup \cite[Theorem 3.2]{doCarmo1992}.  Choose a generator $s$ of this subgroup and choose $\sigma\in \pi_1(M,K(t_0))$ such that $\text{pr}_*'(\sigma)=s$.  The long exact sequence of the fibration $\text{pr}':M\rightarrow F$ yields that two such choices of $\sigma$ differ by a power of $[f]$, and for any such choice of $\sigma$, $A=\langle [K],[\text{tr}(\alpha)]\rangle\leq \langle [f],\sigma\rangle.$ Hence we can write $K=\sigma^i[f]^j$ and $\text{tr}(\alpha)=\sigma^k[f]^l.$\\
Suppose $i\neq 0$, putting us in the case $[K]\neq [f^j]\in\pi_1(M).$
We have
$$\text{tr}(\alpha^i)=\text{tr}(\alpha)^i=\sigma^{ki}[f]^{li}=(\sigma^i)^k([f]^j)^{il-jk}.$$
Then set $a=l$ and $b=il-jk.$ This gives $[\text{tr}(\alpha)]^i=[K]^a[f]^b$, which is also the trace of $\gamma_\text{rot}^a\gamma_\text{fib}^b.$ Hence $\alpha^i=\gamma_\text{rot}^a\gamma_\text{fib}^b\gamma_s^m$ for some $m\in\mathbb{Z}.$\\
Now suppose $i=0$, so that $[K]=[f^j]\in\pi_1(M).$ By pre-composing with a homotopy $\phi$ of $K$, and post-composing by $\phi^{-1}$, we may assume $K$ lies in a torus neighborhood of a vertical $S^1$-fiber of $M$. Since $K$ lies in a torus neighborhood of an $S^1$-fiber, we can apply the homotopy $\gamma_{\rho'}$ to $K$, with $\rho'=\text{pr}_*'(\sigma)$.  Recall that $\gamma_{\rho'}$ is well-defined up to powers of $\gamma_{\text{rot}},$ and  $\text{tr}(\gamma_{\text{rot}})=[K]=[f]^j.$ For $\rho=\text{pr}_*(\sigma)$, we can write $\text{tr}(\gamma_{\rho'})=\sigma[f]^{jn}$ for some $n\in\mathbb{Z}$.  Let $\sigma'=\sigma[f]^{jn}$, so that $\text{tr}(\gamma_{\rho})=\sigma'$. \\
We now consider
$$\text{tr}(\alpha^{j})=\text{tr}(\alpha)^{j}=\sigma^{kj}[f]^{lj}=(\sigma')^{kj}([f]^{-jn})^{kj}([f]^j)^{l}=(\sigma')^{kj}([f]^j)^{l-jkn}.$$
Then set $a=l-jkn$ and $c=kj$.  This gives $[\text{tr}(\alpha)]^j=(\sigma')^c[K]^a$, which is also the trace of $(\gamma_{\text{rot}})^a\gamma_{\rho'}^{c}$ when $\rho'=\sigma'$. Hence $\alpha^j=(\gamma_{\text{rot}})^a\gamma_{\rho'}^{c}\gamma_s^m$ for some $m\in \mathbb{Z}$.  By taking $\rho=(\rho')^c$, we can choose $\gamma_\rho$ so that  $\alpha^j=(\gamma_{\text{rot}})^a\gamma_{\rho}\gamma_s^m$.

Recall the quotient $\mathcal{C}_f'$ of the component of the space of framed curves containing $K$, defined in Section~\ref{generators.sec} above, is a discrete fiber cover of $\mathcal{C}$, and that of the loops $\gamma_{\text{rot}}$,
$\gamma_{\text{fib}}$,$\gamma_{\rho}$, and $\gamma_s$ above, each lifts to a loop in 
 $\pi_1(\mathcal{C}_f',K)$ except $\gamma_s$. (To simplify notation, we denote the lift  $\tilde\gamma\in \pi_1(\mathcal{C}_f',K)$ of each $\gamma\in\pi_1(\mathcal{C},K)$ by $\gamma$, rather than $\tilde\gamma$.)  

Hence for any $\alpha\in\pi_1(\mathcal{C}_f',K)$,  there exists $i\in \mathbb{Z}$ such that

\begin{equation}\label{framed_curves.eq}
\alpha^i=\begin{cases}
 \gamma_{\text{rot}}^a \gamma_{\text{fib}}^b &\text{ if } [K]\neq [f^j]\in\pi_1(M)\\
  \phi\gamma_{\text{rot}}^a  \gamma_\rho   \phi^{-1}& \text{ if } [K]=[f^j]\in\pi_1(M)\\
\end{cases}
\end{equation}

Finally, we derive a statement analogous to Equation~\ref{framed_curves.eq} for $V$-transverse curves.

Given a nowhere-vanishing vector field $V$ on $M$, we can consider both the corresponding $2$-plane bundle on $M$ determined by $V^\perp$, as well as its unit sub-bundle $E_V$.   Every $V$-transverse knot $K$ in $M$ has a lift $\tilde{K}$ to $E_V$ determined by the projection of its tangent vector to $V^\perp$. The space of $V$-transverse immersions $\mathcal{C}_V$ of $S^1\rightarrow M$ is weak homotopy equivalent to the free loop space $\Omega E_V$, since the differential relation of $V$-transversality is ample \cite[Theorem 18.4.1]{eliashberg2002introduction}.  Therefore, to classify loops in $\alpha \in \pi_1(\mathcal{C}_V,K)$, it suffices to classify their lifts $\tilde \alpha \in \pi_1(\Omega E_V,\tilde{K})$.  

As in the immersed case, given a class $\tilde \alpha\in \pi_1(\Omega E_V,\tilde{K})$, we let $t(\tilde \alpha)\in \pi_1(E_V,\tilde{K}(t_0))$ denote the class of the trace of the basepoint $\tilde{K}(t_0)$ during a self-homotopy  of $\tilde{K}$ representing $\tilde \alpha$.  Given $\alpha_1,\alpha_2\in \pi_1(\Omega E_V)$ with $t(\alpha_1)=t(\alpha_2)$, the obstruction for $\alpha_1$ and $\alpha_2$ to be equal in $\pi_1(\Omega E_V)$ is an element of $\pi_2(E_V)$.  Since $\pi_2(M)=1$, and $E_V$ is an $S^1$-bundle over $M$, the long exact sequence of the fibration gives $\pi_2(E_V)=1$ as well.  Hence elements of $\pi_1(\Omega E_V)$ are completely determined by their trace in $\pi_1(E_V)$.  Let $\text{pr}:E_V\rightarrow M$ denote the projection map.  We have an exact sequence $1\rightarrow \pi_1(S^1)\rightarrow \pi_1(E_V)\rightarrow \pi_1(M)\rightarrow 1$, from which it follows that every element $\beta\in \pi_1(E_V)$ is of the form $\gamma g^k$ for $\gamma\in \pi_1(M)$, where $g$ is a generator of $\pi_1(E_V)$ of corresponding to the $S^1$-fiber of $\text{pr}:E_V\rightarrow M$.  An element $\gamma_{\text{kink}}\in \pi_1(\Omega E_V)$ with $t(\gamma_{\text{kink}})=g$ is shown in Figure~\ref{fig:kinkslide}.  In other words, the loop $\gamma_{\text{kink}}$ is achieved by sliding one kink around the image of $K$, after creating a pair of opposite kinks (as shown in Figure \ref{fig:kinks}) if necessary, in order to create a local kink to slide. 

  Next, consider the map $\iota:\mathcal{C}_V\rightarrow \mathcal{C}_f'$,given by composing the inclusion $\mathcal{C}_V\hookrightarrow \mathcal{C}_f$ and the quotient map $\mathcal{C}_f\rightarrow \mathcal{C}_f'$.  We have an induced map $\iota_*:\pi_1(\mathcal{C}_V,K)\rightarrow\pi_1( \mathcal{C},K)$, where $K$ is $V$-transverse.  Since the trace of $\gamma^{kink}$ is the $S^1$-fiber of $E_V$, we have that $\text{ker }\iota_*$ is the infinite cyclic group generated by $\gamma_{\text{kink}}$.  By Proposition~\ref{hV_torus}, we then have an exact sequence of groups 

  \begin{equation}\label{immersed_to_Vtrans_seq.eq}1\rightarrow \langle \gamma_{\text{kink}}\rangle \rightarrow \pi_1(\mathcal{C}_V,K)\rightarrow \pi_1(\mathcal{C},K)/\langle \gamma_s\rangle \xrightarrow{h_V} \langle \mathbb{Z},+\rangle\rightarrow 1.
  \end{equation}

  The result follows from combining Equations \ref{framed_curves.eq} and \ref{immersed_to_Vtrans_seq.eq} and Proposition ~\ref{hV_torus}.\end{proof}

\section{A Finite-Type Invariant of $V$-transverse links}\label{invariant.sec}

\begin{defn} A figure-8 in a manifold $X$ is a map $S^1\vee S^1\rightarrow X.$
    An {\it immersed figure-8} in a 3-manifold $M$ is determined by pair of immersions $A,B:S^1\rightarrow M$ with a distinguished common point $p\in \text{im } A \cap \text{im } B$.  An immersed figure-8 is thus described by a map $A\bullet_p B :S^1\vee S^1 \rightarrow M$, where each component $A$ and $B$ is immersed. We call $A\bullet_p B$ a {\it $V$-transverse figure-8} in $(M,V)$ if each of the components $A$ and $B$ are $V$-transverse.
\end{defn}

We define a function on certain generic homotopies of 2-component $V$-transverse links.  Under certain conditions on the homotopy classes of the link components, this function can be used to define a finite-type invariant of $V$-transverse links analogous to invariants of smooth knots and links defined in \cite{Vassiliev}, \cite{Kirk-Livingston}, and \cite{kalfagianni1998finite}. 

Let $H$ be a generic $V$-transverse homotopy from $L=K_1\cup K_2$ to $L'=K_1'\cup K_2'$. By generic, we mean there exist finitely many times $t_1,\dots t_k$ where $H_t(L)$ is not an embedding, and at each $t_i$, $H_t(L)$ has a single transverse double point $p_i$.  Denote the set of such $p_i$ consisting of double points between {\it distinct} components of $H_t(K_1)$ and $H_t(K_2)$ by $D_{12}$.  Denote the sign of the resolution of such a double point by $\epsilon(t_i)$, where $\epsilon(t_i)$ is positive (resp. negative) if we pass from a negative (resp. positive) crossing to a positive (resp. negative) crossing, where positive and negative crossings are modeled locally as in Figure \ref{fig:crossing}.

\begin{defn}\label{invariant.def}Let $H$ be a generic $V$-transverse homotopy from  $L=K_1\cup K_2$ to $L'=K_1'\cup K_2'$, with double point set $D_{12}$ as defined above.  We define $\nu(H)$ as follows:
    $$\nu(H)=\sum_{p_i \in D_{12}} \epsilon(t_i) [H_t(K_1)\bullet_{p_i} H_t(K_2)] . $$
\end{defn}

The equivalence relation on $V$-transverse figure-8's is as follows.  Given $A\bullet_p B$ and $C\bullet_p D$, we have  $[A\bullet_p B]=[C\bullet_p D]$ if there is a homotopy through $V$-transverse figure 8's carrying $A\bullet_p B$ to $C\bullet_p D$. In the following paragraphs, we formulate the existance of such a homotopy in terms of a computable algebraic criterion.

Recall from the proof of Lemma~\ref{lem:Vtranscurveclassification} that $E_V$ denotes the unit sub-bundle of $V^\perp$, and hence determines an $S^1$-bundle on $M.$  Given a $V$-transverse immersed curve $C$, we denote its lift to $E_V$, determined by projecting its tangent vector to $V^\perp$ at each point, by $\tilde C.$

If $A$ and $B$ intersect at $p$, their lifts $\tilde{A}$ and $\tilde{B}$ generally won't intersect in $E_V$, but  we can construct a corresponding (topological) figure-8 in $E_V$ as follows.  Let $q_A$ denote the lift to $E_V$ of the tangent vector $A'$ of $A$ at $p$ , and similarly let $q_B$ denote the lift of $B'$ at $p$ to $E_V$.  Let $\delta_{AB}$ be a path from $q_A$ to $q_B$ that follows the orientation of the $S^1$ fiber of $E_V$.  Then form a figure-8 in $E_V$, $\tilde{A}\bullet_{q_A}\delta_{AB}\tilde{B}\delta_{AB}^{-1}$.  Note that the choice of path from $q_A$ to $q_B$ in the fiber does not matter, because the class of the fiber is in the center of $\pi_1(E_V)$.

\begin{prop} \label{term_cancellation.prop} Let $A \bullet_p B$ and $C\bullet_p D$ be $V$-transverse figure 8's in $(M,V)$ such that the tangent vectors of $A$ and $C$ agree at $p$, and such that the tangent vectors of $B$ and $D$ agree at $p$.
Further, suppose $A \bullet_p B$ and $C\bullet_p D$ are homotopic through $V$-transverse figure-8's. Then there exists $\gamma\in \pi_1(E_V,q_A)$ such that $\gamma \tilde A \gamma ^{-1}=\tilde C $ and  $\gamma  \delta_{AB}\tilde{B}\delta_{AB}^{-1}\gamma^{-1} =\delta_{AB}\tilde{D}\delta_{AB}^{-1} $.
\end{prop}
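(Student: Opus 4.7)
The plan is to distill the given homotopy through $V$-transverse figure-$8$'s into three continuous paths in $E_V$, apply the free-homotopy change-of-basepoint principle to each branch separately, and then glue the two resulting conjugation identities together via the central fiber of $E_V\to M$. First, from the homotopy $H\colon(S^1\vee S^1)\times I\to M$, I would record the trace $\eta(t)=H(*,t)$ of the wedge point in $M$ together with the projections to $E_V$ of the tangent vectors of the two branches at $\eta(t)$. These projections make sense at every $t$ by $V$-transversality, and the hypothesis on the tangent vectors at $p$ forces them to close up into loops $\gamma\in\pi_1(E_V,q_A)$ and $\gamma'\in\pi_1(E_V,q_B)$, both projecting to the same loop $\eta$ in $M$.

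Second, I would restrict $H$ to each branch individually. The restriction to the first branch is a $V$-transverse self-homotopy from $A$ to $C$, which lifts via the tangent projection to a free homotopy in $E_V$ between $\tilde A$ and $\tilde C$ whose basepoint trace is precisely $\gamma$. The standard principle that a free homotopy of loops conjugates the based classes by the basepoint trace then gives $\tilde C=\gamma\tilde A\gamma^{-1}$ in $\pi_1(E_V,q_A)$, which is already the first equation of the proposition. The analogous argument on the second branch yields $\tilde D=\gamma'\tilde B\gamma'^{-1}$ in $\pi_1(E_V,q_B)$.

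Third, I would relate the two traces $\gamma$ and $\gamma'$ through the $S^1$-fibers of $E_V\to M$. Choose a continuous family of fiber paths $\delta_{AB}(t)$ from $\gamma(t)$ to $\gamma'(t)$, all following the fiber orientation, with $\delta_{AB}(0)=\delta_{AB}$. This family assembles into a map of a square $\Phi\colon I\times I\to E_V$ whose boundary loop, based at $q_A$, reads $\delta_{AB}\cdot\gamma'\cdot\delta_{AB}(1)^{-1}\cdot\gamma^{-1}$. Contractibility of the square makes this loop null-homotopic in $E_V$, so $\gamma=\delta_{AB}\cdot\gamma'\cdot\delta_{AB}(1)^{-1}$ in $\pi_1(E_V,q_A)$.

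Finally, since $V^\perp$ is an oriented $2$-plane bundle on the oriented manifold $M$, the circle bundle $E_V\to M$ is oriented, and its fiber class $f$ is central in $\pi_1(E_V,q_A)$. The two paths $\delta_{AB}$ and $\delta_{AB}(1)$ both run from $q_A$ to $q_B$ in a single fiber and follow the orientation, so they differ by a power of $f$; hence $\gamma=\delta_{AB}\,\gamma'\,\delta_{AB}^{-1}\cdot f^k$ for some $k\in\mathbb{Z}$. Substituting into $\gamma(\delta_{AB}\tilde B\delta_{AB}^{-1})\gamma^{-1}$ and commuting $f^{\pm k}$ across $\delta_{AB}\tilde B\delta_{AB}^{-1}$ by centrality collapses the expression to $\delta_{AB}\gamma'\tilde B\gamma'^{-1}\delta_{AB}^{-1}=\delta_{AB}\tilde D\delta_{AB}^{-1}$, giving the second identity. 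The principal subtlety, and the main place where care is needed, is exactly this bookkeeping: the fiber paths $\delta_{AB}(0)$ and $\delta_{AB}(1)$ are \emph{not} literally equal, and only the centrality of the fiber class of an oriented circle bundle makes their discrepancy cancel.
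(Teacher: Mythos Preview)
The paper does not supply a proof of this proposition; it is stated immediately after the paragraph constructing the lifted figure-$8$ $\tilde{A}\bullet_{q_A}\delta_{AB}\tilde{B}\delta_{AB}^{-1}$ in $E_V$ and noting that the fiber class is central in $\pi_1(E_V)$, and is evidently regarded as a direct consequence of that setup. Your argument is correct and fills in precisely the details the paper leaves implicit: lifting each branch of the homotopy to $E_V$ gives free homotopies $\tilde A\simeq\tilde C$ and $\tilde B\simeq\tilde D$ whose basepoint traces $\gamma,\gamma'$ both project to the trace $\eta$ of the wedge point in $M$; the first identity is then the standard change-of-basepoint formula; and the second follows once one knows that $\gamma$ and $\delta_{AB}\gamma'\delta_{AB}^{-1}$ differ by a power of the central fiber class.

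Your square construction for this last step is fine, though one could equally invoke the exact sequence $\pi_1(S^1)\to\pi_1(E_V)\to\pi_1(M)$ directly: both $\gamma$ and $\delta_{AB}\gamma'\delta_{AB}^{-1}$ are loops at $q_A$ mapping to $[\eta]\in\pi_1(M,p)$, so their quotient lies in the image of $\pi_1(S^1)$. The one place your phrasing could be tightened is the requirement that each $\delta_{AB}(t)$ ``follow the fiber orientation'': if $\gamma(t)=\gamma'(t)$ for some $t$ this prescription is ambiguous, but all you actually need is \emph{some} continuous family of fiber paths with the given endpoints and $\delta_{AB}(0)=\delta_{AB}$, and this exists because the pullback of $E_V$ along $\eta$ is a trivial $S^1$-bundle over $I$.
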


The following proposition shows how to use $\nu$ to obstruct the existence of a $V$-transverse link-homotopy between two $V$-transverse links $L$ and $L'.$  Note that an operation similar to $\nu$ can be defined for individual components of a link, as a sum of double points over self-intersections of components during a generic homotopy.  Such operations can be used to obstruct the existence of a component-wise $V$-transverse isotopy.  Both types of operations can also obstruct the existence of a $V$-transverse link isotopy.  Proposition~\ref{nu_invariant.prop} describes a necessary condition for using $\nu$ as such an obstruction.

\begin{prop}\label{nu_invariant.prop} Suppose that $\nu(H_1 \sqcup H_2)=0$ for all $V$-transverse self-homotopies $H_1\sqcup H_2$ of a link $L=K_1\sqcup K_2$ with components in  the $V$-transverse homotopy classes $\mathcal{C}_{V,1}$ and $\mathcal{C}_{V,2}$.  Then for any two $V$-transverse links $L=K_1\sqcup K_2$ and $L'=K_1' \sqcup K_2'$ that (1) have components in $\mathcal{C}_{V,1}$ and $\mathcal{C}_{V,2}$ and (2) are link-homotopic as $V$-transverse links,  $\nu(H_1' \sqcup H_2')=0$ for any $V$-transverse homotopy $H_1' \sqcup H_2'$ from $L$ to $L'.$ 
\end{prop}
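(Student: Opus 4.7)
The strategy is to exploit additivity of $\nu$ under concatenation of homotopies and sign-reversal under time-reversal, together with the trivial observation that a link-homotopy contributes nothing to $\nu$. First I would verify the two formal properties of $\nu$. For additivity: given composable generic $V$-transverse homotopies $H$ and $H'$, a small perturbation near the gluing time makes the concatenation $H \cdot H'$ generic, and its mixed double points are the disjoint union of those of $H$ and of $H'$, each with its original sign and figure-8 class; therefore $\nu(H \cdot H') = \nu(H) + \nu(H')$ in the free abelian group on $V$-transverse figure-8 equivalence classes. For sign-reversal: the reverse homotopy $H^{-1}$ has the same mixed double points as $H$ with the same figure-8 classes, but the sign of each resolution flips (since a negative-to-positive crossing transition becomes positive-to-negative when time is reversed), giving $\nu(H^{-1}) = -\nu(H)$. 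Both assertions follow directly from Definition~\ref{invariant.def}.

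With these in hand, the proposition is almost immediate. Let $H_{lh}$ be a $V$-transverse link-homotopy from $L$ to $L'$, which exists by hypothesis (2). Since link-homotopies have no double points between distinct components, $\nu(H_{lh}) = 0$ trivially. Now let $H_1' \sqcup H_2'$ be any $V$-transverse homotopy from $L$ to $L'$. The concatenation $(H_1' \sqcup H_2') \cdot H_{lh}^{-1}$, perturbed to be generic, is a $V$-transverse self-homotopy of $L$. Because every intermediate multi-curve is $V$-transverse, its components remain in the path-components $\mathcal{C}_{V,1}$ and $\mathcal{C}_{V,2}$ throughout. The hypothesis of the proposition therefore applies and yields $\nu\bigl((H_1' \sqcup H_2') \cdot H_{lh}^{-1}\bigr) = 0$. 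Combining additivity and sign-reversal then gives
\[
\nu(H_1' \sqcup H_2') - \nu(H_{lh}) \;=\; 0,
\]
and since $\nu(H_{lh}) = 0$ we conclude $\nu(H_1' \sqcup H_2') = 0$, as desired.

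The only potential obstacle is formal: one must ensure the concatenation can be made generic by a small perturbation, and that this perturbation does not introduce uncontrolled mixed double points. This is a standard transversality argument supported in a small neighborhood of the gluing time, and does not affect the sum defining $\nu$ on either half. The substantive ingredient is really Proposition~\ref{term_cancellation.prop}, which guarantees that the figure-8 classes appearing as coefficients of $\nu$ are well-defined on a suitable set of equivalence classes, so that the abelian-group arithmetic carrying out the cancellation above is meaningful.
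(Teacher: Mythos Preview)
Your proof is correct and follows essentially the same approach as the paper: concatenate the given homotopy with the reverse of a $V$-transverse link-homotopy to obtain a self-homotopy of $L$, then invoke the hypothesis. Your treatment is more explicit about the additivity and sign-reversal properties that the paper leaves implicit; the closing remark about Proposition~\ref{term_cancellation.prop} is unnecessary here, since the cancellation is purely formal in the free abelian group on figure-8 classes and does not require any nontrivial identification of terms.
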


\begin{proof} Let $L$, $L'$ be $V$-transverse links with components in $\mathcal{C}_{V,1}$ and $\mathcal{C}_{V,2}$ be link-homotopic as $V$-transverse links. Suppose  $\nu(H_1 \sqcup H_2)=0$ for all $V$-transverse self-homotopies $H_1\sqcup H_2$ of $L$.  Let $H_1' \sqcup H_2'$ be a $V$-transverse homotopy from $L$ to $L'$.  Let $H_1'' \sqcup H_2''$ be a $V$-transverse link-homotopy from $L$ to $L'$.  It follows directly from the definition of $\nu$ that $\nu(H_1'' \sqcup H_2'')=0.$  Concatenating $H_1' \sqcup H_2'$ with the reverse of $H_1'' \sqcup H_2''$ gives the desired result.\end{proof}

In Section~\ref{examples.sec} we characterize certain $V$-transverse homotopy classes in an $S^1$-bundle over an oriented surface for which the hypotheses of Proposition~\ref{nu_invariant.prop} hold, meaning $\nu$ can be used as an invariant.

\section{Examples}\label{examples.sec}

Throughout this section, $M$ is an $S^1$-bundle over an oriented surface $F$ of genus $\geq 2$.  Our goal in this section is to construct examples of $V$-transverse links in $M$ which are isotopic as framed links, homotopic as $V$-transverse immersed curves, and not isotopic as $V$-transverse knots.  In other words, these framed link types are non-simple in the $V$-transverse category.  Furthermore, the various link types represent various types of non-simplicity phenomena, as summarized in table ~\ref{summary.tab}.

In each example, both components of the link are homotopic to the $S^1$-fiber of $M$.  We begin by showing that the operation $\nu$ defined in Section ~\ref{invariant.sec} can be used as an invariant in this case. In particular, by Proposition~\ref{nu_invariant.prop}, Proposition~\ref{well_defined_both_fiber.prop} implies that the value of $\nu$ is independent of the choice of $V$-transverse homotopy between pairs of links with components homotopic to the fiber, and hence can be used to obstruct the existence of a $V$-transverse link homotopy between such links. We note that the same proof works when the components are homotopic to nontrivial powers of the fiber.

\begin{prop} \label{well_defined_both_fiber.prop}Let $M$ be an $S^1$-bundle over an oriented surface $F$, and $V$ be a nowhere zero vector field on $M$.  Let $K_1\sqcup K_2$ be a $V$-transverse link.  If $K_1$ and $K_2$ are both homotopic to the $S^1$-fiber $f$ of $M$, then $\nu(H_1\sqcup H_2)=0$ for all $V$-transverse self-homotopies of $K_1\sqcup K_2$.\end{prop}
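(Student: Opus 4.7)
The plan is to reduce to checking that $\nu$ vanishes on a generating set for self-homotopies of $L = K_1 \sqcup K_2$. Since $\nu$ is additive under concatenation of homotopies, and any $V$-transverse self-homotopy of $L$ is homotopic (inside the space of $V$-transverse multi-curves) to a concatenation of self-homotopies that move only one component at a time, it suffices to handle self-homotopies moving only $K_1$ (with $K_2$ fixed), and symmetrically only $K_2$.

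First, I preliminarily $V$-transversely isotope $L$ so that each $K_i$ lies in a small tubular neighborhood $N_i$ of a vertical fiber over a point $p_i \in F$, with $N_1 \cap N_2 = \emptyset$. This is possible by Proposition~\ref{approximate_framed.prop}, and such an isotopy contributes nothing to $\nu$ (no inter-component double points occur). By Lemma~\ref{lem:Vtranscurveclassification}(2), $\pi_1$ of the path-component of $\mathcal{C}_V$ containing $K_i$ is then generated by loops of the form $\gamma_{\text{rot}}^{(i)}$, $\gamma_\rho^{(i)}$ for $\rho \in \pi_1(F,p_i)$, and $\gamma_{\text{kink}}^{(i)}$.

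Next, I check $\nu = 0$ on each generator moving only $K_1$. The loop $\gamma_{\text{rot}}^{(1)}$ only reparameterizes $K_1$, so its image is unchanged and the set $D_{12}$ is empty. The loop $\gamma_{\text{kink}}^{(1)}$ is supported in a tubular neighborhood of $K_1$, which may be chosen disjoint from $N_2$, so $D_{12}$ is again empty. For $\gamma_\rho^{(1)}$, the homotopy drags $K_1$ through a family of curves lying near the fibers over the trace of $\rho$ in $F$. Since $F$ is a surface, the inclusion $F \setminus \{p_2\} \hookrightarrow F$ is surjective on $\pi_1$, so I may choose a representative of $\rho$ avoiding $p_2$; then the dragging homotopy stays in the preimage of $F \setminus \{p_2\}$, which is disjoint from $N_2$, so $D_{12}$ is empty. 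A symmetric argument handles the generators moving only $K_2$.

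The main obstacle is justifying that $\nu$ is invariant under generic 1-parameter deformations of a self-homotopy relative to its endpoints, so that its value is determined by the homotopy class of the self-homotopy in $\pi_1$ of the multi-curve space (and hence by its value on representatives of generators). The codimension-one events to analyze are pair birth/death of inter-component double points (the two contributions have the same underlying figure-8 map but opposite signs and so cancel in the free abelian group) and passages through a cross-component triple point (for which one verifies using Proposition~\ref{term_cancellation.prop} that the affected figure-8 equivalence classes agree). Once this well-definedness is in place, the computation above yields $\nu(H_1 \sqcup H_2) = 0$ for every $V$-transverse self-homotopy of $K_1 \sqcup K_2$.
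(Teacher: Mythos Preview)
Your overall strategy is the same as the paper's: reduce to the generators of $\pi_1(\mathcal{C}_V,K_i)$ supplied by Lemma~\ref{lem:Vtranscurveclassification}(2) and observe that $\gamma_{\text{rot}}$, $\gamma_{\text{kink}}$, and a suitably chosen $\gamma_\rho$ can each be realized without inter-component double points. The paper then checks the codimension-$2$ stratum (multi-curves with exactly two transverse double points) to justify that $\nu$ depends only on the homotopy class of the self-homotopy; note that for curves in a $3$-manifold, triple points have codimension $3$ and do not occur in a generic one-parameter deformation of a path, so your triple-point check is not the relevant one.

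There is one genuine gap. You claim you can $V$-transversely \emph{isotope} $L$ so that the $K_i$ sit in disjoint fiber neighborhoods, citing Proposition~\ref{approximate_framed.prop}. That proposition only upgrades a \emph{given} framed isotopy, and you have not produced one: the hypothesis is merely that each $K_i$ is \emph{homotopic} to the fiber, and the components may be nontrivially linked (as in Example~\ref{parallel_fiber.ex}), so no isotopy can separate them into disjoint tubes. The paper handles this correctly by retaining the conjugating homotopies $\phi_i$ from Lemma~\ref{lem:Vtranscurveclassification}(2), which are $V$-transverse \emph{homotopies} and may introduce $D_{12}$ crossings---but the crossings arising during $\phi_i$ and those during $\phi_i^{-1}$ pair off with opposite sign, so $\nu$ is unchanged. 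Equivalently, $\nu$ is invariant under conjugation of loops by paths, since $\nu(\psi)+\nu(\psi^{-1})=0$. Once you replace your preliminary isotopy by a homotopy and invoke this conjugation invariance, your argument goes through and coincides with the paper's.
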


\begin{proof}

  If $K_1$ and $K_2$ are both homotopic to the $S^1$-fiber $f$ of $M$ then by Lemma~\ref{lem:Vtranscurveclassification}, any $V$-transverse self-homotopy of $K_1\sqcup K_2$ is of the form $H_1\sqcup H_2$, where $H_1=\phi_1\gamma_{\rho_1}\gamma_{\text{rot}}^a\gamma_{\text{kink}}^b\phi_1^{-1}$ and $H_2=\phi_2\gamma_{\rho_2}\gamma_{\text{rot}}^c\gamma_{\text{kink}}^d\phi_2^{-1}$ are $V$-transverse self-homotopies of $K_1$ and $K_2$ respectively, with $\rho_i\in \pi_1(F)$, for $i=1,2$. (Note that the evaluation of $e_{V^\perp}$ on the torus corresponding to $\gamma_{\rho_i}$ is $0$).  We may also assume that $H_1$ and $H_2$ are performed sequentially, with $H_1$ occuring before $H_2$.  
  First note no double points of $K_1$ and $K_2$ occur during $\gamma_{\text{kink}}$ or  $\gamma_{\text{rot}}$.  Also note that $H_1$ may be assumed to occur before $H_2$. Hence any intersections of $K_1$ and $K_2$ during $H_1\sqcup H_2$ can be assumed to occur during $\phi_1^{\pm 1}$ and $\phi_2^{\pm 1}$ when $H_1$ and $H_2$ are performed sequentially.  Therefore all terms of $\nu(H_1\sqcup H_2)$ occur twice with opposite sign, and so $\nu(H_1\sqcup H_2)=0$ as desired.

Following Arnold, it remains to check that $\nu$ vanishes on small loops around the codimension 2 stratum in $\mathcal{C}_V$ as in \cite{arnold1994topological}.  The only codimension 2 strata that arise are formed by $V$-transverse curves with exactly two double points. When going around the stratum, we see four singular curves with one double point separated into pairs with opposite signs, so $\nu$ indeed vanishes when going around this stratum.  For a similar argument, see \cite[Theorem 4.3.1 and Remark 4.2.2]{tchernov2002arnold}.
\end{proof}

\begin{ex}\label{parallel_fiber.ex}   {\bf Non-simple links with parallel components}

\begin{figure}
\labellist
\small\hair 2 pt
\pinlabel \textcolor{blue}{$K_1$} at 200 400
\pinlabel \textcolor{red}{$K_2$} at 350 400
\pinlabel $\pi(d)$ at 480 100
\endlabellist
\centering
    \includegraphics[width=0.5\linewidth]{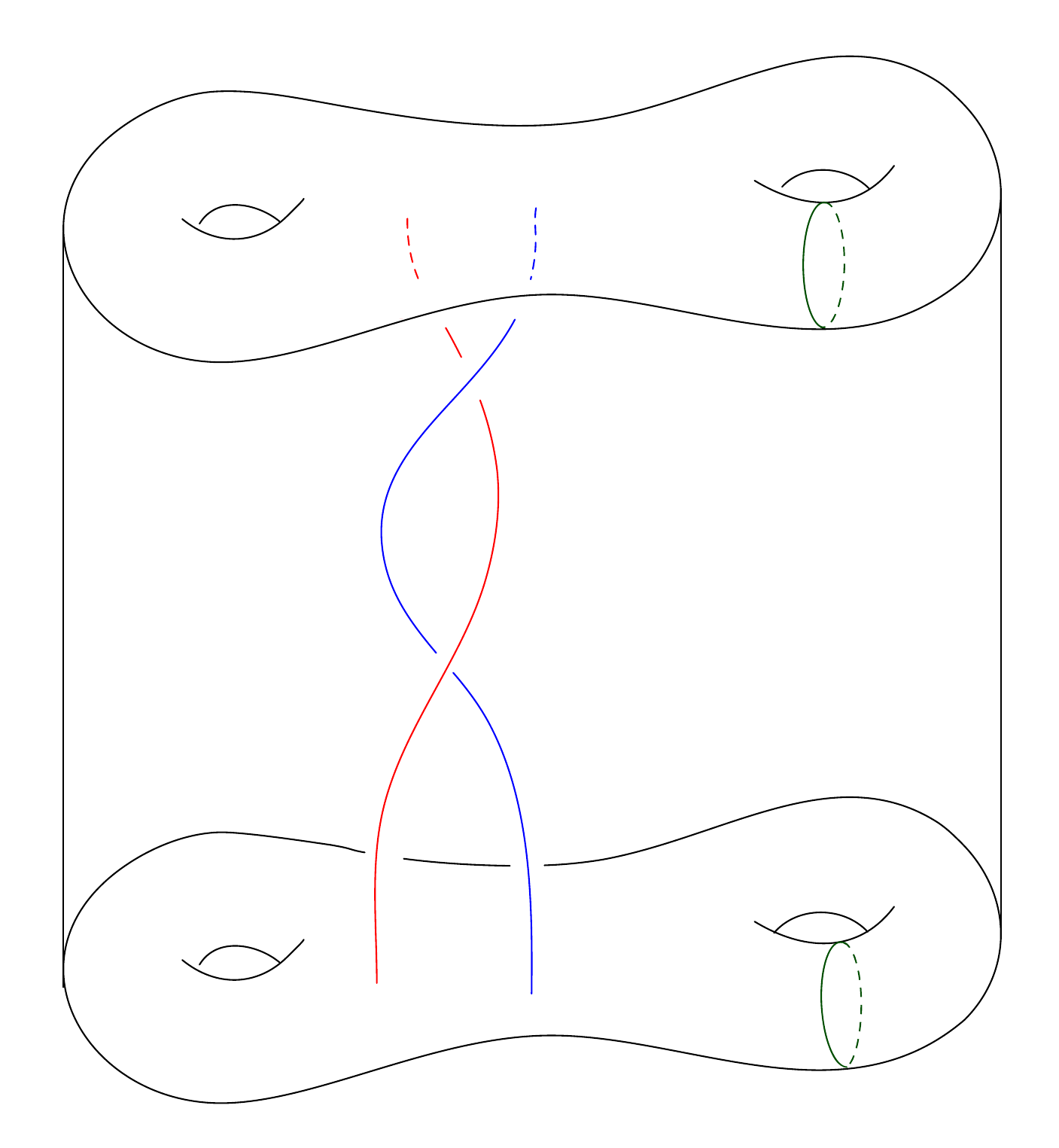}
    \caption{A non-simple class represented by a pair of parallel components in $M=F\times S^1$ where $F$ is a surface of genus 2.  A similar construction works for $S^1$ bundles over oriented surfaces.  See Section~\ref{parallel_fiber.ex}.  The Euler class $e_{V_k^\perp}\in H^2(F\times S^1)$ is dual to $2k\cdot[d]\in H_1(F\times S^1)$.}
    \label{fig:parallel_fiber}
\end{figure}

Let $M$ be an $S^1$-bundle over an oriented surface $F$ of genus $\geq  2$ with projection map $\pi:M\rightarrow F$.  Let $K_1$ and $K_2$ be once-linked curves in an torus neighborhood of an $S^1$-fiber $f$ of $M$ as in Figure ~\ref{fig:parallel_fiber} for the case $F\times S^1$. We assume that $K_1=f$. Let $d$ be a curve in $M$ which projects to an essential simple closed curve $\pi(d)$ on $F$ disjoint from the projections of $K_1$ and $K_2$ to $F$.  Let $V_k$ be a nowhere-zero vector field on $M$ such that the Euler class of $V_k^\perp$ is Poincar\'e dual to $2k\cdot[d]\in H_1(M)$, with $k\;\mathbb{Z}\setminus \{0\}$; the curve $\pi(d)$ (for some choice of $d$) is also shown in Figure ~\ref{fig:parallel_fiber}.  Perturb $K_1$ and $K_2$ so that they are transverse to $V_k$.  Recall that the $V_k$-transverse link obtained by adding $i_1$ signed kink pairs to $K_1$ and $i_2$ signed kink pairs to $K_2$ is denoted $K_1^{i_1}\sqcup K_2^{i_2}$.

\begin{prop}\label{prop:parallel_fiber} Let $(M,V_k)$ and $K_1\sqcup  K_2$ be as defined above.
    For $i_1,i_2\in k \;\mathbb{Z}$, the links $K_1\sqcup K_2$ and $K_1^{i_1}\sqcup K_2^{i_2}$ above are framed isotopic and homotopic as $V_k$-transverse immersed curves.  They are isotopic as $V_k$-transverse links if and only if $i_1=i_2$. Moreover, when $i_1\neq i_2$ the links are component-wise $V_k$-transverse isotopic, but the links are not link-homotopic as $V_k$-transverse links.
    
\end{prop}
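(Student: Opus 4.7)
The plan is to first dispatch the positive claims — framed isotopy, $V_k$-transverse homotopy of multi-curves, and component-wise $V_k$-transverse isotopy — using a single construction, and then use $\nu$ to obstruct link-homotopy when $i_1 \neq i_2$. Framed isotopy of $L = K_1 \sqcup K_2$ to $L' = K_1^{i_1} \sqcup K_2^{i_2}$ follows from the $V_k$-transverse stabilization proposition of \cite{CC} applied to each component. For the other two positive claims, choose $\rho \in \pi_1(F,p)$ transverse to $\pi(d)$ at a single point. The torus swept out by $\gamma_\rho$ evaluates $e_{V_k^\perp}$ to $2k$, so $h_V(\gamma_\rho) = k$ and $h_V(\gamma_\rho^{i_j/k}) = i_j$. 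By Proposition~\ref{hV_torus}, $\gamma_\rho^{i_j/k}$ is $C^0$-approximated by a $V_k$-transverse homotopy $K_j \to K_j^{i_j}$; since $\gamma_\rho$ is realized by dragging a tubular neighborhood of a fiber along a loop in the base and hence is a framed self-isotopy preserving embeddedness, this approximation may be chosen to be a $V_k$-transverse isotopy of the single component $K_j$. Applying this to $K_1$ and then to $K_2$ yields both the $V_k$-transverse homotopy of multi-curves and the component-wise $V_k$-transverse isotopy claimed (with double points between $K_1$ and $K_2$ permitted in the latter).

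For the ``if'' direction of the isotopy claim, when $i_1 = i_2 = i$ apply $\gamma_\rho^{i/k}$ simultaneously to the entire tubular neighborhood of $f$; both components move in lockstep, producing no double points between them, and the approximating $V_k$-transverse homotopy is a $V_k$-transverse link-isotopy from $L$ to $L'$. For the ``only if'' direction and the non--link-homotopy claim, I invoke $\nu$: since both components of $L$ are homotopic to the fiber, Proposition~\ref{well_defined_both_fiber.prop} supplies the hypothesis of Proposition~\ref{nu_invariant.prop}, so $\nu$ is well-defined on $V_k$-transverse homotopies between such links and vanishes on any link-homotopy. It then suffices to compute $\nu$ on the component-wise isotopy $H$ above and show it is nonzero when $i_1 \neq i_2$. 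During the $K_1$-dragging portion, each traversal of $\rho$ produces an outgoing--incoming pair of double points between $K_1$ and $K_2$ with opposite signs; by Proposition~\ref{term_cancellation.prop} the two corresponding figure-8 classes differ by conjugation by the class of $\rho$ in $\pi_1(E_V)$, and since $F$ has genus $\geq 2$ this conjugation is nontrivial, so each traversal contributes a nonzero element $[F_{12}]$ of the figure-8 class group. A parallel analysis for the $K_2$-dragging yields the opposite-sign contribution, and summing gives
\[
\nu(H) \;=\; \tfrac{i_1 - i_2}{k}\,[F_{12}],
\]
which is nonzero whenever $i_1 \neq i_2$. Hence $L$ and $L'$ are not link-homotopic, and in particular not isotopic as $V_k$-transverse links.

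The main obstacle will be the computation of $\nu(H)$ sketched in the previous paragraph: correctly identifying the figure-8 classes at each double point via Proposition~\ref{term_cancellation.prop}, verifying that the outgoing and incoming contributions within a single traversal fail to cancel (this is the key place where the genus $\geq 2$ hypothesis enters, ensuring the conjugating element $\rho$ acts nontrivially on the relevant figure-8's in $\pi_1(E_V)$), and confirming that the $K_1$- and $K_2$-dragging portions contribute with opposite signs so that the final answer depends on $i_1 - i_2$ rather than $i_1 + i_2$.
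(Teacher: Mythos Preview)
Your treatment of the positive claims (framed isotopy, $V_k$-transverse homotopy, component-wise $V_k$-transverse isotopy, and the $i_1=i_2$ isotopy) is correct and essentially matches the paper. The gap is in your computation of $\nu(H)$ for the obstruction.

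First, the mechanism you cite for non-cancellation of the outgoing and incoming figure-8 terms within a single traversal is backwards. Proposition~\ref{term_cancellation.prop} says two $V$-transverse figure-8 classes \emph{coincide} precisely when they are related by simultaneous conjugation by some $\gamma\in\pi_1(E_V)$; so ``differing by conjugation by $\rho$'' is an argument for cancellation, not against it. The real reason the outgoing and incoming terms differ is that during the traversal the $E_V$-lift of the moving component has changed by a power of the fiber class $F\in\pi_1(E_V)$ (this is exactly what $h_{V_k}(\gamma_\rho)=k$ records), while the stationary component's lift is unchanged. Since both components are homotopic to $f$, equality of the two figure-8 classes would force $\gamma\tilde f\gamma^{-1}=F^{2k}\tilde f$ and $\gamma\tilde f\gamma^{-1}=\tilde f$ simultaneously, which is impossible because $F$ has infinite order. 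The genus $\geq 2$ hypothesis is not what drives this step.

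Second, the formula $\nu(H)=\tfrac{i_1-i_2}{k}\,[F_{12}]$ cannot be right: $\nu$ takes values in the free abelian group on figure-8 classes, and as kinks accumulate the figure-8 class at each successive crossing is different, so the contributions are not all multiples of one class. If you track them carefully the sum telescopes: dragging $K_1$ contributes $[K_1\bullet K_2]-[K_1^{i_1}\bullet K_2]$ and then dragging $K_2$ contributes $\pm([K_1^{i_1}\bullet K_2]-[K_1^{i_1}\bullet K_2^{i_2}])$. You then still owe a non-cancellation argument for the surviving terms, and a sign check.

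The paper sidesteps all of this bookkeeping by choosing a cleaner homotopy: first pass $K_2$ once through $K_1$ to place it on a \emph{disjoint} fiber $f'$, then apply $(\gamma_{\rho_1})_V$ to $K_1$ and $(\gamma_{\rho_2})_V$ to $K_2$ with $\rho_1,\rho_2$ chosen so no inter-component crossings occur, then pass back. This produces exactly two inter-component crossings, so
\[
\nu(H)=[K_1\bullet_q K_2]-[K_1^{i_1}\bullet_q K_2^{i_2}],
\]
and non-cancellation is immediate from Proposition~\ref{term_cancellation.prop}: equality would require $\gamma\in\pi_1(E_V)$ with $\gamma\tilde f\gamma^{-1}=F^{2i_1}\tilde f$ and $\gamma\tilde f\gamma^{-1}=F^{2i_2}\tilde f$, hence $F^{2i_1}=F^{2i_2}$, contradicting $i_1\neq i_2$ since $\pi_2(M)=0$ makes $F$ of infinite order.
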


\begin{rem}In the theorem above, $K_1\sqcup K_2$ and $K_1^{i_1}\sqcup K_2^{i_2}$ are not homotopic as $V_k$-transverse multi-curves when $i_1,i_2\notin k\;\mathbb{Z}$, so the theorem covers all cases of interest.
\end{rem}

\begin{proof}
{\bf Case 1} $\mathbf{i_1\neq i_2}$.  We assume $K_1$ is the $S^1$-fiber $f$ of $M$.  Let $\phi$ be a $V$-transverse homotopy of $K_2$ to a second copy of the vertical fiber $f'$, during which $K_2$ passes through $K_1$ once positively.  Then apply the framed self-homotopy $\gamma_{\rho_1}$ (as defined in Section~\ref{generators.sec}) to $K_1$, where $\rho_1\in \pi_1(F,p)$ is a curve on $F$ intersecting a reprsentative of the class $k\cdot [\pi(d)]\in H^1(F;\mathbb{Z})$  $i_1$ times algebraically.  Next we approximate this framed self-homotopy of $K_1$ by a $V$-transverse homotopy $(\gamma_{\rho_1})_V$ from $K_1$ to $K_1^{i_1}$ as in Proposition ~\ref{hV_torus}.  We do the same for $K_2$, by choosing a curve $\rho_2$ intersecting a representative of $k\cdot[\pi(d)]\in H_1(F)$ $i_2$ times algebraically.  Finally we apply $\phi^{-1}$ to re-clasp $K_1^{i_1}$ and $K_2^{i_2}$.

Next we use the invariant $\nu$ to obstruct the $V_k$-transverse homotopy $ H=(\gamma_{\rho_1})_V\sqcup \phi (\gamma_{\rho_2})_V\phi^{-1}$ from being homotopic to a $V_k$-transverse link-homotopy (and hence from being homotopic to a $V_k$-transverse link isotopy).

Applying the formula in Definition \ref{invariant.def} as well as Proposition ~\ref{hV_torus}, we have 
$$\nu(H)=[K_1\bullet_q K_2]-[K_1^{i_2}\bullet_q K_2^{i_2}].$$

We will show the two terms of $\nu(H)$ do not cancel, and hence $\nu(H_1\sqcup H_2)\neq 0$, using Proposition ~\ref{term_cancellation.prop}.  Let $\tilde{K}_i$ denote the lift of $K_i$ to the $S^1$-bundle $E_V$ over $M$ defined in the proof of Lemma~\ref{lem:Vtranscurveclassification}. Let $F$ denote the $S^1$-fiber of this bundle. The terms $K_1\bullet_q K_2$ and $K_1^{i_2}\bullet_q K_2^{i_2}$ lift to pairs $(\tilde{K}_1,\tilde{K}_2)$ and $(F^{2i_1}\tilde{K}_1,F^{2i_2}\tilde{K}_2)$.  By Proposition~\ref{term_cancellation.prop}, if these terms cancel, there is an element $\gamma\in \pi_1(E_V)$ such that $\gamma \tilde{K}_1 \gamma^{-1}=F^{2i_1}\tilde{K}_1$ and $\gamma \tilde{K}_2 \gamma^{-1}=F^{2i_2}\tilde{K}_2$. Furthermore, since both ${K}_i$ are homotopic to the $S^1$-fiber $f$ of $M$ we can write $\tilde{f}=\tilde{K}_1=\tilde{K_2},$ giving the pair of equations

$$\gamma\tilde{f}\gamma^{-1}=F^{2i_1}\tilde{f},$$
and 
$$\gamma\tilde{f}\gamma^{-1}=F^{2i_2}\tilde{f}.$$

Hence $F^{2i_1}=F^{2i_2}$.  By the long exact sequence of the fibration $S^1\hookrightarrow E_V\rightarrow M$, as $\pi_2(M)=1$, $F$ has infinite order in $\pi_1(E_V)$.  Thus we have $i_1=i_2$, a contradiction.  Hence $\nu(H)\neq 0$.

Finally, by Propositions~\ref{nu_invariant.prop} and ~\ref{well_defined_both_fiber.prop} $K_1\sqcup K_2$ and $K_1^{i_1}\sqcup K_2^{i_2}$ are not $V_k$-transverse link-homotopic when $i_1\neq i_2.$

{\bf Case 2:} $\mathbf{i_1=i_2}$. When $i_1=i_2$ we can find a $V_k$-transverse isotopy from  $K_1\sqcup K_2$ to $K_1^{i_1}\sqcup K_2^{i_2}$ by simultaneously applying the same $\gamma_{\rho}$ to $K_1\sqcup K_2$ without unclasping $K_1$ and $K_2$, where $\rho$ is any curve on $F$ intersecting a representative of $k\cdot[\pi(d)]\in H_1(F)$ $i_1=i_2$ times algebraically.\end{proof}

\end{ex}

\begin{ex}\label{clasped_fiber.ex}   {\bf Non-simple links with clasped and parallel components}

\begin{figure}
\labellist
\small\hair 2 pt
\pinlabel \textcolor{blue}{$K_1$} at 100 450
\pinlabel \textcolor{red}{$K_2$} at 280 390
\pinlabel $\pi(d)$ at 490 100
\endlabellist
    \centering
    \includegraphics[scale=0.4]{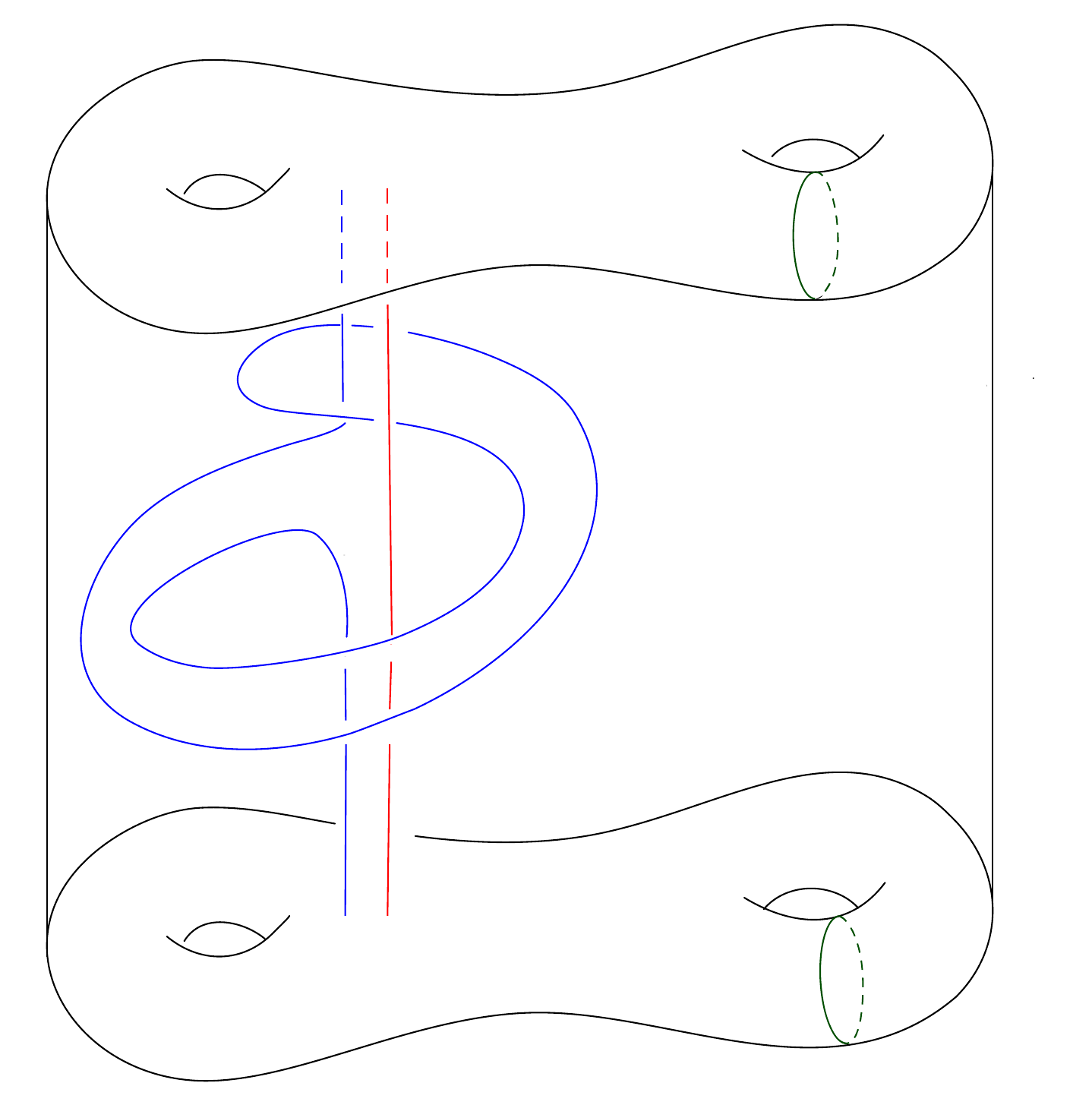}
    \caption{Schematic of a link $K_1\sqcup K_2$ in $F\times S^1$ such that $K_1\sqcup K_2$ and $K_1^{k_1}\sqcup K_2^{k_2}$ are framed isotopic, $V_k$-transversely homotopic, $V_k$-transversely link-homotopic, and not $V_k$-transversely isotopic, when $k_1\neq 0$.  The Euler class $e_{V_k^\perp}\in H^2(F\times S^1)$ is dual to $2k\cdot[d]\in H_1(F\times S^1)$.}
    \label{fig:clasped_fiber}
\end{figure}

Let $M$ be an $S^1$-bundle over an oriented surface $F$ of genus $\geq 2$, with projection map $\pi:M\rightarrow F$. Let $K_1$ and $K_2$ be curves homotopic to the $S^1$-fiber $f$ of $M$, where $K_1$ is clasped with itself as shown in Figure \ref{fig:clasped_fiber} in the case $F\times S^1$. Let $d$ be a curve in $M$ which projects to an essential simple closed curve $\pi(d)$ on $F$ disjoint from the projections of $K_1$ and $K_2$ to $F$. The vector field $V_k$ is again chosen so that $e_{V_k^\perp}=2k\cdot[d]\in H_1(M,\mathbb{Z})$.  Also as before, we perturb $K_1$ and $K_2$ to be $V_k$-transverse. We may assume that $K_2=f$.

\begin{prop}\label{clasped.prop} Let $(M,V_k)$ and $K_1\sqcup K_2$ be as defined above.  Let $i_1,i_2\in k\; \mathbb{Z}$, with $i_1\neq 0$.  The links $K_1\sqcup K_2$ and $K_1^{i_1}\sqcup K_2^{i_2}$ are framed isotopic, homotopic as $V_k$-transverse immersed multi-curves and link-homotopic as $V_k$-transverse multi-curves. But they are not component-wise isotopic as $V_k$-transverse links, and hence not isotopic as $V_k$-transverse links.

\end{prop}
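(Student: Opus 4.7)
The plan is to adapt the structure of the proof of Proposition~\ref{prop:parallel_fiber}, interchanging the roles of link-homotopy and component-wise isotopy since $K_1$ is now clasped with itself rather than with $K_2$. The three positive claims are quick. Framed isotopy is immediate from the definition of $K_j^{i_j}$ by kink pairs. For the $V_k$-transverse homotopy of multi-curves, I choose loops $\rho_j\in\pi_1(F)$ whose algebraic intersection number with a representative of $k\cdot[\pi(d)]$ equals $i_j$, so that $h_{V_k}(\gamma_{\rho_j})=i_j$, and then use Proposition~\ref{hV_torus} to approximate $\gamma_{\rho_j}$ by a $V_k$-transverse homotopy carrying $K_j$ to $K_j^{i_j}$. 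For the link-homotopy claim, since $K_1$ and $K_2$ lie in disjoint regions and $K_2$ is a single fiber, these two dragging approximations can be performed without creating intercomponent double points, even though self-intersections of $K_1$ may occur during the approximation of $\gamma_{\rho_1}$ in the presence of the self-clasp; such self-intersections are permitted in a link-homotopy.

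The key task is to show the two links are not component-wise $V_k$-transversely isotopic. I introduce a self-intersection analog $\nu_1$ of the invariant $\nu$ from Section~\ref{invariant.sec}: for a generic $V_k$-transverse homotopy $H$ of $K_1\sqcup K_2$, set
\[
\nu_1(H)=\sum_{p_i\in D_{11}}\epsilon(t_i)\,[H_t(K_1)\bullet_{p_i}H_t(K_1)],
\]
where $D_{11}$ is the set of self-intersections of $K_1$ during $H$, and the equivalence on self-figure-8's is homotopy through $V_k$-transverse self-figure-8's. The first step is to show $\nu_1$ is well-defined on homotopy classes of $V_k$-transverse homotopies with fixed endpoints. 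By Lemma~\ref{lem:Vtranscurveclassification}(2) (applicable since $K_1$ is homotopic to a power of the fiber), every $V_k$-transverse self-homotopy of $K_1$ decomposes as $\phi\gamma_{\text{rot}}^a\gamma_\rho\gamma_{\text{kink}}^c\phi^{-1}$, and a sequential cancellation argument mirroring Proposition~\ref{well_defined_both_fiber.prop} shows $\nu_1$ vanishes on any such self-homotopy; the Arnold codimension-two argument then completes the well-definedness check. In particular, any component-wise $V_k$-transverse isotopy has $\nu_1=0$ since no self-intersections of $K_1$ occur.

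Finally, I evaluate $\nu_1$ on a specific $V_k$-transverse homotopy $H$ from $K_1\sqcup K_2$ to $K_1^{i_1}\sqcup K_2^{i_2}$ in which $K_1$ is explicitly unclasped, dragged along $\rho_1$ to produce $K_1^{i_1}$ via Proposition~\ref{hV_torus}, and re-clasped. The two self-crossing moments contribute, with opposite signs, self-figure-8 classes reducing to $[K_1\bullet_p K_1]-[K_1^{i_1}\bullet_p K_1^{i_1}]$. I then invoke the analog of Proposition~\ref{term_cancellation.prop} for self-figure-8's, lifting each term to the $S^1$-bundle $E_V\to M$: cancellation of the two terms would require an element $\gamma\in\pi_1(E_V)$ conjugating $\tilde{K}_1$ into $F^{2i_1}\tilde{K}_1$ simultaneously on both petals of the self-figure-8, forcing $F^{2i_1}$ to be trivial in $\pi_1(E_V)$. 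Since $\pi_2(M)=0$ makes $F$ of infinite order, this forces $i_1=0$, contradicting the hypothesis $i_1\neq 0$. Hence $\nu_1(H)\neq 0$, so no component-wise isotopy exists. The main obstacle is adapting the figure-8 machinery of Section~\ref{invariant.sec}, particularly the cancellation criterion of Proposition~\ref{term_cancellation.prop}, to self-figure-8's, where both petals come from the same curve and are constrained to share their tangent-vector lifts to $E_V$, requiring a careful reformulation of the algebraic cancellation condition.
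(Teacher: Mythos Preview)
Your handling of the three positive claims tracks the paper's: it too unclasps $K_1$ via a $V_k$-transverse self-homotopy $\sigma$, applies $(\gamma_{\rho_1})_V$, and re-clasps, obtaining $H=\sigma(\gamma_{\rho_1})_V\sigma^{-1}\sqcup(\gamma_{\rho_2})_V$, which has no $K_1$--$K_2$ crossings and is therefore a $V_k$-transverse link-homotopy. The divergence is in the obstruction step. The paper does not build any self-crossing invariant; it simply observes that a component-wise $V_k$-transverse isotopy would restrict to a $V_k$-transverse isotopy of the \emph{knot} $K_1$ to $K_1^{i_1}$, and invokes \cite[Theorem~10.5]{CC} to rule this out for $i_1\neq 0$.

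Your attempt to rebuild that obstruction via $\nu_1$ has a genuine gap. At a self-crossing of $K_1$ the figure-8 petals are two based loops $A,B$ with $A\cdot B\simeq K_1\simeq f$ in $\pi_1(M)$; they are not both $K_1$, and your notation $[K_1\bullet_p K_1]$ obscures this. In Proposition~\ref{prop:parallel_fiber} the pair of equations $\gamma\tilde f\gamma^{-1}=F^{2i_1}\tilde f$ and $\gamma\tilde f\gamma^{-1}=F^{2i_2}\tilde f$ arises because the petals are two \emph{distinct} components, each homotopic to $f$, and comparing them forces $F^{2(i_1-i_2)}=1$. No such comparison is available here: $A$ and $B$ cannot both be homotopic to $f$ (that would give $K_1\simeq f^2$). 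In particular, if the clasp is local so that one petal $B$ is nullhomotopic in $M$, then $\tilde B$ is a power of the central element $F$ and the constraint $\gamma\tilde B\gamma^{-1}=\tilde B$ is vacuous; the surviving single equation $\gamma\tilde A\gamma^{-1}=F^{2i_1}\tilde A$ with $A\simeq f$ \emph{is} solvable---the trace of $(\gamma_{\rho_1})_V$ itself furnishes such a $\gamma$---so Proposition~\ref{term_cancellation.prop} does not separate the two terms and your $\nu_1(H)$ vanishes. The knot-level result in \cite{CC} depends on the specific clasp constructed there, and the paper's decision to cite it rather than reproduce it is not a shortcut you can fill in by mimicking the link argument of Proposition~\ref{prop:parallel_fiber}.
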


\begin{rem}
     As for the case when $i_1=0$, the links $K_1\sqcup K_2$ and $K_1\sqcup K_2^{i_2}$ are framed isotopic and homotopic as $V_k$-transverse multi-curves. We suspect that $K_1\sqcup K_2$ and $K_1\sqcup K_2^{i_2}$ are not isotopic as $V_k$-transverse links, but $\nu$ fails to obstruct such an isotopy.\\
     When $i_1,i_2\notin k\; \mathbb{Z}$ the above links are not homotopic as $V_k$-transverse multi-curves, so the above theorem covers all cases of interest aside from the case where $i_1=0$.
\end{rem}
\begin{proof}

First we construct a $V_k$-transverse homotopy from $K_1\sqcup K_2$ to $K_1^{i_1}\sqcup K_2^{i_2}.$

Let $\sigma$ be a $V_k$-transverse homotopy that unclasps $K_1$. (Note that, after applying $\sigma$, we see that $K_1$ and $K_2$ are both isotopic to $f$.) Then apply the framed self-homotopy $\gamma_{\rho_1}$ to $K_1,$ where $\rho_1\in \pi_1(F,p)$ is a curve on $F$ intersecting a representative of the class $k\cdot[\pi(d)]\in H_1(F)$ $i_1$ times algebraically. Next we approximate  this self-homotopy of $K_1$ by a $V_k$-transverse homotopy $(\gamma_{\rho_1})_V$ from $K_1$ to $K_1^{i_1}$ using Proposition~\ref{hV_torus}. We do the same for $K_2$ by choosing a curve $\rho_2$ intersecting a representative of $k\cdot[\pi(d)]$ $i_2$ times. Finally, we apply $\sigma^{-1}$ to re-clasp $K_1$.

 Note that, we have $H=\sigma(\gamma_{\rho_1})_V\sigma^{-1}\sqcup (\gamma_{\rho_2})_V$. During this $V_k$-transverse homotopy no double points occur between the two components, this is also a $V_k$-transverse link-homotopy.  
 
 According to \cite[Thoeorem 10.5]{CC}, there exists no $V_k$-transverse isotopy between $K_1$ and $K_1^{i_1}$ for $i_1\neq 0.$ The links $K_1\sqcup K_2$ and $K_1^{i_1}\sqcup K_2$ are not component-wise $V_k$-transversely isotopic, and hence are not $V_k$-transversely isotopic as links.

 These are examples of weakly simple $V$-transverse links.
    
\end{proof}

\end{ex}

\begin{ex}\label{clasped_twisted_fiber.ex}{\bf Non-simple links with clasped and braided components}

The purpose of our final example is to show that it is possible for all of the above non-simplicity phenomena to occur within a {\it single} framed isotopy class of links.

Let $M$ be an $S^1$-bundle over an oriented surface $F$ of genus $g\geq 2$ as before.  Let $K_1\sqcup K_2$ be as shown in Figure~\ref{fig:clasped_braided}. Namely, $K_1$ is a curve parallel to the fiber with the exception of one self-clasp, as in Example~\ref{clasped_fiber.ex}, as well as linked with $K_2$ exactly once in a torus neighborhood of an $S^1$-fiber $f$ of $M$, as in Example ~\ref{parallel_fiber.ex}.  The curve $d$ and vector field $V_k$ are defined as in Example~  \ref{clasped_fiber.ex}.
 We perturb $K_1\sqcup K_2$ so that they are $V_k$-transverse, and assume that $K_2=f$.  

\begin{figure}
\labellist
\small\hair 2 pt
\pinlabel \textcolor{blue}{$K_1$} at 120 480
\pinlabel \textcolor{red}{$K_2$} at 340 460
\pinlabel $\pi(d)$ at 500 150
\endlabellist
    \centering
    \includegraphics[scale=1.3,width=4in]{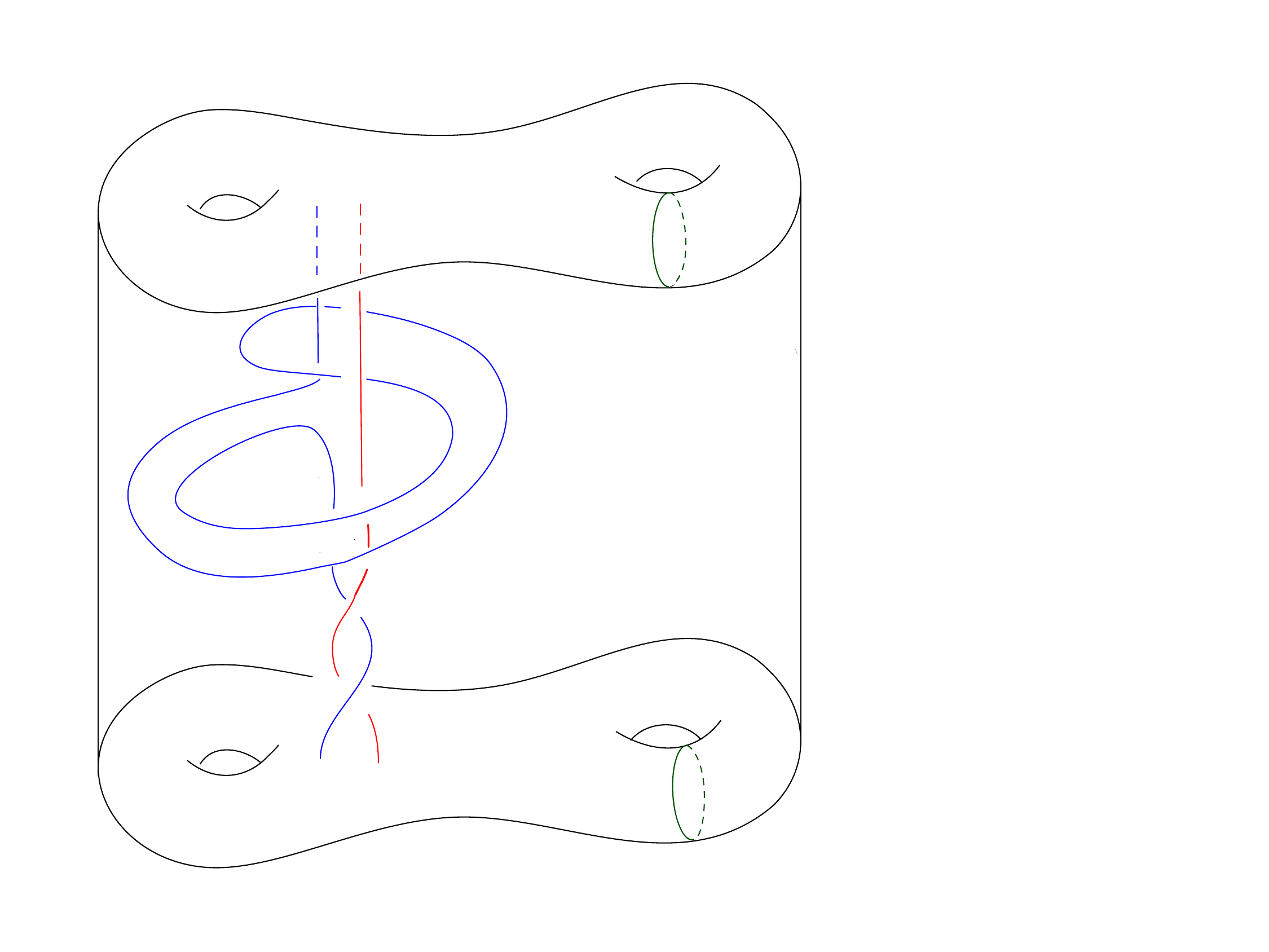}
    \caption{Schematic of a link $K_1\sqcup K_2$ in $F\times S^1$ such that $K_1\sqcup K_2$ and $K_1^{k_1}\sqcup K_2^{k_2}$ are framed isotopic, $V_k$-transversely homotopic, not $V_k$-transversely link-homotopic, and not $V_k$-transversely isotopic, when $k_1\neq 0$ and $k_1\neq k_2$.  The Euler class $e_{V_k^\perp}\in H^2(F\times S^1)$ is dual to $2k\cdot[d]\in H_1(F\times S^1)$. }
    
    \label{fig:clasped_braided}
\end{figure}

\begin{prop}
    
\label{ex4.prop} Let $i_1,i_2\in k\;\mathbb{Z}$.  The links $K_1\sqcup K_2$ and $K_1^{i_1}\sqcup K_2^{i_2}$ as  defined above are framed isotopic and homotopic as $V_k$-transverse immersed curves. These links are
\begin{enumerate}
    \item $V_k$-transverse link-homotopic and not component-wise $V_k$-transverse isotopic (and hence not $V_k$-transverse isotopic) if $i_1=i_2\neq 0$
    \item neither $V_k$-transverse link-homotopic nor component-wise $V_k$-transverse (and hence not $V_k$-transverse isotopic) if $i_1\neq i_2$ and $i_1\neq 0$
    \item component-wise $V_k$-transverse isotopic but not $V_k$-transverse link-homotopic (and hence not $V_k$-transverse isotopic) if $i_1=0$ and $i_2\neq 0$
\end{enumerate}

\end{prop}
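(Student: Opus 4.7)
The plan is to combine the two strategies from Example~\ref{parallel_fiber.ex} and Example~\ref{clasped_fiber.ex}, since the link here simultaneously has a self-clasp of $K_1$ (as in Example~\ref{clasped_fiber.ex}) and an inter-component clasp (as in Example~\ref{parallel_fiber.ex}). The framed isotopy and $V_k$-transverse homotopy of $K_1\sqcup K_2$ and $K_1^{i_1}\sqcup K_2^{i_2}$ for $i_1,i_2\in k\,\mathbb{Z}$ follow immediately from Proposition~\ref{hV_torus} just as in the preceding examples, so the real content is the four positive/negative conclusions across cases (1)--(3).

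For the negative results, I would argue as follows. In cases~(1) and~(2), where $i_1\neq 0$, component-wise $V_k$-transverse isotopy fails by the same argument as in the proof of Proposition~\ref{clasped.prop}: the self-clasp of $K_1$ together with \cite[Theorem~10.5]{CC} shows that $K_1$ and $K_1^{i_1}$ are not $V_k$-transversely isotopic as knots. In cases~(2) and~(3), where $i_1\neq i_2$, I would use the invariant $\nu$. Building the explicit $V_k$-transverse homotopy $H$ as in Example~\ref{parallel_fiber.ex}---first unclasp the two components via a homotopy $\phi$ during which $K_2$ passes through $K_1$ once, then apply $V_k$-transverse approximations $(\gamma_{\rho_1})_V$ and $(\gamma_{\rho_2})_V$, and finally apply $\phi^{-1}$---yields $\nu(H)=[K_1\bullet_q K_2]-[K_1^{i_1}\bullet_q K_2^{i_2}]$. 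Because both components are homotopic to the fiber $f$, Proposition~\ref{term_cancellation.prop} combined with the long exact sequence argument from Example~\ref{parallel_fiber.ex} reduces cancellation of these two terms to the equality $F^{2i_1}=F^{2i_2}$ in $\pi_1(E_V)$, which forces $i_1=i_2$; so $\nu(H)\neq 0$. Propositions~\ref{well_defined_both_fiber.prop} and~\ref{nu_invariant.prop} then rule out any $V_k$-transverse link-homotopy.

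For the positive results, in case~(1) with $i_1=i_2\neq 0$, I would first apply a self-homotopy $\sigma$ of $K_1$ that resolves its self-clasp (this introduces a self-intersection of $K_1$, which is permitted under link-homotopy); the resulting configuration consists of two clasped fibers as in Example~\ref{parallel_fiber.ex}. I would then invoke the Example~\ref{parallel_fiber.ex} Case~2 trick of simultaneously applying the same $\gamma_\rho$ to both components without separating them, realized as a $V_k$-transverse isotopy of the linked pair, and finally apply $\sigma^{-1}$ to reinstate the self-clasp. No intersection between $K_1$ and $K_2$ ever occurs, so the resulting motion is a $V_k$-transverse link-homotopy from $K_1\sqcup K_2$ to $K_1^{i_1}\sqcup K_2^{i_1}$. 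In case~(3) with $i_1=0$, I would keep $K_1$ fixed and apply a $V_k$-transverse approximation of $\gamma_{\rho_2}$ to $K_2$; since $K_2$ sits in a small torus neighborhood of a fiber and $\rho_2$ can be chosen on $F$ to avoid $\pi(K_1)$, this realizes as a $V_k$-transverse isotopy of $K_2$ disjoint from $K_1$, giving a component-wise $V_k$-transverse isotopy.

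The main obstacle will be verifying that the constructed motions actually land in the intended category. Specifically, in case~(1) one must check that the self-clasp resolution $\sigma$ and the subsequent simultaneous $\gamma_\rho$-drag can be arranged---by a careful local choice of the torus neighborhood of the fiber carrying $K_1\cup K_2$ and of the representative of $\rho$ on $F$---so that no intersection between $K_1$ and $K_2$ is introduced during the link-homotopy; similarly in case~(3) one must verify that the $V_k$-transverse approximation of $\gamma_{\rho_2}$ can be kept disjoint from $K_1$. This is a geometric check rather than an algebraic one, but it is the place where the specific positions of the components relative to the chosen fiber neighborhood, and the choice of representative of $\rho$ on $F$, actually matter.
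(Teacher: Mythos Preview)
Your proof is correct and follows exactly what the paper does: the paper's own proof is the single sentence ``The proofs of each item above are similar to analogous arguments in the proofs of Propositions~\ref{prop:parallel_fiber} and~\ref{clasped.prop},'' and you have simply carried out those analogous arguments in detail. Your stated obstacle in case~(3) is a non-issue, since component-wise isotopy permits crossings between distinct components---only self-crossings of each component are forbidden, and $(\gamma_{\rho_2})_V$ is already an isotopy of $K_2$.
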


\begin{proof}
The proofs of each item above are similar to analogous arguments in the proofs of Propositions~\ref{prop:parallel_fiber} and ~\ref{clasped.prop}.
\end{proof}

\end{ex}

\subsection*{Legendrian Case}

We will construct Legendrian links satisfying non-simplicity phenomena in a family of contact manifolds $(M,\xi_k)$, analogous to the phenomena in Examples ~\ref{parallel_fiber.ex}, ~\ref{clasped_fiber.ex}, and ~\ref{clasped_twisted_fiber.ex}.

We begin stating a collection of results relating homotopy and isotopy in the $V$-transverse and Legendrian categories. A version of $(4)$ is formulated for $V$-transverse knots in \cite{CC}, is formulated in the formal Legendrian category in \cite{cieliebak2012stein}, as well as proven in other cases in increasing generality in  \cite{dymara2001legendrian,eliashberg2009topologically,ding2004surgery}. 

\begin{thm}\label{Vtrans_to_Leg_promotion.thm}
Let $L_1$ and $L_2$ be Legendrian links in $(M,\xi)$, with $M$ closed and orientable, such that $\xi$ is co-oriented, with $V$ any choice of co-orienting vector field for $\xi.$ 
\begin{enumerate}
\item  $L_1$ and $L_2$ are homotopic as $V$-transverse multi-curves if and only if $L_1$ and $L_2$ are homotopic as Legendrian multi-curves.
\item $L_1$ and $L_2$ are link-homotopic as $V$-transverse links if and only if $L_1$ and $L_2$ are link-homotopic as Legendrian links.
\item If there is an overtwisted disk $D$ in $(M,\xi)$ such that $L_1,L_2\subseteq M\setminus D$, then $L_1$ and $L_2$ are component-wise isotopic as $V$-transverse links if and only if $L_1$ and $L_2$ are component-wise isotopic as Legendrian links. \cite[Theorem 7.19]{cieliebak2012stein}
\item If there is an overtwisted disk $D$ in $(M,\xi)$ such that $L_1,L_2\subseteq M\setminus D$, then $L_1$ and $L_2$ are  isotopic as $V$-transverse links if and only if $L_1$ and $L_2$ are isotopic as Legendrian links. \cite[Theorem 7.19]{cieliebak2012stein}
\end{enumerate}
\end{thm}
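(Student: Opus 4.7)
Parts (3) and (4) are exactly the statement of \cite[Theorem 7.19]{cieliebak2012stein} applied to each link-homotopy or isotopy class respectively, so no further work is required for those items. My plan is therefore to prove parts (1) and (2) using h-principles for Legendrian and $V$-transverse immersions. The setup: since $V$ is a co-orienting vector field for $\xi$, the oriented $2$-plane bundles $\xi$ and $V^\perp$ coincide, and in particular their unit subbundles satisfy $E_\xi = E_V$. I will make crucial use of this identification throughout.

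For part (1), both the Legendrian and the $V$-transverse conditions on an immersion $S^1 \to M$ are open ample differential relations in the sense of Gromov, so each immersion space satisfies the h-principle (\cite[Theorem 18.4.1]{eliashberg2002introduction}). Consequently, both $\text{LegImm}(S^1, M)$ and $\text{VtransImm}(S^1, M)$ are weakly homotopy equivalent to the same formal model, namely the free loop space $\Omega E_\xi = \Omega E_V$, via the map sending a curve to the lift determined by the projection of its tangent direction. Every Legendrian immersion is automatically $V$-transverse (a nonzero vector in $V^\perp$ is transverse to $V$), so the inclusion $\iota\colon \text{LegImm}(S^1, M) \hookrightarrow \text{VtransImm}(S^1, M)$ is compatible with the two formal equivalences; by the $2$-out-of-$3$ property for weak equivalences, $\iota$ is itself a weak equivalence, and in particular a bijection on $\pi_0$. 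The same argument applied to the $n$-fold product of immersion spaces promotes this to multi-curves, which proves (1).

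For part (2), one implication is immediate because a Legendrian link-homotopy is automatically a $V$-transverse link-homotopy. For the converse, given a $V$-transverse link-homotopy $H=(H_1,\dots,H_n)\colon [0,1] \to \text{VtransLinkImm}$ between two Legendrian links $L_0, L_1$, I will promote it to a Legendrian link-homotopy inductively on the component index. At the $k$-th step, components $H_1,\dots,H_{k-1}$ have already been replaced by Legendrian homotopies while $(H_k,\dots,H_n)$ remain $V$-transverse, with all components still mutually disjoint. By the link-homotopy hypothesis, the image of $H_k$ has positive minimum distance from the union of images of the other components at every time $t$, so I can choose an open tubular neighborhood $U$ of the trace of $H_k$ in $[0,1]\times M$ whose time slices are disjoint from the other components. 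Applying the parametric, rel-endpoint h-principle for Legendrian immersions on the open contact submanifold $U$ to $H_k$ — which starts and ends as a Legendrian immersion since $L_0$ and $L_1$ are Legendrian — yields a Legendrian homotopy supported inside $U$, hence automatically disjoint from the other components. Iterating over $k=1,\dots,n$ gives the required Legendrian link-homotopy. The main technical obstacle is precisely this disjointness condition: an h-principle only guarantees $C^0$-small perturbations, which a priori could create intersections with neighboring components, and the open-neighborhood construction is what keeps the perturbation confined to a region that stays uniformly away from the rest of the link during a link-homotopy.
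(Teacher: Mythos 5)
Your strategy for (1) and (2) is genuinely different from the paper's. The paper does not invoke an h-principle for Legendrian immersions directly; it uses a $C^0$-approximation statement whose output is a Legendrian homotopy between \emph{stabilized} endpoints $(A_i)_{n_1,n_2}$ and $(B_i)_{n_3,n_4}$, and then must prove $n_1=n_3$ and $n_2=n_4$ by tracking framings and lifts to $E_V$ through the homotopy, finally cancelling the stabilizations using the local Legendrian homotopy from $L$ to $L_{n,n}$. If the full-strength (parametric, relative-to-endpoints, $C^0$-dense) h-principle for Legendrian immersions is granted, your route bypasses that bookkeeping entirely, and your mechanism for (2) --- keep the Legendrian homotopy uniformly $C^0$-close to the $V$-transverse link-homotopy so that no inter-component double points are created --- is the same disjointness mechanism the paper relies on.

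That said, there are concrete gaps as written. First, the Legendrian condition $f'(t)\in\xi_{f(t)}$ is a \emph{closed} differential relation, not an open ample one, so \cite[Theorem 18.4.1]{eliashberg2002introduction} does not apply to the space of Legendrian immersions; the paper uses that theorem only for the $V$-transverse side. The fact you need --- that Legendrian immersions of $S^1$ into a contact $3$-manifold satisfy the parametric, relative, $C^0$-dense h-principle with formal model $\Omega E_\xi$ --- is true, but it is the isotropic-immersion h-principle (Eliashberg--Mishachev, Ch.\ 16), proved by entirely different means; your stated justification and citation are wrong, even though the downstream conclusion survives. Second, in (2) the phrase ``h-principle on the open contact submanifold $U$'' does not parse, since $U\subset[0,1]\times M$ is not a contact manifold; what you actually need is the relative-to-endpoints, $C^0$-dense one-parametric form, with all perturbations smaller than a fixed fraction of the minimal distance between distinct components over all times, accounting also for the drift of the components already replaced at earlier stages of your induction. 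Third, (3) and (4) are not verbatim \cite[Theorem 7.19]{cieliebak2012stein}: that theorem classifies loose Legendrian embeddings up to \emph{formal Legendrian} isotopy, and one must still translate between formal Legendrian isotopy and $V$-transverse isotopy; the paper defers this translation to \cite{CC} for knots and asserts the link case is analogous, so ``no further work is required'' is an overstatement.
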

\begin{proof}
$(1)$:  Let $L_1=A_1\sqcup A_2$ $L_2=B_1\sqcup B_2$ be Legendrian links in $(M,\xi)$ which are homotopic as $V$-transverse multi-curves.  After sufficiently many stabilizations of the components of $L_1$ and $L_2$, any homotopy from $L_1$ and $L_2$ carrying $A_1$ to $B_1$ and $A_2$ to $B_2$ can be $C^0$-approximated by a Legendrian homotopy \cite{gromov1986partial,eliashberg1992contact}. Recall from Section~\ref{stab.sec} and Figure ~\ref{fig:stab} that $L_{i,j}$ denotes a Legendrian link component $L$ with $i$ positive and $j$ negative stabilizations. In particular, for fixed $i=1,2$ (we repeat the argument once for each component), for any  $n_1,n_2\in \mathbb{Z}^+$, there exist $n_3, n_4\in \mathbb{Z}^+$ such that $(A_i)_{n_1,n_2}$ is Legendrian homotopic to $(B_i)_{n_3,n_4}$, and this Legendrian homotopy is $C^0$-close to the given $V$-transverse homotopy from $A_i$ to $B_i$; see also \cite{fuchs1995invariants,goryunov2001plane,hill1997vassiliev,tchernov2003isomorphism}. Let $(A_i)_t$ the homotopy from $A_i$ to $B_i$ at time $t$, and similarly let $((A_i)_{n_1,n_2})_t$ denote the homotopy from $(A_i)_{n_1,n_2}$ to  $(B_i)_{n_3,n_4}$ at time $t$.  By comparing the framing of $(A_i)_t$ to the framing of $((A_i)_{n_1,n_2})_t$ throughout the homotopy, one can show that $n_1+n_2=n_3+n_4$; by comparing the homotopy classes of the lifts of $(A_i)_t$ and $((A_i)_{n_1,n_2})_t$ to $E_V$, one can show that $n_1-n_2=n_3-n_4$. Details of this argument are given in \cite[Section 5.3]{tchernov2003isomorphism}.  
It follows that $n_1=n_3$ and $n_2=n_4.$ Taking $n=\text{max}(n_1,n_2)$, we can perform extra stabilizations on the initial and final components to get a Legendrian homotopy from $(A_i)_{n,n}$ to $(B_i)_{n,n}$.   As shown in \cite[Figure 6]{tchernov2003isomorphism}, any Legendrian curve $L$ is Legendrian homotopic to $L_{n,n}$ for all $n$; this homotopy is performed locally on an arc of $L$.  Hence for each fixed $i=1,2$ we get a Legendrian homotopy from $A_i$ to $B_i$, so $L_1$ and $L_2$ are Legendrian homotopic.  Conversely, if $L_1$ and $L_2$ are homotopic as Legendrian multi-curves, they are clearly $V$-transverse homotopic, as $V$ co-orients $\xi.$

$(2)$:  Let $L_1=A_1\sqcup A_2$ and  $L_2=B_1\sqcup B_2$ be Legendrian links in $(M,\xi)$ which are link-homotopic as $V$-transverse multi-curves. Apply the construction in $(1)$ to get a Legendrian homotopy from $(A_i)_{n,n}$ to $(B_i)_{n,n}$ for $i=1,2$.  Since this Legendrian homotopy is constructed to be $C^0$-close to the given $V$-transverse link-homotopy, it is a Legendrian link-homotopy.  Finally, no double points between components are created during the local Legendrian homotopies from $A_i$ to $(A_i)_{n,n}$ for $i=1,2$.  Therefore, the procedure in $(1)$ yields a Legendrian link-homotopy from $L_1$ to $L_2$.  Conversely, if $L_1$ and $L_2$ are link-homotopic as Legendrian multi-curves, they are clearly $V$-transverse link-homotopic, as $V$ co-orients $\xi.$

In the formal Legendrian category, $(3)$ and $(4)$ are proven in \cite[Theorem 7.19]{cieliebak2012stein}.  In \cite{CC}, an analogous statement is proven for $V$-transverse knots by applying \cite[Theorem 7.19]{cieliebak2012stein}, but the argument for $V$-transverse links is analogous.
\end{proof}

We now return to the specific case where $M$ is an oriented $S^1$-bundle over an oriented surface $F$ of genus $\geq 2$.  Consider one of the links $K_1\sqcup K_2$ in Examples ~\ref{parallel_fiber.ex},~\ref{clasped_fiber.ex}, or ~\ref{clasped_twisted_fiber.ex}.  As in those examples, let $d$ be a curve in $M$ such that its projection $\pi(d)$ is an essential simple closed curve on $F$ disjoint from the projections of $K_1$ and $K_2$ to $F$.
By work of Lutz and Martinet, we first choose a contact structure $\xi_k$ on $M$ such that the Euler class $e_{\xi_k}$ is dual to $2k\cdot[d]\in H_1(M,\mathbb{Z})$, where $d$ is a curve in $M$, and $\pi(d)$ is as shown in Figures ~\ref{fig:parallel_fiber}, ~\ref{fig:clasped_fiber}, and ~\ref{fig:clasped_braided} \cite{lutz1971quelques,martinet2006formes}; see also \cite{ginzburg1988closed,geiges2008introduction}.  Moreover, given any Legendrian link $L_1\sqcup L_2$ in $M$, we can modify $\xi_k$ by performing a full Lutz along a transverse unknot in the complement of $L_1\sqcup L_2$, in such a way that the new contact structure $\xi_k'$ is homotopic to $\xi_k$ as a 2-plane field, and such that $L_1\sqcup L_2$ is a {\it loose} link in $(M,\xi_k')$, i.e., there is an overtwisted disk in its complement \cite[Proposition 4.5.4]{geiges2008introduction}.

\begin{cor}
  Let $M$ be an oriented $S^1$-bundle over an oriented surface $F$ of genus $\geq 2$, with contact structure $\xi_k$ as defined above.  There exist Legendrian links $L$ and $L'$ in $(M, \xi_k)$ such that  $L$ and $L'$  are homotopic as Legendrian multi-cuves, and isotopic as framed links, but

\begin{enumerate}
    \item $L$ and $L'$ are not Legendrian link-homotopic, and thus not Legendrian isotopic as links. These links are certain Legendrian representatives of the isotopy classes $K_1\sqcup K_2$ and $K_1^{i_1}\sqcup K_2^{i_2} $ in Examples~\ref{parallel_fiber.ex} or ~\ref{clasped_twisted_fiber.ex}, with $i_1\neq i_2$. \\
    Moreover, when the above Legendrian links $L$ and $L'$ are representatives of the isotopy classes in Example ~\ref{parallel_fiber.ex} for any $i_1\neq i_2$, or in Example $\ref{clasped_twisted_fiber.ex}$ if $i_1=0$ and $i_2\neq 0$, after modifying $\xi_k$ by a Lutz twist,  $L$ and $L'$ are component-wise Legendrian isotopic.

    \item   $L$ and $L'$  are not component-wise Legendrian isotopic, and thus not Legendrian isotopic as links. Thus they are not Legendrian isotopic as links. These links are certain Legendrian representatives of the isotopy classes $K_1\sqcup K_2$ and $K_1^{i_1}\sqcup K_2^{i_2} $ in Example~\ref{clasped_fiber.ex}, when $i_1\neq 0$, or Example ~\ref{clasped_twisted_fiber.ex}, when $i_1\neq 0$.  \\
   Moreover, $L$ and $L'$  are Legendrian link-homotopic in Example~\ref{clasped_fiber.ex} when $i_1\neq 0$, and in Example ~\ref{clasped_twisted_fiber.ex}, when $i_1=i_2\neq 0$.  In the remaining case of Example ~\ref{clasped_twisted_fiber.ex} when $i_1\neq 0$ and $i_1\neq i_2,$ $L$ and $L'$  are not Legendrian link-homotopic.

\end{enumerate}
\end{cor}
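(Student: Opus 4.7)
The plan is to promote the $V_k$-transverse non-simplicity phenomena of Propositions~\ref{prop:parallel_fiber}, \ref{clasped.prop}, and~\ref{ex4.prop} to the Legendrian category by invoking the four parts of Theorem~\ref{Vtrans_to_Leg_promotion.thm}. First I would apply the Lutz--Martinet construction, already invoked in the paragraph preceding the corollary, to obtain $\xi_k$ with $e_{\xi_k}$ Poincar\'e dual to $2k\cdot[d]$, so that a chosen co-orienting vector field $V$ for $\xi_k$ can serve as the field $V_k$ in the $V$-transverse examples. I would then produce $L$ and $L'$ as Legendrian $C^0$-approximations of the $V_k$-transverse representatives $K_1\sqcup K_2$ and $K_1^{i_1}\sqcup K_2^{i_2}$ from the relevant example; such approximations lie in the same $V_k$-transverse isotopy class as the originals, so Propositions~\ref{prop:parallel_fiber}, \ref{clasped.prop}, and~\ref{ex4.prop} apply to them directly. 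The framed isotopy class is preserved trivially, and Theorem~\ref{Vtrans_to_Leg_promotion.thm}$(1)$ promotes the $V_k$-transverse-homotopy between the two links to a Legendrian homotopy, supplying the two required generalized classical invariants.

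For part~(1) of the corollary, the failure of Legendrian link-homotopy follows by contradiction from Theorem~\ref{Vtrans_to_Leg_promotion.thm}$(2)$: any hypothetical Legendrian link-homotopy would automatically be a $V$-transverse link-homotopy, which Propositions~\ref{prop:parallel_fiber} and~\ref{ex4.prop} rule out whenever $i_1\neq i_2$ via the invariant $\nu$. For the moreover clause, I would modify $\xi_k$ by a full Lutz twist along a transverse unknot in the complement of $L\sqcup L'$, producing $\xi_k'$ homotopic to $\xi_k$ as a 2-plane field and in which both links are loose. Theorem~\ref{Vtrans_to_Leg_promotion.thm}$(3)$ then promotes the component-wise $V_k$-transverse isotopies exhibited in Example~\ref{parallel_fiber.ex} (case $i_1\neq i_2$) and Example~\ref{clasped_twisted_fiber.ex} (case $i_1=0$, $i_2\neq 0$) to component-wise Legendrian isotopies in $(M,\xi_k')$.

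For part~(2), I would again perform the Lutz twist to make $L\sqcup L'$ loose and then apply Theorem~\ref{Vtrans_to_Leg_promotion.thm}$(3)$ in contrapositive form: the failure of component-wise $V_k$-transverse isotopy in Example~\ref{clasped_fiber.ex} (with $i_1\neq 0$) and in the relevant cases of Example~\ref{clasped_twisted_fiber.ex} (with $i_1\neq 0$) forces the failure of component-wise Legendrian isotopy, and in particular the failure of Legendrian isotopy as links. The moreover statements follow from Theorem~\ref{Vtrans_to_Leg_promotion.thm}$(2)$ in both directions: the $V$-transverse link-homotopies already exhibited in Example~\ref{clasped_fiber.ex} and in Example~\ref{clasped_twisted_fiber.ex} with $i_1=i_2\neq 0$ become Legendrian link-homotopies, while the $\nu$-obstruction in Example~\ref{clasped_twisted_fiber.ex} with $i_1\neq 0$ and $i_1\neq i_2$ passes verbatim to the Legendrian category.

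The main obstacle I expect is bookkeeping rather than substance: one must verify that the Legendrian approximations of $L$ and $L'$ really lie in the same $V_k$-transverse isotopy class as their $V$-transverse models (so that $\nu$ and Lemma~\ref{lem:Vtranscurveclassification} continue to apply), and that a single Lutz twist supported in the complement of $L\cup L'$ simultaneously loosens both links while leaving $\xi_k$ in the same homotopy class of 2-plane fields, so that every part of Theorem~\ref{Vtrans_to_Leg_promotion.thm} can be invoked in $(M,\xi_k')$ without disturbing any of the generalized classical invariants. Once these verifications are in place, the corollary reduces formally to the $V_k$-transverse examples already in hand.
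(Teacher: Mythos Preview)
Your overall strategy of invoking Theorem~\ref{Vtrans_to_Leg_promotion.thm} part by part is exactly right, and matches the paper's approach. However, there is a genuine gap in how you produce the Legendrian links $L$ and $L'$, and it is not merely bookkeeping.

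You propose to take Legendrian $C^0$-approximations of $K_1\sqcup K_2$ and of $K_1^{i_1}\sqcup K_2^{i_2}$ \emph{separately}, and then assert that each approximation lies in the same $V_k$-transverse isotopy class as its model. This is false in general: a Legendrian $C^0$-approximation of a $V$-transverse knot is obtained by many local zig-zags, which change both the framing and the $V$-transverse isotopy class. There is no control over how many stabilizations each approximation introduces, so your $L$ and $L'$ need not even be framed isotopic to one another, let alone related by exactly $i_1$ and $i_2$ kink-pairs on the respective components. The invariant $\nu$ and the Propositions you want to invoke would then no longer compare $L$ with $L'$.

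The paper's construction avoids this by taking a \emph{single} Legendrian approximation $L_1\sqcup L_2$ of $K_1\sqcup K_2$, and then producing both Legendrian links from it via Legendrian stabilizations: $L=(L_1)_{0,i_1}\sqcup(L_2)_{0,i_2}$ and $L'=(L_1)_{i_1,0}\sqcup(L_2)_{i_2,0}$. The key input, recalled in Section~\ref{stab.sec}, is that $(L_j)_{i_j,0}$ is $V_k$-transversely isotopic to $\bigl((L_j)_{0,i_j}\bigr)^{i_j}$. Thus, as $V_k$-transverse links, $L'$ is obtained from $L$ by adding exactly $i_1$ and $i_2$ kink-pairs to the two components, so $L$ and $L'$ play the roles of $K_1\sqcup K_2$ and $K_1^{i_1}\sqcup K_2^{i_2}$ in Propositions~\ref{prop:parallel_fiber}, \ref{clasped.prop}, and~\ref{ex4.prop}. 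This is the missing idea.

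A smaller point on part~(2): you do not need a Lutz twist to conclude that $L$ and $L'$ are not component-wise Legendrian isotopic. A Legendrian isotopy is already a $V$-transverse isotopy (since $V$ co-orients $\xi_k$), so the failure of component-wise $V_k$-transverse isotopy immediately forces the failure of component-wise Legendrian isotopy in $(M,\xi_k)$ itself. Invoking Theorem~\ref{Vtrans_to_Leg_promotion.thm}(3) in contrapositive form, as you do, would unnecessarily move the statement to the modified contact structure $\xi_k'$.
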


\begin{rem}
    By choosing certain Legendrian representatives from the isotopy class in Example~\ref{clasped_twisted_fiber.ex}, we see  that all of the above non-simplicity phenomena can occur in a single framed isotopy class. 
\end{rem}

\begin{proof}
Let $V_k$ be a co-orienting vector fied of $\xi_k$.  Let $K_1\sqcup K_2$ be a $V_k$-transverse link in $M$.  Let $L=L_1\sqcup L_2$ be a Legendrian representative of the smooth isotopy class of $K_1\sqcup K_2$, obtained by taking a $C^0$-approximation, is in \cite{etnyre2005legendrian}.  Consider integers $i_1, i_2\in \mathbb{Z}$.  Recall from Section~\ref{background.sec} that, as $V_k$-transverse knots, $(L_1)_{i_1,0}$ is isotopic to $(L_1)_{0,i_1}^{i_1}$. In other words, up to $V_k$-transverse isotopy, $(L_1)_{i_1,0}$ can be obtained from $(L_1)_{0,i_1}$ by adding $i_1$ pairs of kinks as in Figure ~\ref{kink.fig}. Similarly, up to $V_k$-transverse isotopy,
$(L_2)_{i_2,0}$ can be obtained from and $(L_2)_{0,i_2}$ by adding $i_2$ pairs of kinks as in Figure ~\ref{kink.fig}. 

We now consider the constructions in Example ~\ref{parallel_fiber.ex},~\ref{clasped_fiber.ex}, and ~\ref{clasped_twisted_fiber.ex}, with the {\it Legendrian} links $L=(L_1)_{0,i_1}\sqcup(L_2)_{0,i_2}$ and $L'=(L_1)_{i_1,0}\sqcup(L_2)_{i_2,0}$ playing the roles of $K_1\sqcup K_2$ and $K_1^{i_1}\sqcup K_2^{i_2}$. In each case below, we will specify the smooth isotopy class $K_1\sqcup K_2$, and conditions on $i_1$ and $i_2$, and then always define $L=(L_1)_{0,i_1}\sqcup(L_2)_{0,i_2}$ and $L'=(L_1)_{i_1,0}\sqcup(L_2)_{i_2,0}$ as above, based on those choices.

In each case, $L$ and $L'$ are isotopic as framed links and, since every Legendrian immersion in $(M,\xi_k)$ is $V_k$-transverse, $L$ and $L'$ are homotopic as $V_k$-transverse multicurves.  By Theorem~\ref{Vtrans_to_Leg_promotion.thm} Part (1), $L$ and $L'$ are homotopic as Legendrian multi-curves.

Now we turn to the proof of Part (1) of the Corollary.  Suppose that $K_1\sqcup K_2$ is in the isotopy class given in Example ~\ref{parallel_fiber.ex} or Example~\ref{clasped_twisted_fiber.ex}, and assume $i_1\neq i_2.$
Since  $L$ and $L'$ are not link-homotopic ask $V_k$-transverse links, then by Theorem~\ref{Vtrans_to_Leg_promotion.thm} Part (2), they are not link-homotopic as Legendrian links.  Next suppose that $K_1\sqcup K_2$ is in the isotopy class given in Example ~\ref{parallel_fiber.ex}  and $i_1\neq i_2$, or Example~\ref{clasped_twisted_fiber.ex}, with $i_1=0$ and $i_2\neq 0$.  In this setting $L$ and $L'$ are component-wise $V_k$-transverse isotopic. By performing a full Lutz twist along a transverse unknot in the complement of $L$ and $L'$, to get a new contact structure $\xi_k'$ on $M$ which is homotopic to $\xi_k$ as a 2-plane field, we can assume that $L$ and $L'$ have a common overtwisted disk in their complement.  Now by Theorem ~\ref{Vtrans_to_Leg_promotion.thm} Part (3), we have that $L$ and $L'$ are component-wise isotopic as Legendrian links in $(M,\xi_k').$

For the proof of Part (2), suppose $K_1\sqcup K_2$ is in the isotopy class given in Example ~\ref{clasped_fiber.ex} or ~\ref{clasped_twisted_fiber.ex} and assume $i_1\neq 0.$  Since $L$ and $L'$ are not component-wise isotopic as $V_k$-transverse links, they are not component-wise isotopic as Legendrian links in $(M,\xi_k).$  Furthermore, if we choose $K_1\sqcup K_2$ to be the isotopy class in Example~\ref{clasped_fiber.ex} with $i_1\neq 0$ or  Example ~\ref{clasped_twisted_fiber.ex} with $i_1=i_2\neq 0$, $L$ and $L'$ are $V_k$-transverse link-homotopic.  By Theorem~\ref{Vtrans_to_Leg_promotion.thm} Part (2), $L$ and $L'$ are Legendrian link-homtopic.  In the remaining case of Example ~\ref{clasped_twisted_fiber.ex} with $i_1\neq 0$ and $i_1\neq i_2$, $L$ and $L'$ are not $V_k$-transverse link-homotopic, so they are not Legendrian link-homotopic. 
\end{proof}

In Table~\ref{summary.tab} we summarize the results of Section~\ref{examples.sec}.

\begin{table}[htbp]
\resizebox{\textwidth}{!}{
\begin{tabular}{|c||c|c|c|}
\hline
   Links which are  &  Link-homotopic as  &Component-wise isotopic  & Isotopic as \\ 
    $V$-transverse homotopic &$V$-transverse links& as $V$-transverse links& $V$-transverse Links\\
    (resp. Legendrian homotopic) & (resp. as Legendrian links) &(resp. as Legendrian links,&(resp. as Legendrian links, \\
    and framed isotopic&&if components chosen to be loose)&if link chosen to be loose)\\
    \hline\hline
  
     Figure~\ref{fig:parallel_fiber}&&&\\
     $K_1\cup K_2$ and $K_1^{i_1}\cup K_2^{i_2}$, &no&yes&no\\ 
     $i_1\neq i_2$ &&& \\\hline
    Figure~\ref{fig:parallel_fiber}&&&\\
    $K_1\cup K_2$ and $K_1^{i}\cup K_2^{i}$,&yes&yes &yes\\
    $i\neq 0$&&&\\ \hline\hline
     Figure~\ref{fig:clasped_fiber}&&&\\
     $K_1\cup K_2$ and $K_1^{i_1}\cup K_2^{i_2}$, &yes&no&no\\
     $i_1\neq 0$ &&&\\\hline
     Figure~\ref{fig:clasped_fiber}&&&\\
     $K_1\cup K_2$ and $K_1\cup K_2^{i}$, &?&yes&?\\ 
     $i\neq 0$&&&\\\hline\hline
      
      Figure~\ref{fig:clasped_braided}&&&\\
      $K_1\cup K_2$ and $K_1^{i_1}\cup K_2^{i_2}$, &no&no&no\\ 
      $i_1\neq 0$, $i_1\neq i_2$&&&\\ \hline
      Figure~\ref{fig:clasped_braided}&&&\\
      $K_1\cup K_2$ and $K_1\cup K_2^{i}$, &no&yes&no\\
      $i\neq 0$&&& \\\hline
      Figure~\ref{fig:clasped_braided}&&&\\
      $K_1\cup K_2$ and $K_1^i\cup K_2^{i}$, &yes&no&no\\
      $i \neq 0$&&&\\\hline

\end{tabular}}\caption{Summary of non-simplicity phenomena in $S^1$-bundles over orientable surfaces of genus $g\geq 2$.}\label{summary.tab}
\end{table}

\newpage
{\it Acknowledgments: We thank Nikolai Mishacev and Rustam Sadykov for valuable discussions. Patricia Cahn is partially supported by NSF grant DMS-2145384 and Rima Chatterjee acknowledges partial support from SFB/TRR 191 ``Symplectic Structures in Geometry, Algebra and Dynamics, funded by the DFG (Project- ID 281071066-TRR 191).}
\bibliographystyle{alpha}
\bibliography{pseudoLeg}

@book{arnold1994topological,
  title={Topological invariants of plane curves and caustics},
  author={Arnold, Vladimir Igorevich},
  volume={5},
  year={1994},
  publisher={American Mathematical Soc.}
}

@article{benedetti2001combed,
  title={Combed 3-manifolds with concave boundary, framed links, and pseudo-{L}egendrian links},
  author={Benedetti, Riccardo and Petronio, Carlo},
  journal={Journal of Knot Theory and Its Ramifications},
  volume={10},
  number={01},
  pages={1--35},
  year={2001},
  publisher={World Scientific}
}

@article{benedetti2001reidemeister,
  title={Reidemeister--Turaev torsion of 3-dimensional Euler structures with simple boundary tangency and pseudo-{L}egendrian knots},
  author={Benedetti, Riccardo and Petronio, Carlo},
  volume={106},
  number={1},
  pages={13--61},
  year={2001},
  publisher={Springer}
}

@article{cahn2014number,
  title={The number of framings of a knot in a 3-manifold},
  author={Cahn, Patricia and Chernov, Vladimir and Sadykov, Rustam},
  journal={Journal of Knot Theory and Its Ramifications},
  volume={23},
  number={13},
  pages={1450072},
  year={2014},
  publisher={World Scientific}
}

@article {CC,
    AUTHOR = {Cahn, Patricia and Chernov, Vladimir},
     TITLE = {Loose {L}egendrian and pseudo-{L}egendrian knots in
              3-manifolds},
   JOURNAL = {J. Symplectic Geom.},
  FJOURNAL = {The Journal of Symplectic Geometry},
    VOLUME = {18},
      YEAR = {2020},
    NUMBER = {3},
     PAGES = {651--689},
      ISSN = {1527-5256,1540-2347},
   MRCLASS = {57K33 (57K10)},
  MRNUMBER = {4142483},
MRREVIEWER = {Marc\ Kegel},
       DOI = {10.4310/JSG.2020.v18.n3.a2},
       URL = {https://doi.org/10.4310/JSG.2020.v18.n3.a2},
}

@article{chernov2004relative,
  title={Relative framing of transverse knots},
  author={Chernov, Vladimir},
  journal={International Mathematics Research Notices},
  volume={2004},
  number={52},
  pages={2773--2795},
  year={2004},
  publisher={OUP}
}

@article{chernov2005framed,
  title={Framed knots in 3-manifolds and affine self-linking numbers},
  author={Chernov, Vladimir},
  journal={Journal of Knot Theory and its Ramifications},
  volume={14},
  number={06},
  pages={791--818},
  year={2005},
  publisher={World Scientific}
}

@book{cieliebak2012stein,
  title={From Stein to Weinstein and Back: Symplectic Geometry of Affine Complex Manifolds},
  author={Cieliebak, Kai and Eliashberg, Yakov},
  volume={59},
  year={2012},
  publisher={American Mathematical Soc.}
}

@article{ding2004surgery,
  title={Surgery diagrams for contact 3-manifolds},
  author={Ding, Fan and Geiges, Hansj{\"o}rg and Stipsicz, Andr{\'a}s I},
  journal={Turkish Journal of Mathematics},
  volume={28},
  number={1},
  pages={41--74},
  year={2004}
}

@article{dalton2021legendrian,
  title={{L}egendrian torus and cable links},
  author={Dalton, Jennifer and Etnyre, John B and Traynor, Lisa},
  journal={Journal of Symplectic Geometry },
  year={2024},
volume={22},
pages={11--108}
}

@book{doCarmo1992,
  author    = {Do Carmo, Manfredo Perdigão},
  title     = {Riemannian Geometry},
  publisher = {Birkhäuser},
  address   = {Boston},
  year      = {1992}
}

@book{eliashberg2002introduction,
  title={Introduction to the $ h $-principle},
  author={Eliashberg, Yakov and Mishachev, Nikolai M},
  number={48},
  year={2002},
  publisher={American Mathematical Soc.}
}

@inproceedings{eliashberg1992contact,
  title={Contact 3-manifolds twenty years since {J.} {M}artinet's work},
  author={Eliashberg, Yakov},
  booktitle={Annales de l'institut Fourier},
  volume={42},
  number={1-2},
  pages={165--192},
  year={1992}
}

@article{fuchs1995invariants,
  title={Invariants of {L}egendrian and transverse knots in the standard contact space},
  author={Fuchs, Dimitry and Tabachnikov, Serge},
  year={1995}
}

@book{geiges2008introduction,
  title={An {I}ntroduction to {C}ontact {T}opology},
  author={Geiges, Hansj{\"o}rg},
  volume={109},
  year={2008},
  publisher={Cambridge University Press}
}

@article{goryunov2001plane,
  title={Plane curves, wavefronts and {L}egendrian knots},
  author={Goryunov, VV},
  journal={Philosophical Transactions of the Royal Society of London. Series A: Mathematical, Physical and Engineering Sciences},
  volume={359},
  number={1784},
  pages={1497--1510},
  year={2001},
  publisher={The Royal Society}
}

@book{gromov1986partial,
  title={Partial {D}ifferential {R}elations},
  author={Gromov, Mikhael},
  volume={9},
  year={1986},
  publisher={Springer Science \& Business Media}
}

@article{hill1997vassiliev,
  title={Vassiliev-type invariants of planar fronts without dangerous self-tangencies},
  author={Hill, Jonathan William},
  journal={Comptes Rendus de l'Acad{\'e}mie des Sciences-Series I-Mathematics},
  volume={324},
  number={5},
  pages={537--542},
  year={1997},
  publisher={Elsevier}
}

@article{jordan2006generating,
  title={Generating family invariants for {L}egendrian links of unknots},
  author={Jordan, Jill and Traynor, Lisa},
  journal={Algebraic \& Geometric Topology},
  volume={6},
  number={2},
  pages={895--933},
  year={2006},
  publisher={Mathematical Sciences Publishers}
}

@article{chatterjee2025cable,
  title={Cable links of uniformly thick knot types},
  author={Chatterjee, Rima and Etnyre, John B and Min, Hyunki and Rodewald, Thomas},
  journal={preprint arXiv:2507.03185},
  year={2025}
}

@article{dymara2001legendrian,
  title={{L}egendrian knots in overtwisted contact structures on {$S^3$}},
  author={Dymara, Katarzyna},
  journal={Annals of Global Analysis and Geometry},
  volume={19},
  number={3},
  pages={293--305},
  year={2001},
  publisher={Springer}
}

@article{eliashberg1989classification,
  title={Classification of overtwisted contact structures on 3-manifolds},
  author={Eliashberg, Yakov},
  journal={Inventiones mathematicae},
  volume={98},
  number={3},
  pages={623--637},
  year={1989},
  publisher={Springer-Verlag Berlin/Heidelberg}
}

@article{eliashberg2009topologically,
  title={Topologically trivial {L}egendrian knots},
  author={Eliashberg, Yakov and Fraser, Maia},
  year={2009}
}

@incollection{etnyre2005legendrian,
  title={Legendrian and transversal knots},
  author={Etnyre, John B},
  booktitle={Handbook of {K}not {T}heory},
  pages={105--185},
  year={2005},
  publisher={Elsevier}
}

@article{kalfagianni1998finite,
  title={Finite type invariants for knots in 3-manifolds},
  author={Kalfagianni, Efstratia},
  journal={Topology},
  volume={37},
  number={3},
  pages={673--707},
  year={1998},
  publisher={Elsevier}
}

@article{ginzburg1988closed,
  title={On closed characteristics of 2-forms},
  author={Ginzburg, Vitalii Lazarevich},
  journal={Russian Mathematical Surveys},
  volume={43},
  number={5},
  pages={225},
  year={1988},
  publisher={IOP Publishing}
}

@article{Kirk-Livingston,
    author ={Kirk, Paul and Livingston, Charles},
    title ={Type 1 knot Invariants in $3$-manifolds} ,
    journal ={Pacific Journal of Mathematics} ,
    year = {1998},
    volume={183},
    number={2},
    pages={305-331}
}

@book{lutz1971quelques,
  title={Sur quelques propri{\'e}t{\'e}s des formes diff{\'e}rentielles en dimension trois},
  author={Lutz, Robert},
  year={1971},
  publisher={UER de Mathematique, Universite Louis Pasteur}
}

@inproceedings{martinet2006formes,
  title={Formes de contact sur les vari{\'e}t{\'e}s de dimension 3},
  author={Martinet, Jean},
  booktitle={Proceedings of Liverpool Singularities Symposium II},
  pages={142--163},
  year={2006},
  organization={Springer}
}

@article{tchernov2002arnold,
  title={Arnold-type invariants of wave fronts on surfaces},
  author={Tchernov, Vladimir},
  journal={Topology},
  volume={41},
  number={1},
  pages={1--45},
  year={2002},
  publisher={Elsevier}
}

@article{tchernov2003isomorphism,
  title={Isomorphism of the groups of {V}assiliev invariants of {L}egendrian and pseudo-{L}egendrian knots},
  author={Tchernov, Vladimir},
  journal={Compositio Mathematica},
  volume={135},
  number={1},
  pages={103--122},
  year={2003},
  publisher={London Mathematical Society}
}

@incollection{Vassiliev,
    author = {Vassiliev, V.},
    title ={Cohomology of knot spaces} ,
    booktitle = { Theory of Singularities and its Applications},
    publisher ={} ,
    year = {1990},
    month={12},
pages={23-70}
}
\end{document}